\theoremstyle:=definition,remark,plain\do{%
     \expandafter\g@addto@macro\csname th@\theoremstyle\endcsname{%
        \addtolength\thm@preskip\parskip
     }%
   }
\newtheorem{lem}{Lemma}[section]
\newtheorem{thm}[lem]{Theorem}
\newtheorem{prop}[lem]{Proposition}
\newtheorem{cor}[lem]{Corollary}
\newtheorem{remark}[lem]{Remark}
\theoremstyle{definition}
\newtheorem{defn}[lem]{Definition}
\DeclareMathOperator{\supp}{supp}
\DeclareMathOperator{\id}{id}
\DeclareMathOperator{\Int}{int}
\DeclareMathOperator{\Mas}{Mas}
\DeclareMathOperator{\cyl}{cyl}
\DeclareMathOperator{\CZ}{CZ}
\newcommand{\ip}[1]{\langle #1 \rangle}
\newcommand{\abs}[1]{\left| #1 \right|}
\newcommand{\norm}[1]{\left\| #1 \right\|}
\newcommand{\gen}[1]{\left\langle#1\right\rangle}
\newcommand{\ide}{\boldsymbol{\mathbbm{1}}}
\numberwithin{equation}{section}
\def\C{\mathbb{C}}
\def\R{\mathbb{R}}
\def\Z{\mathbb{Z}}
\def\bB{\mathbb{B}}
\def\bD{\mathbb{D}}
\def\cA{\mathcal{A}}
\def\cC{\mathcal{C}}
\def\cH{\mathcal{H}}
\def\cJ{\mathcal{J}}
\def\cL{\mathcal{L}}
\def\cM{\mathcal{M}}
\def\cN{\mathcal{N}}
\def\cO{\mathcal{O}}
\def\a{\alpha}
\def\g{\gamma}
\def\d{\partial}
\def\e{\epsilon}
\def\eps{\epsilon}
\def\i{\iota}
\def\l{\lambda}
\def\th{\theta}
\def\vp{\varphi}
\def\w{\omega}
\def\z{\zeta}
\def\vp{\varphi}
\def\Si{\Sigma}
\def\O{\Omega}
\def\Ham{\mbox{Ham}}
\begin{document}
\title{Bounding Lagrangian widths via geodesic paths}

\author{\textsc Matthew Strom Borman\thanks{Partially supported by NSF grants DMS-1006610 and DMS-1304252.}\, and Mark McLean\thanks{Partially supported by NSF grant DMS-1005365 and a grant to the Institute for Advanced Study by The Fund for Math.}}
\maketitle

\begin{abstract}
The width of a Lagrangian is the largest capacity of a ball that can be symplectically embedded into the ambient 
manifold such that 
the ball intersects the Lagrangian exactly along the real part of the ball.  Due to Dimitroglou Rizell, finite width is an obstruction to a Lagrangian admitting an exact Lagrangian cap in the sense of Eliashberg--Murphy.
In this paper we introduce a new method for bounding the width of a Lagrangian $Q$
by considering the Lagrangian Floer cohomology of an auxiliary Lagrangian 
$L$ with respect to a Hamiltonian whose chords correspond to geodesic paths in $Q$.
This is formalized as a wrapped version of the Floer--Hofer--Wysocki capacity 
and we establish an associated energy-capacity inequality with the help of a closed-open map.
For any orientable Lagrangian $Q$ admitting a metric of non-positive sectional curvature 
in a Liouville manifold, we show the width of $Q$ is bounded above by four times its displacement energy.
\end{abstract}

\setcounter{tocdepth}{2}
\tableofcontents


\section{Introduction}

\subsection{The width of a Lagrangian}

Given $Q^{n} \subset (M^{2n}, \w)$ a closed Lagrangian submanifold in a symplectic manifold we will
consider the following relative symplectic embedding problem first considered by Barraud--Cornea
\cite{BC06, BC07}.  If $\bB^{2n}_{R} = \{z \in \C^{n} : \pi\abs{z}^{2} \leq R\}$ denotes the ball of capacity $R$ in the standard 
$(\C^{n}, \w_{0} = dx \wedge dy)$, define a symplectic embedding 
$\iota: (\bB^{2n}_{R}, \w_{0}) \to (M^{2n}, \w)$ to be \textbf{relative} to $Q$ if 
$$
\iota^{-1}(Q) = \bB^{2n}_{R} \cap \R^{n}
$$ 
and define the \textbf{width} of the Lagrangian $Q$ to be
$$
	w(Q; M) := \sup \{R : \bB^{2n}_{R} \mbox{ embeds symplectically in $(M, \w)$ relative to $Q$}\}.
$$
Recall that a compact subset $X \subset (M, \w)$ of a symplectic manifold is \textbf{displaceable} if there is a 
compactly supported Hamiltonian diffeomorphism $\vp \in \Ham_{c}(M, \w)$ such that $\vp(X) \cap X = \emptyset$.
The displacement energy $e(X; M)$ is the least Hofer energy needed to displace $X$, the precise definition appears in 
Section~\ref{s:IntroECI}.

Previous methods for bounding Lagrangian widths, which we review in
Section~\ref{s:previous}, assumed $Q$ was monotone and used Lagrangian Floer cohomology $HF^{*}(Q)$ to prove
$Q$ was uniruled by holomorphic disks \cite{Al05, BC09, Ch10}.
In the non-monotone case, more refined methods have been suggested by Cornea--Lalonde \cite{CL05, CL06} and 
Fukaya \cite{Fu06}, though the analytic foundations of each approach remain to be completed.

\subsubsection{An overview of our method}\label{s:brief}

The focus of this paper will be the introduction of a new technique for bounding the width of a Lagrangian $Q$. 
In contrast to previous work we will not need to assume $Q$ is monotone and we will not use the sophisticated machinery 
behind other suggested approaches.  The main idea will be to consider Lagrangian Floer cohomology $HF^{*}(L;H_{Q})$,
generated by Hamiltonian chords for an auxiliary exact Lagrangian $L \subset (M, d\th)$ in a Liouville manifold, 
where the Hamiltonian $H_{Q}$
induces geodesic flow in a Weinstein neighborhood of $Q$.  With this set-up we are able to use 
Lagrangian Floer theory in its simplest form, the case of an exact Lagrangian, and the Hamiltonian $H_{Q}$ takes the role of
$Q$.  At a functional level we have replaced $CF^{*}(L, Q)$, with all of the potential complications inherent in Lagrangian Floer theory
in the general case, with the well-behaved object $HF^{*}(L;H_{Q})$.  

Given a relatively embedded ball $\bB^{2n}_{R}$ for $Q$, we will pick an auxiliary exact Lagrangian $L$ that agrees
with the imaginary axis in the ball and can be displaced from $Q$ by a Hamiltonian isotopy.  By construction
the center of the ball $q_{0} \in Q \cap L$ is a generator in the chain complex $CF^{*}(L;H_{Q})$ and the fact
$L$ can be displaced from $Q$ leads to the existence of a differential connecting a chord $x$ and the constant chord $q_{0}$.
In favorable cases, in particular if $x$ does not represent a geodesic loop based at $q_{0}$, from such differentials
we are able to extract a holomorphic curve in $\bB^{2n}_{R}$ whose energy gives bounds on the size of the ball.

The procedure of looking for chain level information in $CF^{*}(L; H)$, where $H$ is adapted to a compact subset $X \subset M$
is formalized as a wrapped version $c^{FHW}_{L}(X)$ of the Floer--Hofer--Wysocki capacity.
Via a closed-open map between Hamiltonian and Lagrangian Floer cohomology, we relate the wrapped version
to the standard Floer--Hofer--Wysocki capacity.  This leads to the energy-capacity inequality, which bounds
$c_{L}^{FHW}(X)$ by the displacement energy $e(X; M)$.  
Going back to the special case of $CF^{*}(L;H_{Q})$, the capacity
$c_{L}^{FHW}(Q)$ gives bounds on the energy of the holomorphic curve we construct, and hence by the energy-capacity inequality
we have a bound
on the size of the ball in terms of the displacement energy $e(Q; M)$.

\subsubsection{Bounds on Lagrangian widths}

With our method we get the following bound on the width of a displaceable Lagrangian $Q \subset (M, \w)$
in terms of the displacement energy $e(Q; M)$.

\begin{thm}\label{t:main}
	If $(M, \w)$ is a Liouville manifold and $Q \subset M$
	is a closed oriented Lagrangian that is displaceable, then
	\begin{equation}\label{e:mainbound}
		w(Q; M) \leq 4\, e(Q; M)
	\end{equation}
	provided $Q$ admits a Riemannian metric with non-positive sectional curvature.
\end{thm}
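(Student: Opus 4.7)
The plan is to convert a relative ball embedding $\iota\colon \bB^{2n}_R \hookrightarrow M$ with $R$ close to $w(Q;M)$ into chain-level data in the wrapped Floer complex $CF^*(L;H_Q)$ of an auxiliary exact Lagrangian $L$, and from that data extract a holomorphic curve in the ball whose symplectic area is sandwiched between a lower bound coming from the ball's geometry and an upper bound coming from the wrapped Floer--Hofer--Wysocki capacity. To begin, fix a Riemannian metric on $Q$ of non-positive sectional curvature and use it to choose a Weinstein neighborhood $U$ of $Q$ and an autonomous Hamiltonian $H_Q$ supported in $U$ whose flow reproduces geodesic flow on $T^*Q$ at the relevant energy levels. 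Then build an exact Lagrangian $L \subset (M, d\th)$ which coincides with $\iota(i\R^n \cap \bB^{2n}_R)$ inside the ball and continues outside a neighborhood of $Q$ as an exact Lagrangian that can be Hamiltonian-isotoped off of $Q$. The center $q_0 = \iota(0) \in Q \cap L$ is then a constant Hamiltonian chord of $H_Q$, and so gives a distinguished generator $[q_0] \in CF^*(L;H_Q)$.

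Next, use displaceability to force chain-level information. Because $L$ may be separated from $Q$, and so from the support of $H_Q$, by a compactly supported Hamiltonian isotopy, a standard continuation argument forces the relevant Floer cohomology to vanish, and so $[q_0]$ must be a coboundary. This produces a Floer strip $u$ from a non-constant Hamiltonian chord $x$ to $q_0$. The non-positive curvature hypothesis enters crucially here: by Cartan--Hadamard there are no non-constant contractible geodesic loops at $q_0$ in $Q$, so the chord $x$ either lies in a non-trivial free homotopy class or has controlled length. In either case its Maslov index is unambiguous and its action relative to $q_0$ is bounded above by $c^{FHW}_L(Q)$, which in turn is at most $e(Q;M)$ by the energy-capacity inequality established earlier in the paper.

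To close the argument one estimates the area of $u$ from below. The restriction of $u$ to $\iota^{-1}(\bB^{2n}_R)$ is a holomorphic curve with boundary partly on $\R^n \cap \bB^{2n}_R$ and partly on $i\R^n \cap \bB^{2n}_R$; an isoperimetric/monotonicity argument in $\bB^{2n}_R$, reducing to the two-dimensional split case, yields a lower bound of $R/4$, corresponding to the area of a quarter-disk in $\bB^{2}_R$. Combining this with the upper bound gives $R/4 \le e(Q;M)$, and letting $R \nearrow w(Q;M)$ yields the stated inequality $w(Q;M) \le 4\,e(Q;M)$.

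The main obstacle I expect is the coordinated construction of $L$ and $H_Q$: one has to make $L$ simultaneously standard inside the ball and globally displaceable from $Q$, while keeping $H_Q$ adapted to the geodesic flow on $Q$ so that the non-positive curvature hypothesis can be brought to bear. Arranging that the Floer strip $u$ produced from the vanishing argument actually crosses the ball in the geometrically useful way, so that both the capacity upper bound and the quarter-disk lower bound apply to the same curve, is the delicate heart of the plan; this is where all three ingredients (the geometry of $L$ near $q_0$, the dynamics of $H_Q$, and the curvature of $Q$) must be threaded together.
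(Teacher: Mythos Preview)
Your outline has the right architecture but there are two genuine gaps where the argument as written would not go through.

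First, the lower bound step is incorrectly set up. The Floer strip $u$ is \emph{not} holomorphic inside the Weinstein neighborhood $\cN$ of $Q$, since $X_{H_Q}\neq 0$ there; and its boundary lies only on $L=\iota(i\R^n)$, never on $Q=\iota(\R^n)$. So you cannot simply restrict $u$ to the ball, declare it a holomorphic curve with boundary on $\R^n\cup i\R^n$, and invoke a quarter-disk monotonicity bound. The paper's remedy is to look only at the portion of $u$ mapping to $M\setminus\cN$, which \emph{is} genuinely $J$-holomorphic, then let the Weinstein neighborhood shrink to $Q$ and pass to a limit using Fish's target-local Gromov compactness. The limiting curve acquires boundary on $Q$ from the collapsing $\partial\cN$, and only then do the two Schwarz reflections (across $i\R^n$ and across $\R^n$) produce a closed proper holomorphic curve through $0$ with energy $\leq 4\,e(Q;M)$. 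For this limit to be nontrivial one must know the holomorphic part of $u$ actually enters the ball away from $\cN$; this is exactly why the paper insists the chord $x$ be a \emph{path chord} (endpoints at distinct $q_i,q_j\in Q\cap L$), so that topologically $u(\cdot,0)$ or $u(\cdot,1)$ must traverse $L\cap\iota(\bB^{2n}_R)$ from $q_0$ out to the boundary sphere.

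Second, your use of non-positive curvature is not the right mechanism. Cartan--Hadamard rules out null-homotopic geodesic loops, but loop chords in the relevant sense correspond to geodesics from $q_i$ back to $q_i$ which are typically \emph{not} null-homotopic in $Q$ (think of a torus), so this does not exclude them. The paper's argument is more delicate: it splits chords into \emph{near} and \emph{far} types according to the sign of $f_H''$, introduces a Liouville-class filtration $\nu$ on $CF^*(L;H_Q)$ to iteratively trade far chords for chords with strictly larger $\nu$ until a near chord is reached, and then uses an index identity $\abs{x}_{\Mas}\equiv m_\Omega(q)\pmod 2$ for near loop chords. Non-positive curvature enters twice: it guarantees a unique geodesic in each homotopy class (so each $CF^*_{\nu,\gamma}$ has exactly two generators $x_n,x_f$ and the near-to-far differential is forced), and it forces $m_\Omega\equiv 0$, which is incompatible with $\abs{q_0}_{\Mas}=0$ and a degree-one differential. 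That is what finally rules out near loop chords and yields a path chord.
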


Since $(\C^{n}, \w_{0})$ is a Liouville manifold, see Section~\ref{s:Preliminaries} for the definition,
and any compact set in $\C^{n}$ is displaceable, by Theorem~\ref{t:main} the width of any
closed oriented Lagrangian in $\C^{n}$ is finite if it admits a metric of non-positive curvature.
The most basic examples are Lagrangian tori and in $\C^{2}$ these are the only orientable Lagrangians.
Recall by the Gromov--Lees theorem 
\cite{Gr71, Le76, ALP94} that $S^{1} \times L^{n-1}$ can be embedded as a Lagrangian in $\C^{n}$ whenever $TL \otimes \C$ is trivial.   Therefore other examples, for instance, are given by any Lagrangian of the form $Q = S^{1} \times L$ where $L$ is a closed orientable manifold admitting a metric of non-positive curvature with dimension at most $3$.  
Unfortunately we know of no example where the inequality \eqref{e:mainbound} is sharp.

In the context of the overview given above, the non-positive curvature assumption in Theorem~\ref{t:main} gives restrictions on the type of chords that can be connected to the center of the ball via a differential and serves to ensure we can extract a non-trivial holomorphic curve in the ball.  It is likely this assumption can be weakened to the existence of a metric on $Q$ such that
\begin{equation}\label{e:newAssumption}
	\mbox{$m_{\O}(q) \not= 1 - \mu_{Q}(v)$}
\end{equation}
for all based geodesic loops $q: [0,1] \to Q$ and maps $v \in \pi_{2}(M, Q)$.
Here $m_{\O}(q)$ is the Morse index of the geodesic and $\mu_{Q}(v)$ is the Maslov index.  
It is only the proof of Lemma~\ref{l:lowerorderchord} that kept us from using this weaker assumption.

Note that the assumption \eqref{e:newAssumption} is weaker than $g$ having non-positive section curvature,
since $\mu_{Q} \in 2\Z$ whenever $Q$ is orientable and non-positive curvature implies $m_{\O} \equiv 0$.	
With more work it is conceivable that this 
new assumption \eqref{e:newAssumption} could be weakened further by requiring $v$ to be a holomorphic
disk such that $v(\d\bD)$ is homotopic in $Q$ to $q$.  This will require controlling the limits of differentials
connecting a constant chord $q_{0} \in Q \cap L$ to a chord representing a geodesic loop based $q_{0}$. 

\subsubsection{Finite width as an obstruction to flexibility}

A closed Lagrangian $Q \subset (M, d\th)$ is said to \textbf{admit an exact Lagrangian cap} if there is a Liouville subdomain
$(W, d\th|_{W}) \subset (M, d\th)$ such that $Q \backslash \Int W$ is a non-empty exact Lagrangian and
$\th|_{Q \backslash \Int W}$ admits a primitive vanishing on its boundary $Q \cap \d W$. 
In \cite{Di13} Dimitroglou Rizell made the following fantastic observation.
\begin{thm}[\cite{Di13}]
	If a closed Lagrangian $Q \subset (M, d\th)$ admits an exact Lagrangian cap, then $Q \subset M$ has infinite width.
\end{thm}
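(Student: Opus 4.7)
The plan is, for an arbitrary $R > 0$, to produce a symplectic embedding $\iota: (\bB^{2n}_R, \w_0) \hookrightarrow (M, d\th)$ with $\iota^{-1}(Q) = \bB^{2n}_R \cap \R^n$. Following Dimitroglou Rizell, the argument would exploit the h-principle flexibility that the exact Lagrangian cap affords, reducing the construction of a large relative ball to a Hamiltonian-isotopy-level question about $Q$.

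First, observe that width is invariant under ambient Hamiltonian isotopy: for $\vp \in \Ham(M, d\th)$ and any relative embedding $\iota$ for $Q$, the composition $\vp \circ \iota$ is a relative embedding for $\vp(Q)$, so $w(\vp(Q); M) = w(Q; M)$. Hence it suffices, after a compactly supported Hamiltonian isotopy of $Q$, to exhibit a standard Lagrangian $n$-disk of radius $\sqrt{R/\pi}$ sitting in $Q$ as the real locus of an embedded Darboux chart of capacity $\geq R$; the restriction of the Darboux chart to the ball then provides the desired relative embedding.

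To manufacture such a disk I would exploit the exact cap $Q \setminus \Int W$, whose boundary is Legendrian in $\d W$ and whose primitive of $\th$ vanishes there. These boundary conditions are precisely what one needs, possibly after a preliminary stabilization of the Legendrian in the complement of $W$, for Murphy's looseness criterion. Once looseness is in place, the Eliashberg--Murphy h-principle for Lagrangians with loose Legendrian ends allows one to realize any formal Lagrangian isotopy of $Q$ supported near the cap by a genuine ambient Hamiltonian isotopy of $M$. A formal isotopy stretching a small disk of $Q$ into a standard Lagrangian disk of radius $\sqrt{R/\pi}$ inside a Darboux chart is then easy to prescribe, for instance by combining the Liouville flow in the completion $\wh W$ with a cutoff; realizing it as an honest compactly supported Hamiltonian isotopy of $(M, d\th)$ yields the sought standard disk inside $Q$.

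The principal obstacle is the careful invocation of the h-principle: one must verify that the loose Legendrian hypothesis is (or can be arranged to be) satisfied at the boundary of the cap, and that the stretching isotopy can be promoted to a compactly supported Hamiltonian isotopy of $M$ reaching the desired size. The conceptual point, however, is that the exact cap supplies the formal flexibility needed for h-principle arguments, while the ambient Liouville subdomain $W$ supplies the symplectic room to accommodate Darboux balls of arbitrary capacity---together obstructing any finite bound on $w(Q;M)$.
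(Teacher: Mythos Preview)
The paper does not prove this theorem; it is quoted from \cite{Di13}. Your proposal can nonetheless be assessed, and it has a genuine gap.

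The Eliashberg--Murphy h-principle you invoke requires the Legendrian $\Lambda = Q \cap \d W$ to be \emph{loose}. The hypothesis that $Q$ admits an exact Lagrangian cap makes no such assumption: it only asks that $\th|_{Q \setminus \Int W}$ be exact with a primitive vanishing on $\Lambda$. Many Legendrians are not loose (for instance any $\Lambda$ with nonvanishing Legendrian contact homology), and your proposed ``preliminary stabilization'' cannot be realized by an ambient Hamiltonian isotopy of $Q$, since a stabilized Legendrian is never Legendrian-isotopic to the original; performing it would change the Lagrangian whose width is in question. You have in effect inverted the logic of \cite{EEMS13, EM13}: there looseness is an \emph{input} used to \emph{construct} exact caps, whereas Dimitroglou Rizell's theorem draws a conclusion from the mere existence of an exact cap, loose boundary or not.

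Dimitroglou Rizell's argument is elementary and uses no h-principle. After arranging that the cap $C = Q \setminus \Int W$ is cylindrical near $\d W$, extend the primitive $k$ of $\th|_C$ to a compactly supported $\wt k:M\to\R$ vanishing on $W$ and set $\th' = \th - d\wt k$. Then $\th'|_C = 0$, so the Liouville field $Z'$ of $\th'$ is tangent to $C$; one still has $Z' = Z_\th$ on $W$, and $Z'$ is complete. Fix a small relative ball $\iota:\bB^{2n}_\eps \to M\setminus\overline W$ centered at a point of $\Int C$. For each $T>0$ the map
\[
\Psi_T \;=\; \vp_{Z'}^{\,T} \circ \iota \circ \big(z\mapsto e^{-T/2}z\big)\;:\;\bB^{2n}_{e^T\eps}\longrightarrow M
\]
is a symplectic embedding, since $(\vp_{Z'}^{\,T})^*\w = e^T\w$. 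It is relative to $Q$: tangency of $Z'$ to $C$ together with $Z'$ pointing into $C$ along $\d C$ forces the forward $Z'$-flow to preserve $C$, while the backward $Z'$-flow of $Q\cap W$ remains in $W$ (because $Z'=Z_\th$ there and $W$ is a Liouville subdomain) and hence never meets $\iota(\bB^{2n}_\eps)$. Thus $\Psi_T^{-1}(Q)=\bB^{2n}_{e^T\eps}\cap\R^n$, giving $w(Q;M)\geq e^T\eps$ for every $T$.
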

For the case of $M = \C^{n}$ and $W = \bB^{2n}$ such Lagrangians were built by Ekholm--Eliashberg--Murphy--Smith
\cite{EEMS13} when $n \geq 3$.
The construction of these Lagrangians used Murphy's \cite{Mu12} h-principle for loose Legendrians
and its extension to an h-principle for exact Lagrangian caps in 
$(\C^{n}\backslash \bB^{2n}, \d\bB^{2n})$ by Eliashberg--Murphy \cite{EM13}. 

Let us point out that the known examples of oriented Lagrangians in $\C^{n}$ with infinite width seem to fit with our method's potential
extension to the requirement in \eqref{e:newAssumption}.  For example, \cite{EEMS13} built 
Lagrangian $S^{1} \times S^{2k} \subset \C^{2k+1}$ that admit an exact Lagrangian cap and
have a holomorphic disk with Maslov index $2-2k$.  Condition \eqref{e:newAssumption} fails for these Lagrangians since
they must have a based geodesic with Morse index $2k-1$.

Since finite width is an obstruction to admitting an exact Lagrangian cap, we have the following corollary of Theorem~\ref{t:main}
\begin{cor}\label{c:NoCaps}
	A closed orientable displaceable Lagrangian $Q \subset (M, d\th)$ does not admit an exact Lagrangian cap if the manifold $Q$
	admits a metric with non-positive sectional curvature.
\end{cor}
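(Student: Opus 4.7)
The plan is to deduce the corollary directly from Theorem~\ref{t:main} and the obstruction theorem of Dimitroglou Rizell quoted just above. Both results are stated in the Liouville setting $(M, d\th)$, which is also the setting in which exact Lagrangian caps are defined, so the hypotheses of the two inputs line up without any massaging.

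First I would observe that displaceability of $Q$ already forces $e(Q;M)$ to be finite: by definition, any $\vp \in \Ham_{c}(M, d\th)$ with $\vp(Q) \cap Q = \emptyset$ has finite Hofer norm, and $e(Q;M)$ is the infimum of such norms over displacing diffeomorphisms. Next I would apply Theorem~\ref{t:main} to $Q$, whose hypotheses match verbatim (closed, oriented, displaceable, admits a metric of non-positive sectional curvature in a Liouville manifold), to conclude
$$
	w(Q; M) \;\leq\; 4\, e(Q; M) \;<\; \infty.
$$

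Finally I would argue by contradiction. Suppose $Q$ admitted an exact Lagrangian cap. Then the theorem of Dimitroglou Rizell stated just above Corollary~\ref{c:NoCaps} would yield $w(Q;M) = \infty$, directly contradicting the finiteness derived from Theorem~\ref{t:main}. Hence no such cap can exist, which is the desired statement.

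There is no real obstacle to this argument once the two input theorems are in hand; all the substance is concentrated in the proof of Theorem~\ref{t:main}, which is the main technical contribution of the paper. The corollary is essentially a repackaging, emphasizing that the width bound obstructs the flexible construction of Lagrangians (for example, the caps produced via the Eliashberg--Murphy h-principle in \cite{EEMS13,EM13}) whenever the closed Lagrangian in question is orientable, displaceable, and carries a non-positively curved metric.
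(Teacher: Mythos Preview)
Your proposal is correct and matches the paper's own reasoning exactly: the paper presents Corollary~\ref{c:NoCaps} as an immediate consequence of Theorem~\ref{t:main} (finite width) together with Dimitroglou Rizell's theorem (a cap forces infinite width), without writing out a separate proof.
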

In \cite[Theorem 1.7]{Ch12}, Chantraine gave an example of a Lagrangian torus $T \subset \C^{2}$
such that $T \backslash \Int \bB^{4}_{R}$ is never an 
exact Lagrangian cap, though this terminology was not used. Corollary~\ref{c:NoCaps} shows that this is a much more 
general phenomenon.

\subsubsection{Previous width bounds via uniruling by $J$-holomorphic curves}\label{s:previous}

Given an almost complex structure $J$ on $(M, \w)$ a $J$-holomorphic curve is a smooth
function $u: (S, j) \to (M, J)$ from a Riemann surface $S$ to $M$ that satisfies 
the Cauchy-Riemann equation $du \circ j = J \circ du$.  Building on Gromov's \cite{Gr85} proof
of absolute packing obstructions, so far all non-trivial upper bounds for the width of a Lagrangian have 
gone through uniruling results for the Lagrangian by holomorphic curves via the following lemma.

\begin{lem}\label{l:uniruling}
Let $Q \subset (M, \w)$ be a closed Lagrangian. Suppose there is a constant $A \geq 0$
so that for all points $q \in Q$ and compatible almost complex structures $J$ on $(M, \w)$,
there is a non-constant $J$-holomorphic curve $u:(\Si, \d\Si) \to (M, Q)$ with $q \in u(\d\Si)$ and 
$\int_{\Si} u^{*}\w  \leq A$.  Then $w(Q; M) \leq 2A$.
\end{lem}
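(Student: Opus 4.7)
The plan is to produce, through the center of a relatively embedded ball, a piece of a $J$-holomorphic curve whose symplectic area is bounded below in terms of the capacity $R$ of the ball, and then to compare this lower bound to the uniform upper bound $A$ furnished by the hypothesis.

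To set up, let $R < w(Q;M)$ and fix a relative symplectic embedding $\iota\co (\bB^{2n}_R,\w_0)\hookrightarrow (M,\w)$ so that $\iota^{-1}(Q) = \bB^{2n}_R \cap \R^n$.  I would push forward the standard complex structure $J_0$ on $\C^n$ to a compatible almost complex structure on $\iota(\bB^{2n}_R)$ and extend it to a compatible almost complex structure $J$ on all of $(M,\w)$; this is possible since the space of $\w$-compatible almost complex structures extending a given compatible structure on a closed subset is nonempty and contractible.  Applying the uniruling hypothesis at $q_0 := \iota(0) \in Q$ produces a non-constant $J$-holomorphic curve $u\co (\Si,\d\Si)\to (M,Q)$ with $q_0 \in u(\d\Si)$ and $\int_\Si u^*\w \leq A$.

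Next I would localize $u$ to the ball.  Pick $z_0 \in \d\Si$ with $u(z_0)=q_0$ and let $\Si'$ be the connected component of $u^{-1}(\iota(\bB^{2n}_R))$ containing $z_0$.  Setting $v := \iota^{-1}\circ u|_{\Si'}$, the map $v\co \Si'\to \bB^{2n}_R$ is a $J_0$-holomorphic curve passing through the origin, whose boundary decomposes as a piece lying on the Lagrangian disk $\bB^{2n}_R\cap \R^n$ (inherited from $\d\Si$) together with a piece lying on the round sphere $\d\bB^{2n}_R$.  By construction,
\[
\int_{\Si'} v^*\w_0 \;=\; \int_{\Si'} u^*\w \;\leq\; \int_{\Si} u^*\w \;\leq\; A.
\]

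The main step, and the chief technical point, is to bound $\int_{\Si'} v^*\w_0$ from below by $R/2$.  Since $J_0$ is invariant under complex conjugation and $\R^n \subset \C^n$ is totally real, I would apply Schwarz reflection along the Lagrangian portion of $\d \Si'$ to double $v$ into a $J_0$-holomorphic curve $\tilde v$ in $\bB^{2n}_R$ passing through the origin and with boundary contained in $\d\bB^{2n}_R$.  The classical monotonicity inequality for holomorphic curves through the center of a Euclidean ball then gives $\int \tilde v^*\w_0 \geq R$, whence $\int_{\Si'} v^*\w_0 \geq R/2$.  Combined with the previous display this forces $R \leq 2A$, and letting $R \nearrow w(Q;M)$ yields the stated bound.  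The subtleties to be handled are the smoothness of the doubled curve at the corners where the Lagrangian and spherical portions of $\d\Si'$ meet, together with a possible need to shrink to a slightly smaller ball $\bB^{2n}_{R-\eps}$ before applying monotonicity and then let $\eps\to 0$; both issues are standard in the Gromov-type packing arguments of \cite{Gr85}, and can alternatively be bypassed by invoking a Lagrangian-boundary form of the monotonicity inequality directly.
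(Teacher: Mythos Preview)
Your argument is correct and follows essentially the same route as the paper's sketch: choose the center of a relatively embedded ball as the point $q_0$, take $J$ standard in the ball, restrict the given $J$-holomorphic curve to the ball, apply Schwarz reflection across $\R^n$, and invoke the monotonicity estimate to obtain $R \leq 2A$. Your handling of the technical caveats (corner smoothness, shrinking to $\bB^{2n}_{R-\eps}$) is appropriate and consistent with the standard treatment cited there.
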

The proof, see for instance \cite[Corollary 3.10]{BC07}, 
goes by taking a symplectic embedding $\iota: \bB^{2n}_{R} \to (M, \w)$ relative to $Q$ and picking 
$q = \iota(0)$ and a compatible almost complex structure so that $\iota^{*}J = J_{0}$ is the standard complex structure
on $\C^{n}$.  After applying Schwarz reflection across $\R^{n}$ to the part of the $J$-holomorphic curve $u$ that is in the ball 
$\bB^{2n}_{R}$, the standard monotonicity estimate gives $R \leq 2\int_{\Si} u^{*}\w$ and hence $w(Q; M) \leq 2A$.

To date the main approaches to proving such uniruling results have involved using a flavor of Lagrangian Floer cohomology for $Q$ and hence work best when $Q$ is monotone, i.e.\ 
symplectic area $\w: \pi_{2}(M, Q) \to \R$ and the Maslov index $\mu_{Q}: \pi_{2}(M, Q) \to \Z$
are proportional $\w = \l\, \mu_{Q}$ for some $\l \geq 0$.
When $Q$ is displaceable, then using 
$HF^{*}(Q) = 0$ and action considerations one gets uniruling results for $Q$ with disks of area 
at most $e(Q; M)$.  Hence for displaceable monotone Lagrangians one has
\begin{equation}\label{e:conj}
	w(Q; M) \leq 2\,e(Q; M)\,.
\end{equation}
This is the route taken by Albers \cite{Al05} and Biran--Cornea \cite{BC09}, see also \cite{Da12, EK11}.
In \cite{Ch10, Ch14} Charette proved a stronger form of uniruling in the monotone case, which was conjectured
by Barraud--Cornea \cite[Conjecture 3.15]{BC06}.
When $Q$ is non-displaceable Biran--Cornea \cite{BC09} used the ring structure of $HF^{*}(Q)$ to detect uniruling for holomorphic disks of a given Maslov index, which due to monotonicity give area bounds.

\subsection{The Floer--Hofer--Wysocki capacity and its relative version}

In \cite{FHW94} Floer--Hofer--Wysocki introduced a capacity for open subsets of $\C^{n}$ using
a symplectic homology \cite{FH94} construction.  For sample applications see \cite{FHW94, CFHW96, He00, He04, Dr08, Ir12}.
In this paper we will utilize a modified version of the Floer--Hofer--Wysocki capacity
and we will introduce the analogous Lagrangian version, which is defined via a wrapped Floer cohomology construction 
\cite{AbS10}.  These capacities are related via a closed-open map between Hamiltonian Floer cohomology and Lagrangian Floer cohomology and have energy-capacity inequalities, which are established in Theorem~\ref{t:CapIneq}.

\subsubsection{The definitions of the capacities}

Here we will give a brief descriptions of the Floer--Hofer--Wysocki capacities,
see Sections~\ref{ss:LFC} and \ref{s:DefFHC} for a more thorough description as well as our Floer theory conventions.

Given a Liouville manifold $(M^{2n}, d\th)$ and a compact subset $X \subset M$, consider the set of Hamiltonians
$$
	\cH^{X} = \{ H \in C^{\infty}(S^{1}\times M) : H|_{S^{1} \times X} < 0 \mbox{ and supp$(dH)$ is compact}\}.
$$
Using filtered Hamiltonian Floer cohomology, for $a > 0$ one sets
$$
	HF^{*}(X, a):= \varinjlim_{H \in \cH^{X}} HF^{*}_{(-a, 0]}(H)
$$
where the direct limit is given by monotone continuation maps 
$HF^{*}_{(-a, 0]}(H_{0}) \to HF^{*}_{(-a, 0]}(H_{1})$
that exist whenever $H_{0} \leq H_{1}$.  
The \textbf{Floer--Hofer--Wysocki capacity} of $X$ is
$$
	c^{FHW}(X) = \inf\{ a > 0 : i_{X}^{a}(\ide_{M}) = 0\}
$$
where $i_{X}^{a}: H^{*}(M) \to HF^{*}(X, a)$ is a natural map described in Section~\ref{s:DefFHC}.
If $i_{X}^{a}(\ide_{M})$ is never zero, then $c^{FHW}(X) := + \infty$

Suppose one has an exact Lagrangian $L \subset M$,
then one can repeat the construction of the Floer--Hofer--Wysocki capacity in the 
filtered Lagrangian Floer cohomology setting.
In particular for $a > 0$ one sets
$$
	HF^{*}(L; X, a):= \varinjlim_{H \in \cH^{X}} HF^{*}_{(-a, 0]}(L; H)
$$
and defines the \textbf{Lagrangian Floer--Hofer--Wysocki capacity} (relative to $L$) of $X$ as
$$
	c^{FHW}_{L}(X) = \inf\{ a > 0 : i_{L;X}^{a}(\ide_{L}) = 0\}
$$
where $i_{L;X}^{a}: H^{*}(L) \to HF^{*}(L;X, a)$ is a natural map described in Section~\ref{s:LAS}.

\subsubsection{The comparison and energy-capacity inequalities}\label{s:IntroECI}

Recall that the \textbf{displacement energy} $e(X;M)$ of a closed set $X \subset (M, \w)$ is
\begin{equation}\label{e:dis}
	e(X;M) = \inf\{ \norm{H} : H \in C^{\infty}_{c}(S^{1} \times M) \mbox{ and }
	\vp_{H}^{1}(X) \cap X = \emptyset\}
\end{equation}
where $\norm{H} = \max_{t \in S^{1}}(\max_{M} H_{t} - \min_{M} H_{t})$ and
$\vp_{H}^{1} \in \Ham_{c}(M,\w)$ is the time-one map generated by 
the time-dependent Hamiltonian vector field $X_{H_{t}}$ given by
$$
	-dH_{t} = \w(X_{H_{t}}, \cdot) \quad\mbox{where $H_{t}(m) = H(t,m)$ for $t \in S^{1} = \R/\Z$}.
$$
See \cite[Lemma 5.1.C]{Po01} for the proof that this definition of displacement energy is equivalent to the one using Hofer's metric 
\cite{Ho90}.
The \textbf{relative displacement energy} $e_{L}(X; M)$ is 
\begin{equation}\label{e:disL}
	e_{L}(X; M) := \inf\{ \norm{H}_{L} : H \in C^{\infty}_{c}(S^{1} \times M) 
	\mbox{ and } \vp_{H}^{1}(L) \cap X = \emptyset\}
\end{equation}
where $\norm{H}_{L} = \max_{t \in S^{1}} (\max_{L} H_{t} - \min_{L} H_{t})$. 

The Floer--Hofer--Wysocki capacities have the following inequalities, which we prove in Section~\ref{s:EAILP}.
See Definition~\ref{d:admissibleLag} for the definition of an admissible Lagrangian.
\begin{thm}\label{t:CapIneq}
	Let $X \subset M$ be a compact set and $L \subset M$ be an admissible Lagrangian.
	\begin{enumerate}
	\item[(i)] The Hamiltonian capacity bounds the Lagrangian capacity:
	 $c^{FHW}_{L}(X) \leq c^{FHW}(X).$
	\item[(ii)] Energy-capacity inequalities:
	$c^{FHW}_{L}(X) \leq e_{L}(X; M)$ and $c^{FHW}(X) \leq e(X; M).$
	\end{enumerate}
\end{thm}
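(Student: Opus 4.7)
The plan is to deduce (i) from a natural closed--open transformation and to deduce (ii) from the standard Floer-theoretic displacement argument adapted to the wrapped setting.

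For (i), I would construct, for each $H \in \cH^{X}$, a closed--open chain map
\[
\Phi \colon CF^{*}_{(-a, 0]}(H) \to CF^{*}_{(-a, 0]}(L; H)
\]
defined by counting pseudo-holomorphic disks with one interior puncture asymptotic to a $1$-periodic orbit of $H$ and one boundary puncture asymptotic to a Hamiltonian chord of $H$ with endpoints on $L$, with Floer data interpolating between the closed and open settings. Standard arguments show $\Phi$ respects the action filtration, commutes with continuation in $H$, and intertwines the PSS-style unit maps used to define $i_{X}^{a}$ and $i_{L;X}^{a}$. Passing to the direct limit over $\cH^{X}$ yields a commutative square
\[
\begin{array}{ccc}
H^{*}(M) & \xrightarrow{\;i_{X}^{a}\;} & HF^{*}(X, a) \\
\big\downarrow & & \big\downarrow \Phi \\
H^{*}(L) & \xrightarrow{\;i_{L;X}^{a}\;} & HF^{*}(L;X, a)
\end{array}
\]
whose left vertical is restriction $\ide_{M} \mapsto \ide_{L}$. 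Hence $i_{X}^{a}(\ide_{M}) = 0$ forces $i_{L;X}^{a}(\ide_{L}) = 0$, and taking infima gives $c^{FHW}_{L}(X) \leq c^{FHW}(X)$.

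For (ii), I treat the Lagrangian case; the Hamiltonian case is parallel. Fix $\e > 0$ and choose $F \in C^{\infty}_{c}(S^{1}\times M)$ with $\vp_{F}^{1}(L) \cap X = \emptyset$ and $\norm{F}_{L} < e_{L}(X; M) + \e$. For $H \in \cH^{X}$ that is linear of admissible slope on the cylindrical end, I would concatenate $H$ in time with a time-rescaled copy of $F$ to form $\ov{H}$. Since $\vp_{F}^{1}$ separates $L$ from $X$, every chord of $\ov{H}$ that visits the region where $H < 0$ acquires action strictly greater than $-\norm{F}_{L}$, so no such chords lie in the action window $(-a, 0]$ once $a > \norm{F}_{L}$. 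The standard action estimate for continuations then shows that the continuation map $HF^{*}_{(-a, 0]}(L; H) \to HF^{*}_{(-a, 0]}(L; \ov{H})$ sends the image of $\ide_{L}$ to zero. Passing to the direct limit over $\cH^{X}$ and letting $\e \to 0$ gives $c^{FHW}_{L}(X) \leq e_{L}(X; M)$. Replacing chords with $1$-periodic orbits and $L$ with $M$ yields the Hamiltonian inequality.

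The main obstacle is technical: to make all of the continuation maps, the PSS maps, and the closed--open map $\Phi$ simultaneously well-defined and filtered, one must work in a class of admissible Hamiltonians cofinal in $\cH^{X}$ whose behavior on the cylindrical end of $M$ is controlled (linear of fixed slope, with homotopy data that only changes the slope in a second step), so that the maximum principle confines all Floer cylinders, strips, and disks to a compact region and the relevant action estimates are uniform. Once this cofinal class and its homotopy data are in place the displacement and closed--open arguments reduce to their classical forms; it is the bookkeeping required to show that $\Phi$ descends to the direct limit compatibly with the units and the continuations where I expect the real technical work to lie.
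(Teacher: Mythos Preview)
Part (i) of your proposal is essentially the paper's argument: both build the closed--open map, verify it commutes with the unit maps $i^{a}_{X}$, $i^{a}_{L;X}$ and with restriction $H^{*}(M)\to H^{*}(L)$, and conclude by chasing $\ide_{M}\mapsto\ide_{L}$.

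Part (ii) has a genuine gap. Your key sentence is logically backwards: if a chord has action strictly greater than $-\norm{F}_{L}$ and $a>\norm{F}_{L}$, then $-a<-\norm{F}_{L}$ and the chord \emph{does} lie in $(-a,0]$ (provided its action is $\le 0$), so this does not empty the complex in that window. More fundamentally, the displacement hypothesis $\vp_{F}^{1}(L)\cap X=\emptyset$ alone does not control where the chords of the concatenation $\ov{H}$ sit or what their actions are: a chord of $\ov{H}$ corresponds to $p\in L$ with $\vp_{F}^{1}\vp_{H}^{1}(p)\in L$, and nothing you have written rules out such $p$ in $\{H<0\}$ or bounds the resulting action usefully.

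The paper supplies the missing mechanism via a spectral-invariance step (an observation of Ginzburg). One takes $G$ displacing a neighborhood $\cN\supset X$ with $\norm{G}<e_{0}$, chooses $H\in\cH^{X}$ equal to the constant $M_{H}>e_{0}$ outside $\cN$, and interpolates $K^{s}=(1-s)H+sM_{H}$. Because $\vp_{K^{s}}^{t}$ preserves $\cN$ and is the identity outside it while $\vp_{G}^{1}$ displaces $\cN$ (resp.\ displaces $L$ from $\cN$ in the Lagrangian case), the fixed points (resp.\ chords) of $K^{s}\#G$ and their actions are \emph{$s$-independent}. This produces a filtered isomorphism $HF^{*}_{(-e_{0}-\e,0]}(H\#G)\cong HF^{*}_{(-e_{0}-\e,0]}(M_{H}+G)$ compatible with the monotone continuation from a small $f-e_{0}$. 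Since $M_{H}+G>0$, that continuation factors through $HF^{*}_{(-e_{0}-\e,0]}(h)=0$ for an admissible $h>0$ with only constant orbits, hence vanishes; one further commutative square then forces $\Phi_{fH}=0$. Your $\ov{H}$ is the $s=0$ end of this family, but without the homotopy to $s=1$ and the accompanying action-spectrum invariance you have no reason the relevant Floer group vanishes in the given window.
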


The proof of part (i) is an immediate consequence of the existence of a closed-open map
$$
	\cC\cO: HF^{*}(X,a) \to HF^{*}(L;X,a)
$$
that is compatible with the maps $i_{X}^{a}$ and $i_{L;X}^{a}$.
The proof of part (ii) is based on an observation by Ginzburg from \cite{Gi10} and the now 
standard argument relating action and displacement energy.
 
While we do not use the relative Lagrangian energy-capacity inequality $c^{FHW}_{L}(X) \leq e_{L}(X; M)$ in this
paper, Humili\`{e}re--Leclercq--Seyfaddini \cite{HLS13} have recently
used such an inequality to prove $C^{0}$-rigidity for coisotropic submanifolds.
They use a version of the Hofer--Zehnder capacity developed by Lisi--Rieser \cite{LR13}.

To prove Theorem~\ref{t:main} we will only use the following immediate corollary of Theorem~\ref{t:CapIneq}:
\begin{cor}\label{c:Main}
	If $X \subset M$ is compact and $L \subset M$ is an admissible Lagrangian,
	then
	$$
		c^{FHW}_{L}(X) \leq e(X; M).
	$$
\end{cor}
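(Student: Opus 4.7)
The claim follows immediately by chaining the two halves of Theorem~\ref{t:CapIneq}. First I would invoke part (i), $c^{FHW}_{L}(X) \leq c^{FHW}(X)$, which comes from the closed--open map $\cC\cO\colon HF^{*}(X,a)\to HF^{*}(L;X,a)$ intertwining $i^{a}_{X}(\ide_{M})$ with $i^{a}_{L;X}(\ide_{L})$; hence any $a>0$ at which $i^{a}_{X}(\ide_{M})$ vanishes also kills $i^{a}_{L;X}(\ide_{L})$, which forces the ordering of the infima defining the two capacities. Next I would apply the Hamiltonian energy--capacity inequality from part (ii), $c^{FHW}(X)\leq e(X;M)$. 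Concatenating these gives
\[
  c^{FHW}_{L}(X)\;\leq\;c^{FHW}(X)\;\leq\;e(X;M),
\]
which is exactly the assertion. There is no genuine obstacle, since both ingredients are supplied by Theorem~\ref{t:CapIneq}.

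The only subtlety is deciding which of the two bounds in part (ii) to use. One could instead quote the relative inequality $c^{FHW}_{L}(X)\leq e_{L}(X;M)$ directly, but in the intended application to Theorem~\ref{t:main} the set $X$ will be the Lagrangian $Q$ itself and the auxiliary Lagrangian $L$ is merely a probe in the Floer-theoretic construction; one therefore wants the bound expressed in terms of the ordinary displacement energy $e(Q;M)$ rather than the energy $e_{L}(Q;M)$ required to push $L$ off $Q$. Factoring through the Hamiltonian capacity $c^{FHW}(X)$ achieves this, possibly at the expense of a weaker constant, and yields the form of the estimate actually needed downstream.
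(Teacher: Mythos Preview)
Your proof is correct and matches the paper's approach exactly: the paper states this as an ``immediate corollary'' of Theorem~\ref{t:CapIneq}, and the intended argument is precisely the concatenation $c^{FHW}_{L}(X) \leq c^{FHW}(X) \leq e(X;M)$ via parts (i) and (ii). Your observation about factoring through $c^{FHW}$ rather than using $e_{L}$ directly is also on point---the paper remarks that a direct proof avoiding the Hamiltonian capacity would be interesting, confirming that this route is the one taken.
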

It would be interesting to see a direct proof of Corollary~\ref{c:Main} that does not go through the Hamiltonian 
Floer--Hofer--Wysocki capacity $c^{FHW}$.

\subsection{An overview of the proof of the main result}\label{s:overviewProof}

Since the proof of Theorem~\ref{t:main} comprises the bulk of the paper, we will now give an in-depth overview 
of how we set up the argument and use Corollary~\ref{c:Main} to extract the needed holomorphic curve
used to establish the bound in Theorem~\ref{t:main}.

\subsubsection{An auxiliary Lagrangian and the Hamiltonian $H_{Q}$} \label{ss:auxlag}
Let us fix a symplectic embedding relative to the Lagrangian $Q$
$$\i: \bB^{2n}_{R} \to (M^{2n}, d\th)$$
and in Lemma~\ref{l:L} we will introduce an auxiliary Lagrangian $L \subset (M, d\th)$ such that:
\begin{itemize}
\item[(i)] $L$ is exact, diffeomorphic to $\R^{n}$, is properly embedded in $M$, and displaceable
from $Q$.
\item[(ii)] In a small Weinstein neighborhood $\cN$ of $Q$ the Lagrangian $L$ is modeled on cotangent fibers
$T^{*}_{q}Q$ for the points $q \in Q \cap L$.
\item[(iii)] $L$ intersects the ball only along the imaginary axis, i.e.\ $\i^{-1}(L) = i\R^{n} \cap \bB^{2n}_{R}$.
\end{itemize}

We will study the Lagrangian Floer--Hofer--Wysocki capacity $c^{FHW}_{L}(Q)$ using the following 
class of functions in $\cH^{Q}$.
Given a metric $g$ on $Q$ we will take Hamiltonians $H_{Q}: M \to \R$ in $\cH^{Q}$
such that $dH_{Q}$ is supported in a small Weinstein neighborhood $\cN$ of $Q$ and 
$$
\mbox{$H_{Q}(q,p) = f_{H_{Q}}(\abs{p}_{g})$}
$$ 
in cotangent bundle $T^{*}Q$ coordinates $(q,p)$ in $\cN$.
See Figure~\ref{f:cHcN} and Section~\ref{s:cHcN} for a precise description of the Hamiltonians we use.
For our choice of $H_{Q}$ and $L$ the non-constant Hamiltonian chords of $L$
correspond to geodesic paths in $Q$ starting and ending at points in $Q \cap L$. 

\begin{remark}
As the proof of Lemma~\ref{l:L} will show, a Lagrangian $L$ with properties (i) and (ii) can be built if 
$(M, \w)$ is the completion of a compact symplectic manifold with a contact type convex boundary.  For property (iii) we use the global Liouville flow on $(M, d\th)$ and this is the main point in the paper where we use the global Liouville structure in an essential way.
\end{remark}

\subsubsection{Using the energy-capacity inequality}

The Lagrangian Floer--Hofer--Wysocki capacity $c^{FHW}_{L}(Q)$ is defined using
Lagrangian Floer cohomology $HF^{*}(L; H_{Q})$, which is generated on the chain level by Hamiltonian chords 
$$
x: [0, 1] \to M \quad\mbox{with $\dot{x}(t) = X_{H_{Q}}(x(t))$ and $x(0), x(1) \in L$}.
$$
Now by Corollary~\ref{c:Main} we have 
$$
c^{FHW}_{L}(Q) \leq e(Q; M)
$$
and we have set up the Lagrangian capacity so that this implies the following:
There is a Hamiltonian chord $x$ for $H_{Q}$ corresponding to a geodesic in $Q$ and a solution to the Floer equation
\begin{equation}\label{e:FEI}
	\begin{cases}
	u = u(s,t): \R \times [0,1] \to M\\
	\d_{s}u + J(u)(\d_{t}u - X_{H_{Q}}(u)) = 0\\
	u(\R \times \{0,1\}) \subset L
	\end{cases}
\end{equation}
with bounded energy
$$E(u) := \int \norm{\d_{s}u}^{2}_{J} ds\,dt \leq e(Q; M)$$
so that $u(-\infty, t) = q_{0}$ and $u(+\infty, t) = x(t)$.
Here $q_{0} = \i(0) \in Q \cap L$, the center of the ball, is
 a constant chord since $dH_{Q} = 0$ on $Q$.
 
\begin{figure}[h]
  \centering
  \def\svgwidth{400pt}
  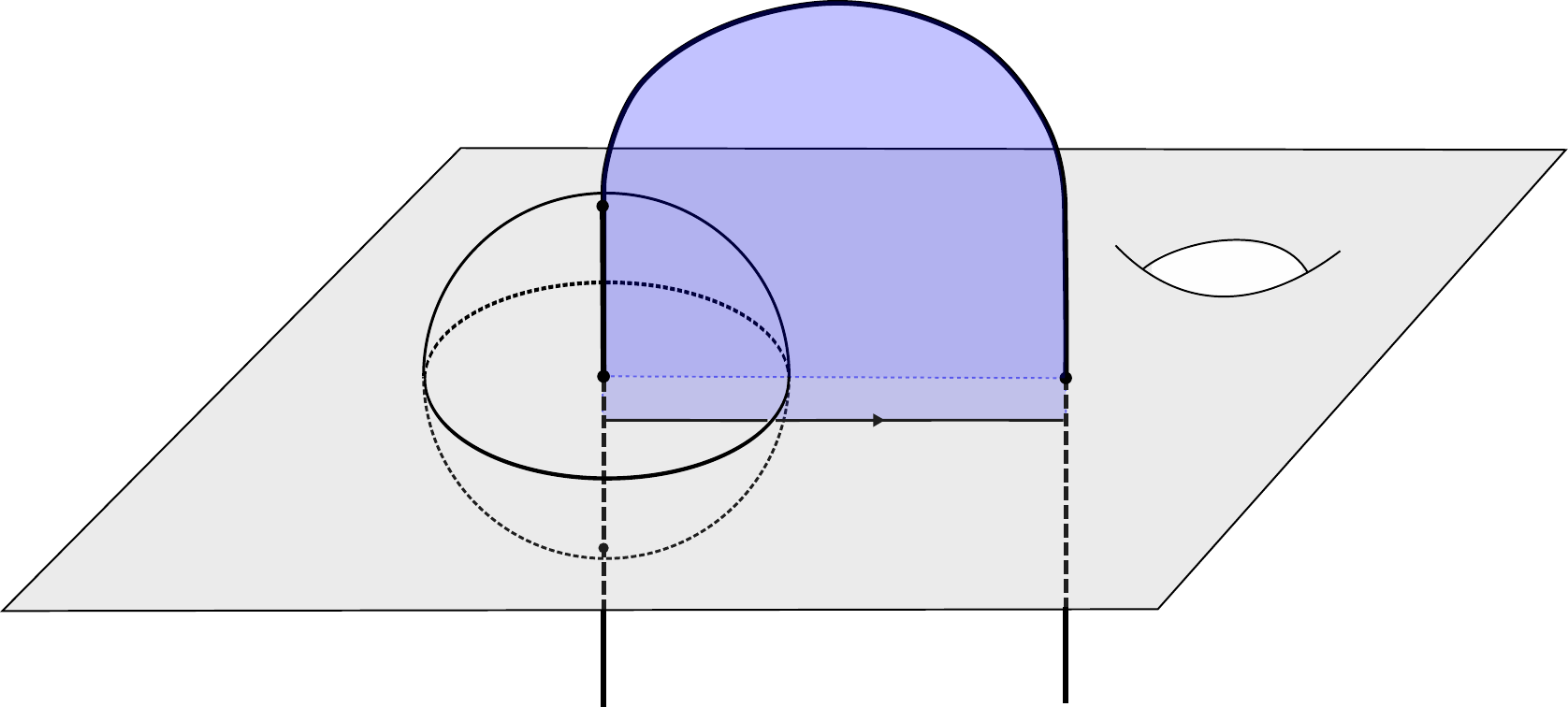
  \caption{In blue a differential $u: \R \times [0,1] \to M$ from \eqref{e:FEI} with boundary on $L$ that connects the chord $x$ to the constant chord $q_{0}$.  The differential $u$ is $J$-holomorphic away from $Q$.}
  \label{f:setup}
\end{figure}

\subsubsection{Building a holomorphic curve from a limit of Floer differentials}\label{ss:CompIdea}

In Section~\ref{ss:limit} we use such solutions \eqref{e:FEI} to the Floer equation,
which are depicted in Figure~\ref{f:setup}, to prove Theorem~\ref{t:main}
in the following way.  Since the Hamiltonian vector field vanishes $X_{H_{Q}} = 0$ outside of the Weinstein neighborhood $\cN$, 
it follows the part of the solution $u$ from \eqref{e:FEI} that maps to $M \backslash \cN$
\begin{equation}\label{e:HCS}
	w = u|_{u^{-1}(M \backslash \cN)} : S \to M\backslash\cN
\end{equation}
is $J$-holomorphic with boundary on $L$ and $\d\cN$.  By shrinking
the fiber diameter of the Weinstein neighborhood $\cN$ to zero and taking a limit, 
via Fish's \cite{Fi11} compactness result we can extract a $J$-holomorphic map
$w_{\infty}: S \to M$ with boundary on $L$ and $Q$ and with energy $E(w_{\infty}) \leq e(Q; M)$.
    
Since we removed the part of the Floer solution in
the Weinstein neighborhood, which contains the center of the ball $q_{0}$, a priori we cannot ensure
that the image of $w_{\infty}$ enters the ball.  
However if the chord $x$ does not correspond to a geodesic loop in $Q$ based at $q_{0}$, then due to the boundary conditions in \eqref{e:FEI} for topological reasons the holomorphic curve $w$ in 
\eqref{e:HCS} must still have part of its boundary on $L$ pass through the ball near $q_{0}$.
By taking the almost complex structure to be standard $J = \i_{*}J_{0}$ in the image of the ball and using Schwarz reflection across $\R^{n}$ and $i\R^{n}$ as needed we can extract from $w_{\infty}$ a non-constant holomorphic curve
$v: S \to (\bB_{R}^{2n}, J_{0})$ passing through $0$ with energy $E(v) \leq 4\,e(Q;M)$.
It then follows from the monotonicity estimate that $R \leq 4\,e(Q;M)$
and hence Theorem~\ref{t:main} is proved.

\subsubsection{Ruling out chords that represent based geodesic loops}

It remains to prove Theorem~\ref{t:ExistsPCD}, which asserts we can assume the chord $x$ does not correspond to a geodesic starting and ending at the same point $q$, and this proof is carried out in Section~\ref{s:pathchordexistence}.  A key element of this proof
is that the Liouville class $\th|_{Q}$ gives an additional filtration on the chain complex $CF^{*}_{(-\infty, 0]}(L;H_{Q})$ and detects when differentials leave the Weinstein neighborhood $\cN$, which is established in Section~\ref{s:nuf}.  Then in Section~\ref{s:CBAB} we use the Liouville-filtration to obtain a bound on a quantity we call the cotangent bundle action $\cA_{H_{Q},L}^{T^{*}Q}(x)$ of the chord $x$.

As we spell out in Section~\ref{s:cHcN} besides constant chords,
there are chords where $f_{H_{Q}}'' > 0$, called near chords, and chords where $f_{H_{Q}}'' < 0$, called far chords.  If $Q$ has a metric of non-positive curvature, in Section~\ref{s:nearchord} we use the Liouville-filtration and the cotangent bundle action bound to show we can assume $x$ is a near chord.  Finally we prove an index relation Proposition~\ref{p:indexR} in Section~\ref{s:indexnear}, which implies that if $x$ is a near chord then it does not correspond to a geodesic starting and ending at the same point in $Q$.


\subsection{Further discussion}

\subsubsection{Studying $Q$ via the Hamiltonian $H_{Q}$}

The idea of proving things about a Lagrangian $Q$ using a Hamiltonian $H_{Q}$ that induces geodesic flow in 
a Weinstein neighborhood of $Q$ goes back to at least Viterbo's \cite{Vi90a, Vi90b} proof of Maslov class 
rigidity for Lagrangian tori in $\C^{n}$.  It was also in \cite{Vi90a} that the relationship between the 
Conley--Zehnder index of a Hamiltonian orbit, the Morse index of the underlying geodesic, and the 
Maslov index was established, and Proposition~\ref{p:indexR} represents the analogous relation for 
Hamiltonian chords. Kerman and {\c{S}}irik{\c{c}}i \cite{Ke09, KS10} later developed methods for proving 
such Maslov class rigidity results using a `pinned' action selector in Hamiltonian Floer theory for this 
type of Hamiltonian and this approach is summarized nicely in \cite[Section 3.2]{Gi11}.  
The Floer--Hofer--Wysocki capacities can be seen as the capacities associated to such `pinned' action selectors.

The limiting argument we use to extract a holomorphic curve from a sequence of Floer differentials for $CF^{*}(L; H_{Q})$
was inspired by the analogous argument in the Hamiltonian Floer context, which appeared in Viterbo and Hermann's papers \cite{Vi99, He00, He04}. By considering $CF^{*}(H_{Q})$ where $dH_{Q}$ is supported in $\cN$ and shrinking $\cN$ to $Q$, from
differentials they extract a holomorphic curve with boundary on the Lagrangian $Q$.  However it is not clear in this setting how to
ensure the differentials pass through $\bB^{2n}_{R} \backslash \cN$, which is needed to be able to conclude
the resulting holomorphic curve passes through the ball.  We overcome this issue by using Lagrangian Floer cohomology for the auxiliary Lagrangian $L$, since we can force our differentials to pass through $\bB^{2n}_{R} \backslash \cN$ in a topologically non-trivial way and hence survive the limiting process.

\subsubsection{Replacing the Lagrangian $Q$ by the Hamiltonian $H_{Q}$}

As remarked at the beginning of the paper, one way of thinking about our method
is that $CF^{*}(L; H_{Q})$ is used as a proxy for the Lagrangian Floer complex $CF^{*}(L, Q)$ 
where $H_{Q}$ replaces the Lagrangian $Q$.  As illustrated in Figure~\ref{f:setup}, given Lagrangians $Q, L \subset (M, \w)$ in a symplectic manifold and distinct points $q_{0}, q_{1} \in Q \cap L$, there is a strong similarity between
holomorphic strips defining the differential for $CF^{*}(L,Q)$
\begin{equation}\label{e:PLFMS}
\begin{cases}
	v= v(s,t): \R \times [0,1] \to M\\
	\d_{s}v + J(u)\d_{t}v = 0\\
	v(\R \times 0) \subset L \quad\mbox{and}\quad v(\R \times 1) \subset Q\\
	v(-\infty, t) = q_{0} \quad\mbox{and}\quad v(+\infty, t) = q_{1}
\end{cases}
\end{equation}
and solutions to the Floer equation defining the differential for $CF^{*}(L; H_{Q})$
\begin{equation}\label{e:CLFMS}
	\begin{cases}
	u = u(s,t): \R \times [0,1] \to M\\
	\d_{s}u + J(u)(\d_{t}u - X_{H_{Q}}(u)) = 0\\
	u(\R \times \{0,1\}) \subset L\\
	u(-\infty, t) = q_{0} \quad\mbox{and}\quad u(+\infty, t) = x(t)
	\end{cases}
\end{equation}
where the chord $x$ represents a geodesic path in $Q$ from $q_{0}$ to $q_{1}$.
In particular the correspondence between $CF^{*}(L; H_{Q})$ and $CF^{*}(L, \vp_{H_{Q}}(L))$ should let
one construct solutions to \eqref{e:PLFMS} by taking limits of solutions to \eqref{e:CLFMS}.

It was suggested by Biran that it would be nice to turn this similarity into a precise relationship between the chain complexes 
$CF^{*}(L, Q)$ and $CF^{*}(L; H_{Q})$.  For certain applications, in situations where $L$ is monotone and $Q$ is not, such a relationship could let one use Lagrangian Floer theory in the monotone setting $HF^{*}(L; H_{Q})$ to stand in for the perhaps undefined 
$HF^{*}(L, Q)$.

\subsubsection{$HF^{*}(L; H_{Q})$ as a deformation of wrapped Floer cohomology}

In this paper we take a very hands-on approach to working with $HF^{*}(L;H_{Q})$.  However let us step back for a moment
to give a different conceptual way of thinking about our argument and its relation to other work.

From \cite{Vi99, SW06, AS06, AS10, Ab12} we know symplectic cohomology
and wrapped Floer cohomology recover the Morse homology of the free and based loop spaces (over $\Z/2$)
$$
	SH^{*}(T^{*}Q) \cong H_{-*}(\Lambda Q) \quad\mbox{and}\quad HW^{*}(T^{*}_{q}Q) \cong H_{-*}(\O_{q}Q).
$$
Moreover given exact Lagrangians $Q, L \subset (M, d\th)$ intersecting transversely $Q \cap L = \{q_{i}\}_{i=0}^{k}$ where $Q$ is closed
and $L$ is open, properly embedded, and $\th|_{L} = 0$, then
there are Viterbo restriction maps \cite{Vi99, AbS10}
$$
	SH^{*}(M) \to SH^{*}(T^{*}Q) \quad\mbox{and}\quad
	HW^{*}(L; M) \to HW^{*}\!\left(\bigcup_{i} T^{*}_{q_{i}}Q\right).
$$
Note that at the chain level $CW^{*}\!\left(\bigcup_{i} T^{*}_{q_{i}}Q\right)$ is generated by
geodesic paths in $Q$ with endpoints in $Q \cap L$ including the constant geodesics $q_{i} \in Q \cap L$.

When $Q$ is not exact there are not such restriction maps (as written), since for example 
there cannot be a ring map from $SH^{*}(\C^{n}) = 0$ to $SH^{*}(T^{*}Q) \not=0$.  Going through
the construction of the Viterbo restriction map one sees that it is necessary to deform $SH^{*}(T^{*}Q)$
to account for differentials that leave a Weinstein neighborhood of $Q \subset M$ and connect orbits for a Hamiltonian of the form 
$H_{Q}$.  The story is similar in our case, where we locate and use differentials in $CF^{*}(L; H_{Q})$, as in Figure~\ref{f:setup}, that
do not arise in $CW^{*}\!\left(\bigcup_{i} T^{*}_{q_{i}}Q\right)$.

Building on Fukaya's \cite{Fu06} wonderful idea of linking the compactified moduli space of 
holomorphic disks on $Q$ with the string topology operations on $\Lambda Q$, 
Cieliebak--Latschev \cite{CL09} have a program to bring such ideas into symplectic field theory.  In particular they have ongoing work
to build a twisted version of Viterbo's map $SH^{*}(M) \to SH^{*}_{tw}(T^{*}Q)$ in terms of a Maurer--Cartan element of $SH^{*}(T^{*}Q)$
when $Q$ is not exact.
As this paper shows the deformation in wrapped Floer cohomology also has applications and it would be
interesting to determine its underlying algebraic nature.

\subsubsection{Other ball packing questions}
The width of a Lagrangian is the relative version of the Gromov width \cite{Gr85} and more generally represents a relative version of the symplectic packing problem, which in its prototypical form is the study of obstructions, beyond volume, 
to symplectic embeddings \cite{Tr95, Bi99, Sc05a, MS12, HK13, Hu10, BH11}. 
Via the symplectic blow-up, the symplectic packing problem is connected with algebraic geometry 
as established by work of McDuff--Polterovich \cite{MP94} and Biran \cite{Bi01}.
This connection was extended to relative packings by Rieser \cite{Ri10}.  Let us also mention
that obstructions to symplectic packings arise in work of Fefferman--Phong \cite{FP82} in connection with the uncertainty principle.

In this paper we solely focus on studying the obstruction to symplectically embedding a single ball $\bB^{2n}_{R}$ into $(M, \w)$ relative to a Lagrangian $Q$.  See \cite{Bu10, Sc05, Ri10} for constructions of relative embeddings.
One can also study the embeddings of multiple disjoint balls
$$
	w_{k}(Q; M):= \sup 
	\left\{R : \coprod_{i=1}^{k}\bB^{2n}_{R} \mbox{ embeds symplectically in $(M, \w)$ relative to $Q$}\right\}
$$
and this was undertaken in \cite{BC09, Ri10}. 
It is conceivable that our method for bounding $w_{1}(Q; M)$ could be adapted to get bounds for $w_{k}(Q; M)$ by using the triangle product on $CF^{*}(L; H)$ and having the auxiliary Lagrangian $L$ intersect $Q$ at the center of each ball.

Given two Lagrangians $Q$ and $L$ intersecting transversally, another ball packing problem considered
by Leclereq \cite{Le08} is symplectic embeddings $\i:(\bB^{2n}_{R}, \w_{0}) \to (M, \w)$ so that
$$
	\i^{-1}(Q) = \bB^{2n}_{R} \cap \R^{n} \quad
	\mbox{and}\quad  \i^{-1}(L) = \bB^{2n}_{R} \cap i\R^{n}.
$$
Let $w(Q,L; M)$ be the supremum over $R$ of such symplectic embeddings.  
Our method of studying $HF^{*}(L; H_{Q})$ also gives a method of finding bounds for $w(Q,L;M)$.

\subsubsection{The size of a Weinstein neighborhood}

A fundamental fact about a Lagrangian $Q \subset (M, \w)$ in a symplectic manifold is that it has a Weinstein 
\cite{We71} neighborhood, i.e.\ a neighborhood in $(M, \w)$ symplectomorphic to a neighborhood of the zero
 section in $(T^{*}Q, d\lambda_{Q})$, where $\l_{Q} = p\,dq$ in local coordinates.  
 Therefore one can wonder how large of a Weinstein neighborhood a
 given Lagrangian admits.  See \cite{El91, PPS03, Si89, Si91, Vi90b, Ze12} for work on this and similar questions.

One way to measure the size of a Weinstein neighborhood $\cN \subset M$ of a Lagrangian $Q \subset (M, \w)$
is as the width of $Q$ in $\cN$.
As the following proposition shows,
the width of a Lagrangian, which is a purely symplectic measurement of the Lagrangian in
the symplectic manifold, quantifies the maximal such size of a Weinstein neighborhood. 

\begin{prop}\label{p:sizeOfWeinstein}
	For a closed Lagrangian $Q \subset (M, \w)$ in a symplectic manifold
	$$
		w(Q; M) = \sup_{\cN} w(Q; \cN)
	$$
	where $\cN$ ranges over all Weinstein neighborhoods of $Q \subset (M, \w)$.
\end{prop}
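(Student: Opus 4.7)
The inequality $\sup_{\cN} w(Q; \cN) \leq w(Q; M)$ is immediate: any Weinstein neighborhood $\cN$ is an open subset of $(M, \w)$ containing $Q$, so any symplectic embedding $\bB^{2n}_{R} \hookrightarrow \cN$ relative to $Q$ is \emph{a fortiori} a relative embedding into $(M, \w)$. For the reverse inequality, it suffices to show that given any relative symplectic embedding $\i: \bB^{2n}_{R} \to (M, \w)$ and any $R' < R$, there exists a Weinstein neighborhood $\cN$ of $Q$ in $M$ with $\i(\bB^{2n}_{R'}) \subset \cN$; then $\i|_{\bB^{2n}_{R'}}$ witnesses $w(Q; \cN) \geq R'$, and letting $R' \uparrow R$ and taking the supremum over all relative embeddings $\i$ yields the claim.

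To construct such an $\cN$, I use the standard symplectic identification $(\C^{n}, \w_{0}) \cong (T^{*}\R^{n}, d\lambda)$ sending $\R^{n}$ to the zero section (via $(x,y) \mapsto (x, -y\,dx)$) to view $\i$ as a symplectic embedding $\tilde\i: V \to M$, where $V$ is an open neighborhood in $T^{*}Q$ of the open Lagrangian disk $D := \i(\Int\bB^{2n}_{R} \cap \R^{n}) \subset Q$, and $\tilde\i$ restricts to the identity along $D$. The task then becomes to promote $\tilde\i$, after a slight shrinkage towards $D$, to a Weinstein embedding $\Phi: U \to M$ of a neighborhood $U$ of $Q$ in $T^{*}Q$ that agrees with $\tilde\i$ on the preimage of $\i(\bB^{2n}_{R'})$; the image $\cN := \Phi(U)$ is then the desired Weinstein neighborhood of $Q$ containing $\i(\bB^{2n}_{R'})$.

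The extension is a relative version of the Weinstein Lagrangian neighborhood theorem, executed in two steps. First, the vertical Lagrangian subbundle along $D$ is pushed forward by $\tilde\i$ to a Lagrangian subbundle $F \subset TM|_{D}$ transverse to $TD$; since the fiber of Lagrangian complements of $T_{q}Q$ in $T_{q}M$ is contractible, $F$ extends smoothly to a Lagrangian subbundle over all of $Q$, and applying Weinstein's theorem with this prescribed Lagrangian complement yields a Weinstein embedding $\Phi_{0}: U_{0} \to M$ whose 1-jet along $Q$ matches that of $\tilde\i$ over $D$. Second, a cutoff Moser argument, applied to the family of symplectic forms interpolating between $\Phi_{0}^{*}\w$ and $\tilde\i^{*}\w$ on a neighborhood of a compactly contained $D' \Subset D$ chosen large enough that $\tilde\i^{-1}(\i(\bB^{2n}_{R'}))$ lies over $D'$, and localized in the base direction by a cutoff function, modifies $\Phi_{0}$ to a new Weinstein embedding $\Phi$ that coincides with $\tilde\i$ on the relevant region.

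The main technical obstacle is the second step: since $\tilde\i$ and $\Phi_{0}$ agree only infinitesimally along $D$ rather than on an open neighborhood, Moser's trick for the interpolating family must be combined with a careful base cutoff so that the deformation does not spoil the Weinstein property of $\Phi_{0}$ over $Q \setminus D'$. The first (Lagrangian-subbundle extension) step is by contrast essentially routine, being a contractibility statement.
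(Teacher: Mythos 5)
There is a genuine gap in the second step. After your first step you have a Weinstein embedding $\Phi_0: U_0 \to (M,\w)$, which by definition satisfies $\Phi_0^*\w = d\l_Q$. But $\tilde\i$ is also a symplectic embedding of $(T^*\R^n,d\l)$ into $(M,\w)$, so under the identification of a neighborhood of $D$ in $T^*Q$ with an open set of $T^*\R^n$ you likewise have $\tilde\i^*\w = d\l_Q$. Hence the ``family of symplectic forms interpolating between $\Phi_0^*\w$ and $\tilde\i^*\w$'' is the \emph{constant} family $d\l_Q$: the Moser vector field is identically zero, the Moser flow is the identity, and no modification of $\Phi_0$ is produced. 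The Moser trick rectifies \emph{forms}, but here the forms already agree; what differs is the \emph{maps}. The actual obstruction is that $g := \Phi_0^{-1}\circ\tilde\i$ is a non-trivial symplectomorphism of a neighborhood of $D$ in $T^*Q$ that preserves $D$ and is only $C^1$-tangent to the identity along $D$. To replace $\Phi_0$ by a Weinstein embedding that equals $\tilde\i$ over $D'$, you would have to interpolate $g$ to the identity in the base direction \emph{through symplectomorphisms} --- e.g.\ write it as the time-one map of a (symplectic or Hamiltonian) isotopy and cut off the generating vector field --- which is a different and substantially harder matter than a ``careful base cutoff'' of a Moser primitive, and in fact the naive cutoff of such an isotopy need not stay in the group of symplectomorphisms nor preserve the property of mapping the zero section to $Q$.

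The paper sidesteps this entirely by making a clever choice in the \emph{first} (not second) step. It picks the compatible almost complex structure $J$ on $M$ so that $\i^*J = J_0$; then the map $\Psi: T^*Q \to M$ built from the exponential of the induced metric $g_J$ and the $J$-complement of $TQ$ is, in the ball's Darboux coordinates, exactly $(x,y)\mapsto(x,-y)$, and hence $\Psi_*d\l_Q = \w$ already holds on the whole embedded ball, not merely to first order along $Q$. The Moser primitive $\sigma$ with $d\sigma = \w - \Psi_*d\l_Q$ is then arranged to vanish on $\i(\bB^{2n}_{R-\e})$, so the Moser flow is the identity there and the resulting Weinstein neighborhood contains the (slightly shrunk) ball. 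In other words, rather than constructing a second map and trying to match it to $\tilde\i$, the paper tunes the metric data so that the single map $\Psi$ is already $\tilde\i$ (up to the sign flip) on the ball. If you want to make your version work, you would need to perform this matching at the level of the geometric data entering the Weinstein construction (the Lagrangian complement \emph{and} the fiberwise ``exponential''), not just the Lagrangian complement, so that $\Phi_0 = \tilde\i$ over $D'$ by construction before Moser is ever invoked.
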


This notion of the size of a Weinstein neighborhood also leads to a invariant for Riemannian manifolds in the following way.  Given a Riemannian metric $g$ on $Q$ one can define the Barraud--Cornea size
of $(Q, g)$ to be 
$$
	\mbox{S}_{BC}(Q, g) := w(Q; D^{*}_{g}Q)
$$
the width of $Q$ in the unit codisk bundle $D^{*}_{g}Q = \{ v \in T^{*}Q : \abs{v}_{g} \leq 1\}$.
This is a size-invariant in the sense of Guth \cite{Gu10} and it would be interesting to determine 
what it says about the Riemannian manifold $(Q, g)$. 

\begin{proof}[Proof of Proposition~\ref{p:sizeOfWeinstein}]
	The inequality $w(Q; M) \geq \sup_{\cN} w_{BC}(\cN)$ is by definition.
	For the opposite inequality we will show if $\i: \bB^{2n}_{R} \to (M, \w)$ is a symplectically embedding
	relative to $Q$, then for all $\e > 0$ there
	is a Weinstein neighborhood $\cN$ containing the image of $\i(\bB^{2n}_{R-\e})$.
	The proof is just a refinement of the Moser--Weinstein argument.
	
	Pick a compatible almost complex structure $J$ on $(M, \w)$ so that $\i^{*}J = J_{0}$ is the
	standard complex structure on the ball, which means the induced metric $g_{J}$ on $M$
	is such that $\i^{*}g_{J} = g_{0}$ is the standard Euclidian metric.  Define the map
	$$
		\Psi: T^{*}Q \to M \quad\mbox{by}\quad \Psi(v^{*}_{q}) = \exp_{q}(-J_{q}\Phi_{q}(v^{*}_{q}))
	$$
	where $\exp_{q}: T_{q}M \to M$ is the exponential map for $g_{J}$ and
	$\Phi_{q}: T^{*}_{q}Q \to T_{q}Q$ is the isomorphism induced by the metric $g_{J}$.
	For any choice of $J$ one has $\Psi_{*}d\l_{Q} = \w$ on vectors $T_{Q}M$
	and for our choice of $J$, 
	in Darboux coordinates on $T^{*}Q$ and $\i(\bB^{2n}_{R})$ induced by $\i$, one has
	$\Psi(x,y) = (x, -y)$ and hence $\Psi_{*}d\l_{Q} = \w$ on $\i(\bB^{2n}_{R})$.
	
	The homotopy $\Psi_{t}(v^{*}_{q}) = \Psi(t\, v^{*}_{q})$ 
	constructs a primitive $d\sigma = \w - \Psi_{*}d\l_{Q}$
	with $\sigma$ defined in an open neighborhood of $Q$ that contains
	$\i(\bB^{2n}_{R-\e})$.
	After restricting the domain to a neighborhood $\cN_{0}$ of the zero section, 
	Moser's method isotopes $\Psi$ to a symplectic embedding 
	$\widetilde{\Psi}: \cN_{0} \to (M, \w)$ 
	and since $\sigma$ vanishes on $\i(\bB^{2n}_{R-\e})$ one can ensure
	$\i(\bB^{2n}_{R-\e}) \subset \widetilde{\Psi}(\cN_{0})$.
\end{proof}

\subsubsection{A remark on notation}
In writing this paper it was necessary to use a non-trivial amount of loaded notation, so for the convenience of the reader at the end of the paper we have included a short index for the particularly subtle bits of notation used frequently within proofs. 


\subsection*{Acknowledgements}
We would like to thank Samuel Lisi and Antonio Rieser for introducing us to this problem and Octav Cornea for
his enthusiastic support and encouragement regarding this project.
We are very grateful to Helmut Hofer and Leonid Polterovich for 
their support with this work, and we would thank them as well as Mohammed Abouzaid, Paul Biran, Octav Cornea,
Lu\'{i}s Diogo, and Sobhan Seyfaddini for helpful conversations along with Georgios Dimitroglou Rizell
for sharing an preliminary draft of his paper \cite{Di13} with us.  
We would also like to thank the referees for their helpful comments and suggestions.


\section{The Lagrangian Floer--Hofer--Wysocki capacity}\label{s:LFHc}

\subsection{Lagrangian Floer cohomology}\label{ss:LFC}

Let us begin by briefly reviewing 
Lagrangian Floer cohomology \cite{Fl88, Fl88a, Oh93, Oh95} for admissible Lagrangians.
While these references restrict to compact Lagrangians $L$, due to the maximum principle in 
Lemma~\ref{l:MaxPrin} the results carry over to admissible Lagrangians.  While everything we say 
in this section is standard, in part we review it in order to establish our notations and conventions for the convenience of the reader.

\subsubsection{Preliminary definitions and Floer data}\label{s:Preliminaries}

Recall a \textbf{Liouville manifold} $(M^{2n}, \w)$ is an exact symplectic manifold $\w = d\th$
such that the vector field $Z_{\th}$, determined by $\i_{Z_{\th}}\w = \th$, has a complete flow 
$\vp_{Z_{\th}}^{t}$, and there is a compact codimension zero submanifold
$\overline{M} \subset M$ such that $Z_{\th}$ is positively transverse to $\d \overline{M}$ and 
$$
M = \overline{M} \cup \bigcup_{t\geq 0} \vp_{Z_{\th}}^{t}(\d\overline{M}).
$$
These conditions imply $\a := \th|_{\d\overline{M}}$ is a contact form on $\d\overline{M}$  
and there is an identification 
\begin{equation}\label{e:end}
M \backslash \Int\overline{M} = [1, \infty) \times \d\overline{M}
\end{equation}
given by the Liouville flow $\vp_{Z_{\th}}^{\log(r)}$
where for $r \in [1, \infty)$ one has
$\th = r\a$ and $Z_{\th} = r\d_{r}$.

\begin{defn}\label{d:admissibleLag}
We will be doing Floer theory with \textbf{admissible Lagrangians} $L \subset (M, d\th)$.  We define this to mean
$L$ is connected, orientable, and exact, i.e.\ $\th|_{L} = dk_{L}$ for some smooth $k_{L}: L \to \R$.  
If $L$ is not a closed manifold, then we will assume that $L$ is open, properly embedded in $M$, and 
$\supp(k_{L}) \subset L \cap \overline{M}$.
\end{defn}
Note that if we extend $k_{L}$ to a compactly supported function $k: M \to \R$, then
$\th' = \th - dk$ is still a Liouville $1$-form for the same symplectic
form.  Therefore for a fixed admissible Lagrangian $L$ we may assume $\th|_{L} = 0$ and $k_L = 0$.

\begin{defn}\label{d:cJ}
A compatible almost complex structure 
$J$ on a Liouville manifold $(M, d\th)$ is said to have \textbf{contact type} if 
\begin{equation}\label{e:Jct}
	\th \circ J = dr 
\end{equation}
on the cylindrical end \eqref{e:end}.  
The set of \textbf{admissible almost complex structures} $\cJ_{\th}(M)$ are smooth
	families of compatible almost complex structures $J = \{J_{t}\}_{t \in S^{1}}$ on $(M, d\th)$ that 
	at infinity are contact type and time independent. 
\end{defn}
The contact type condition implies $J$-holomorphic curves
$u: (S, j) \to (M, J)$ have a maximum principle, even when there are Lagrangian boundary conditions.
\begin{lem}[{\cite[Lemma 7.2]{AbS10}}]\label{l:MaxPrin}
	Let $(Y^{2n-1}, \xi)$ be a closed contact manifold with contact form $\a$
	and let $L$ be a properly embedded Lagrangian in $(W, d\th) = ([1, \infty) \times Y, d(r\a))$
	such that $\d L = L \cap \d W$ and $\th|_{L} = 0$.
	If $J$ is a compatible almost complex structure on $(W, d\th)$ with contact type
	and $(S, j)$ is a compact Riemann surface, then all $J$-holomorphic curves 
	$$u: (S, j) \to (W, J)$$ with $u(\d S) \subset \d W \cup L$ are constant.
\end{lem}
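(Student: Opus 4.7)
The plan is to prove the maximum principle for the function $\rho := r \circ u \colon S \to [1,\infty)$ by showing it is subharmonic, satisfies a Neumann-type condition on the Lagrangian boundary, and attains its minimum $1$ on the $\partial W$-boundary, so that any nontrivial behaviour is forced by Hopf's boundary point lemma and unique continuation.

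First, I would establish subharmonicity. From $J^{2}=-\id$ and the contact-type condition $\th\circ J = dr$, we get $dr\circ J = -\th$, so using $du\circ j = J\circ du$ one computes
\[
d\rho \circ j \;=\; dr\circ du\circ j \;=\; dr\circ J\circ du \;=\; -u^{*}\th.
\]
Differentiating and using $\w = d\th$ yields $-d(d\rho\circ j) = u^{*}\w \ge 0$, since $u$ is $J$-holomorphic for a compatible $J$. In any local conformal coordinate this reads $\Delta \rho \ge 0$, with equality exactly where $du = 0$.

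Next I would analyse the boundary $\partial S = \partial_{1} S \cup \partial_{L} S$, where $\partial_{1} S := u^{-1}(\d W)$ and $\partial_{L} S := u^{-1}(L)\cap\partial S$. Along $\partial_{1} S$ one has $\rho \equiv 1$, which is the global infimum of $\rho$ on $W$. Along $\partial_{L} S$, if $\eta$ is tangent to $\partial S$, then $du(\eta) \in TL$, so $\th(du(\eta)) = 0$ because $\th|_{L} = 0$; then the identity $d\rho\circ j = -u^{*}\th$ gives $d\rho(j\eta) = 0$, i.e.\ the outward normal derivative of $\rho$ vanishes along $\partial_{L} S$. Applying the strong maximum principle to the subharmonic $\rho$: if $\rho$ attains its max in the interior, $\rho$ is constant; if the max is attained at an interior point of $\partial_{L} S$, Hopf's boundary lemma forces either strict positivity of the outward normal derivative or that $\rho$ is locally constant, so the vanishing Neumann condition forces the latter; and if the max is attained only on $\partial_{1}S$ then $\rho \le 1$, which combined with $\rho \ge 1$ yields $\rho \equiv 1$, hence $\Delta \rho \equiv 0$ and $u^{*}\w \equiv 0$. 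In each case we conclude $du = 0$ on a nonempty open subset of $S$, and then unique continuation for pseudoholomorphic maps (Aronszajn's theorem applied to the elliptic equation $\bar{\partial}_{J} u = 0$) propagates $du \equiv 0$ to all of $S$, giving that $u$ is constant.

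The main subtleties I anticipate are twofold: (i) controlling corners where $\partial_{L} S$ meets $\partial_{1} S$ (which are points of $\partial L = L \cap \partial W$) so that Hopf and the strong maximum principle apply — this is handled by noting that $\rho = 1$ on $\partial_{1} S$ is the global infimum, so a corner is never a point where the global maximum is strictly attained unless $\rho \equiv 1$; and (ii) the invocation of unique continuation with boundary, where one uses that $u$ near the Lagrangian boundary can be doubled or extended (via the totally real boundary condition) to obtain an elliptic system on an open Riemann surface to which Aronszajn applies. Modulo these standard technicalities, the computation of $d\rho\circ j = -u^{*}\th$ — which conceptually is the essence of the contact-type condition together with the exactness of $L$ — does all the work.
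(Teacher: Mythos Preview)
Your argument is correct, but it takes a different route from the paper's. The paper gives a direct energy estimate: writing $\partial S = \partial_{n}S \cup \partial_{\ell}S$ with $u(\partial_{n}S)\subset \partial W$ and $u(\partial_{\ell}S)\subset L$, one has by Stokes and $\th|_{L}=0$ that
\[
0 \le E(u) = \int_{S} u^{*}d\th = \int_{\partial_{n}S} u^{*}\th,
\]
and then the contact-type condition $\th\circ J = dr$ together with $J$-holomorphicity gives $\th(du(\zeta)) = dr(du(-j\zeta)) \le 0$ for $\zeta$ positively tangent to $\partial_{n}S$ (since $-j\zeta$ points outward and $r\ge 1$). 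Hence $E(u)\le 0$, so $E(u)=0$ and $u$ is constant. This avoids Hopf's lemma and unique continuation entirely.

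Your approach via the subharmonicity of $\rho = r\circ u$ and the Neumann condition on $\partial_{L}S$ is the more PDE-theoretic one, and it is what actually justifies calling the lemma a ``maximum principle''; it also generalises more transparently to the Hamiltonian-perturbed setting used later in the paper. Two small simplifications: once you conclude (in any of your three cases) that $\rho$ is constant on a connected component of $S$, you immediately get $u^{*}\w \equiv 0$ there, hence $du\equiv 0$ by compatibility, so the appeal to Aronszajn/unique continuation is unnecessary. Likewise the corner worry is moot: $\partial S$ is a smooth $1$-manifold, and if $u(p)\in L\setminus \partial W$ then by continuity $u(\partial S)\subset L$ near $p$, so the Neumann computation applies; if $u(p)\in\partial W$ then $\rho(p)=1$ is the global minimum and cannot be a strict maximum.
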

\begin{proof}
	It suffices to show that the $L^{2}$-energy $E(u) = 0$, where
	$E(u):= \tfrac{1}{2} \int_{S} \norm{du}^{2}$
	for the metric $d\th(\cdot, J\cdot)$.
	Using the fact that $u$ is $J$-holomorphic and $\th|_{L} = 0$, we get by Stokes' theorem,
	$$
		0 \leq E(u) = \int_{S} u^{*}(d\th)
		= \int_{\d_{n}S} u^{*}\th
	$$
	where $\d_{n}S \subset \d S$ is the part of the boundary mapped to $\d W$.
	If $\zeta \in T\d_{n}S$ is positively oriented, then $-j\zeta$ points outwards from $S$
	and hence it follows that $dr(du(-j\zeta)) \leq 0$.  
	Using that $u$ is $J$-holomorphic, i.e.\ $J\circ du = du \circ j$,
	and that $J$ has contact type \eqref{e:Jct} gives
	$\th(du(\zeta)) = dr(du(-j\zeta)) \leq 0$ and therefore $E(u) \leq 0$. Therefore $E(u) =0$ and hence
	$u$ is constant.
\end{proof}

\begin{defn}\label{d:cH}
	The set of \textbf{admissible Hamiltonians} $\cH \subset C^{\infty}(S^{1} \times M)$
	are those $H$ where there is a constant $M_{H}$ such that
	$H \leq M_{H}$, $\mbox{supp}(dH)$ is compact, and $H = M_{H}$ at infinity.  
\end{defn}

\subsubsection{The index and action of Hamiltonian chords}\label{sss:chords}

For an admissible Lagrangian $L \subset M$ and an admissible Hamiltonian $H$, let
$\cC^{*}_{H}(L)$ denote the \textbf{Hamiltonian chords for $L$}, i.e.\ the smooth paths
$x: [0,1] \to M$ where
\begin{equation}\label{e:LagChord}
	x(0), x(1) \in L \quad\mbox{and}\quad
	\tfrac{\d}{\d t}x(t) = X_{H_{t}}(x(t)).
\end{equation}
We will denote by $\cC_{H}(L) \subset \cC^{*}_{H}(L)$ the set of contractible chords, 
i.e.\ $[x] = 0$ in $\pi_{1}(M, L)$.  A chord $x \in \cC_{H}^{*}(L)$ is \textbf{non-degenerate} if the vector spaces 
$T_{x(1)}L$ and $d\vp_{H}^{1}T_{x(0)}L$ are transverse.

A \textbf{capping disk} $v$ of a chord $x \in \cC_{H}(L)$ is a smooth map
\begin{equation}\label{e:LagCap}
	v: \bD^{2} \to M \quad\mbox{such that $v(e^{\pi i t}) = x(t)$ and $v(e^{-\pi i t}) \in L$ for $t\in [0,1]$}.
\end{equation}
In Section~\ref{ss:maslovcap} we will recall how to associate a $\Z$-valued
Maslov index $\abs{(x,v)}_{\Mas}$ to a non-degenerate chord with a capping disk, and this
induces a well-defined $\Z/2$-grading for non-degenerate chords $x \in \cC_{H}(L)$
$$
	\abs{x}_{\Mas} := \abs{(x,v)}_{\Mas} \in \Z/2 \quad\mbox{for any capping disk $v$}.
$$
Finally there is an action functional
$\cA_{H,L}: \cC_{H}(L) \to \R$ defined by
\begin{equation}\label{e:LagAct}
	\cA_{H,L}(x) = \int_{0}^{1} H(t, x(t)) dt - \int_{0}^{1} x^{*}\th + k_{L}(x(1)) - k_{L}(x(0))
\end{equation}
where recall it is possible to pick $\th$ such that $\th|_L = 0$ and $k_L = 0$.
\begin{defn}\label{d:non-degenerate}
	An admissible Hamiltonian $H \in \cH$
	 is \textbf{non-degenerate with respect to $L$}
	if all chords $x \in \cC_{H}(L)$ with action $\cA_{H,L}(x) < M_{H}$
	are non-degenerate.
\end{defn}

\subsubsection{The complex}

Let $J \in \cJ_{\th}(M)$ be an admissible almost complex structure
and consider the Floer equation
\begin{equation}\label{e:FEL}
	\d_{s}u + J_{t}(u)(\d_{t}u - X_{H_{t}}(u)) = 0
\end{equation}
for smooth maps $u= u(s,t): \R \times [0,1] \to M$
that satisfy the boundary conditions $u(\R \times t) \subset L$ for $t=0,1$.
For a solution to \eqref{e:FEL} define its energy by
\begin{equation}\label{e:EnergyL}
	E(u) := \int_{\R \times [0,1]} \norm{\d_{s}u}^{2}_{J} ds\,dt 
	\quad\mbox{where $\norm{\d_{s}u}^{2}_{J} = d\th(\d_{s}u, J_{t}(u)\d_{s}u)$.}
\end{equation}
For non-degenerate chords $x_{\pm} \in \cC_{H}(L)$ let 
\begin{equation}\label{e:MLFC}
	\cM(x_{-}, x_{+}; L, H, J) 
\end{equation}
denote the set of finite energy solutions to the Floer equation
\eqref{e:FEL} that have asymptotic convergence 
$\lim_{s \to \pm \infty} u(s,\cdot) = x_{\pm}(\cdot)$.
Elements of $\cM(x_{-}, x_{+}; L, H, J)$ can be though of as negative gradient flow lines for $\cA_{H,L}$
and in particular there is the standard a priori energy bound
\begin{equation}\label{e:actionbound}
	0 \leq E(u) = \cA_{H,L}(x_{-}) - \cA_{H,L}(x_{+})
\end{equation}
for $u \in \cM(x_{-}, x_{+}; L, H, J)$.  
Note that since $X_{H_{t}}$ is compactly supported and $J$ is time independent and contact type
at infinity, the Floer equation \eqref{e:FEL} is the Cauchy-Riemann equation when $u$ is outside some compact set and therefore by Lemma~\ref{l:MaxPrin} solutions to \eqref{e:FEL} have a maximum principle.  

For fixed admissible $L$ and non-degenerate $H$, if the linearized operator of \eqref{e:FEL} is surjective for all 
$u \in  \cM(x_{-}, x_{+}; L, H, J)$, then $J$ is called \textbf{regular} for $(L, H)$ and such $J$ are generic
in $\cJ_{\th}$.
In particular the moduli space $\cM(x_{-}, x_{+}; L, H, J)$ is a smooth manifold,
whose dimension near a solution $u$ is 
\begin{equation}\label{e:dimModL}
\dim_{u} \cM(x_{-}, x_{+}; L, H, J) = 
\abs{(x_{-},v)}_{\Mas} - \abs{(x_{+},v \# u)}_{\Mas}
\end{equation}
where $v$ is any capping disk of the chord $x_{-}$ and $v \# u$ is the induced capping
disk for $x_{+}$.
Let 
$\cM_1(x_{-}, x_{+}; L, H, J)$ denote the union of the $1$-dimensional connected components of 
$\cM(x_{-}, x_{+}; L, H, J)$.  Translation in the domain gives an $\R$-action to the moduli space 
$\cM(x_{-}, x_{+}; L, H, J)$ and $\cM_1(x_{-}, x_{+}; L, H, J)/\R$ is a compact $0$-dimensional manifold.

For $a \in \R\cup\{\pm\infty\}$, let $CF^{*}_{a}(L;H)$ be the vector space over $\Z/2$ generated by chords
$x \in \cC_{H}(L)$ with action $\cA_{H,L}(x) > a$ and define the quotient
$$
CF_{(a,b]}^{*}(L;H) =
CF^{*}_{a}(L;H)/CF^{*}_{b}(L;H)
$$ 
where we will refer to $(a,b]$ as the action window.  This vector space
is $\Z/2$-graded whenever all chords in the action window are non-degenerate.
Standard compactness and gluing results show that if $H$ is non-degenerate with respect to
$L$ and $J$ is regular for $(L,H)$, then for
$b < M_{H}$ the $\Z/2$-linear map
\begin{equation}\label{e:diffL}
	d_{J}: CF^{*}_{(a,b]}(L;H) \to CF^{*+1}_{(a,b]}(L;H)	
\end{equation}
defined by counting isolated positive gradient trajectories
$$
	d_{J}x = \sum_{y} 
	(\#_{\Z_{2}} \cM_1(y, x; L, H, J)/\R) \, y
$$
where the sum is over chords $y \in \cC_{H}(L)$ with action in $(a,b]$,
makes $(CF^{*}_{(a,b]}(L;H), d_{J})$ a chain complex. 
Lagrangian Floer cohomology 
$$HF^{*}_{(a,b]}(L; H) = H^{*}(CF^{*}_{(a,b]}(L;H),d_{J})$$ is defined to be the homology
of this chain complex and since it is independent of the regular $J \in \cJ_{\th}$ we 
suppress it from the notation.

It follows from \eqref{e:actionbound} that the differential $d$ increases
the action $\cA_{H,L}$.  In particular when $a_{0} < a_{1}$ the inclusion
map
$CF^{*}_{(a_{1}, b]}(L;H) \to CF^{*}_{(a_{0}, b]}(L; H)$
is a map of chain complexes and induces
$$
	HF^{*}_{(a_{1}, b]}(L;H) \to HF^{*}_{(a_{0}, b]}(L;H).
$$
When $b_{0} < b_{1}$ the quotient map $CF^{*}_{(a,b_{1}]}(L;H) \to CF^{*}_{(a,b_{0}]}(L;H)$
induces a map
$$HF^{*}_{(a, b_{1}]}(L;H) \to HF^{*}_{(a, b_{0}]}(L;H).$$
These maps are called \textbf{action window maps}.

\subsubsection{Isomorphism with coholomogy}

\begin{defn}
	Let $L$ be an admissible Lagrangian in $(M, d\th)$
	and let $f: M \to \R$ be an admissible Hamiltonian that is non-degenerate with respect to $L$.
	If the following conditions are satisfied
	\begin{enumerate}
		\item[(i)] every chord $x \in \cC_{f}(L)$ is a critical point of $f|_{L}$,
		\item[(ii)]
		the only critical points for $\{f|_{L} > 0\}$ occur at infinity where $f$ is constant,
		\item[(iii)]
		the regular sublevel set $\{f|_{L} \leq 0\}$ is a deformation retract of $L$,
		\item[(iv)]
		$f|_{L}$ is a $C^{2}$-small Morse function on $\{f|_{L} \leq 0\}$,
	\end{enumerate}
	then we say $f$ is 
	\textbf{adapted} to $L$.
\end{defn}

It follows from \cite[Theorem 2]{Fl89a} that if $f:M \to \R$ is adapted to $L$
and $f > -a$,
then via Morse cohomology one has a chain-level isomorphism 
\begin{equation}\label{e:CLI}
	H^{*}_{\mbox{\tiny{Morse}}}(L) \cong HF^{*}_{(-a,0]}(L;f)
\end{equation}
given by mapping critical points $x \in \mbox{Crit}(f|_{L})$ with $f|_{L}(x) < 0$ to the corresponding constant chord 
$x \in \cC_{f}(L)$.  In particular
\begin{equation}\label{e:1L}
\mbox{$\ide_{L} \in H^{*}(L)$ corresponds to 
$\left[ \sum_{i=1}^{k} x_{i} \right] \in HF^{*}_{(-a,0]}(L;f)$}
\end{equation}
where $x_{i}$ are the critical
points of $f|_{L}$ with $f|_{L} \leq 0$ and Morse index zero.

\subsubsection{Continuation maps}

Let $(H^{-}, J^{-})$ and $(H^{+}, J^{+})$ be two regular pairs of admissible Hamiltonians non-degenerate with respect to $L$ and admissible almost complex structures.  For $s \in \R$ let $s \mapsto (H^{s}, J^{s})$ be a path of admissible Hamiltonians and almost complex structures that is constant at the ends
and connects the two original pairs $(H^{\pm}, J^{\pm}) = (H^{\pm \infty}, J^{\pm \infty})$.  
Consider solutions to the partial differential equation
\begin{equation}\label{e:ContMap}
	\begin{cases}
	\d_{s}u + J_{t}^{s}(u)(\d_{t}u - X_{H_{t}^{s}}(u)) = 0\\
	u: \R \times [0,1] \to M\\
	u(\R \times \{0,1\}) \subset L
	\end{cases}
\end{equation}
and for non-degenerate chords $x^{\pm} \in \cC_{H^{\pm}}(L)$, let
\begin{equation}
	\cM(x^{-}, x^{+}; L, \{H^{s}, J^{s}\}_{s})
\end{equation}
denote the set of finite energy solutions $u$ to \eqref{e:ContMap} such that 
$\lim_{s \to \pm\infty} u(s, \cdot) = x^{\pm}(\cdot)$.
When the path $J^{s}$ is generic, the spaces 
$\cM(x^{-}, x^{+}; L, \{H^{s}, J^{s}\}_{s})$ are finite dimensional manifolds
whose local dimension is given by \eqref{e:dimModL}.  

Let $\cM_{0}(x^{-}, x^{+}; L, \{H^{s}, J^{s}\}_{s})$ denote the zero dimensional components and consider
$$
	\Phi_{\{H^{s},J^{s}\}}: (CF^{*}_{(a,b]}(L; H^{+}), d_{J^{+}}) \to (CF^{*}_{(a,b]}(L; H^{-}), d_{J^{-}})
$$
which for $x^{+} \in \cC_{H^{+}}(L)$ with action in the window $(a,b]$ is defined by
$$
	\Phi_{\{H^{s},J^{s}\}}(x^{+}) = \sum_{x^{-}} \#_{\Z_{2}} \cM_0(x^{-}, x^{+}; L, \{H^{s}, J^{s}\}_{s}) \, x^{-}
$$
where the sum is over $x^{-} \in \cC_{H^{-}}(L)$ with action in $(a, b]$.
As with \eqref{e:actionbound}, for solutions to \eqref{e:ContMap} one has the bound
$$
	0 \leq E(u) \leq \cA_{H^{-}, L}(x^{-}) - \cA_{H^{+}, L}(x^{+}) + 
	\int_{\R \times [0,1]} (\d_{s}H_{t}^{s})(u(s,t))\,dsdt
$$
and hence if 
\begin{equation}\label{e:actionboundC}
	\int_{-\infty}^{+\infty} \sup_{M \times [0,1]}\d_{s}H_{t}^{s}\,ds \leq 0
\end{equation}
then $\Phi_{\{H^{s},J^{s}\}}$ preserves the action filtration, is a chain map,
and induces a map
$$
	\Phi_{\{H^{s},J^{s}\}}: HF^{*}_{(a,b]}(L; H^{+}) \to HF^{*}_{(a,b]}(L; H^{-})
$$
called a continuation map.

These maps are particularly nice when 
$H^{+} \leq H^{-}$ and the homotopy is monotone $\d_{s}H_{t}^{s} \leq 0$,
in which case we will call $\Phi_{\{H^{s},J^{s}\}}$ a \textbf{monotone continuation} map.
On homology 
monotone continuation maps are independent of the choice of monotone homotopy $(H^{s}, J^{s})$
used to define them, so we will denote them by
\begin{equation}\label{e:MonoCon}
	\Phi_{H^{+}H^{-}}:  HF^{*}_{(a,b]}(L; H^{+}) \to HF^{*}_{(a,b]}(L; H^{-}).
\end{equation}
They also commute with action window maps, and are natural in the sense that $\Phi_{HH} = \ide$
and
\begin{equation}\label{e:MCMN}
	\Phi_{H^{(2)}H^{(3)}} \circ \Phi_{H^{(1)}H^{(2)}} = \Phi_{H^{(1)}H^{(3)}}
\end{equation}
for admissible Hamiltonians $H^{(1)} \leq H^{(2)} \leq H^{(3)}$.

\subsection{The Lagrangian Floer--Hofer--Wysocki capacity}\label{s:LAS}

For a compact subset $X \subset M$, consider the set $\cH^{X}$ of
admissible Hamiltonians from Definition~\ref{d:cH} that are negative 
in a neighborhood of $X$ and are positive at infinity
\begin{equation}\label{e:cHW}
	\cH^{X} = \{ H \in \cH : 
	\mbox{
	$M_{H} > 0$ and
	$H|_{S^{1} \times X} < 0$.}	 \}.
\end{equation}
Since $(\cH^{X}, \leq)$ is a directed system, for $a > 0$ we define 
\begin{equation}
	HF^{*}(L; X, a) := \varinjlim_{H \in \cH^{X}} HF^{*}_{(-a, 0]}(L; H)
\end{equation}
where monotone continuation maps \eqref{e:MonoCon}
are used for the direct limit.
If $X_{-} \subset X_{+}$
are compact subsets, then there is a natural restriction map
\begin{equation}\label{e:LagRes}
	HF^{*}(L; X_{+},a) \to HF^{*}(L; X_{-}, a)
\end{equation}
since $\cH^{X_{+}} \subset \cH^{X_{-}}$.  If $a_{-} < a_{+}$, then the action window maps
induce a map
$$
	HF^{*}(L; X, a_{-}) \to HF^{*}(L; X, a_{+}).
$$   
For any compact subset $X \subset M$ there is a natural map
\begin{equation}\label{e:iLFH}
	i_{L;X}^{a}:  H^{*}(L) \to HF^{*}(L; X, a)
\end{equation}
given by the isomorphism \eqref{e:CLI} and 
the inclusion of $HF^{*}_{(-a,0]}(L;f)$ into the direct limit where $f \in \cH^{X}$ is
adapted to $L$ with $f > -a$.  

We now have the following definition where $\ide_{L} \in H^{0}(L)$ is the fundamental class.
\begin{defn}
	The \textbf{Lagrangian Floer--Hofer--Wysocki capacity} (relative to $L$) of $X$ is
	\begin{equation}\label{e:cL}
		c^{FHW}_{L}(X) = \inf\{ a > 0 : i_{L;X}^{a}(\ide_{L}) = 0\}
	\end{equation}
	where $c^{FHW}_{L}(X) = +\infty$ if 
	$i_{L;X}^{a}(\ide_{L}) \not= 0$ for all $a > 0$.
\end{defn}

Beyond Corollary~\ref{c:Main}, the other key property of $c_{L}^{FHW}$ we will use is the following lemma,
which follows directly from the definition of the direct limit.
\begin{lem}\label{l:critForLC}
	For any finite $a$, the capacity $c^{FHW}_{L}(X) \leq a$ if and only if there is
	an $f \in \cH^{X}$ adapted to $L$ and an $H \in \cH^{X}$ so that
	$-a < f \leq H$ and
	$$
		\ide_{L} \in \ker\left(\Phi_{fH} : HF^{*}_{(-a, 0]}(L;f) \to HF^{*}_{(-a,0]}(L; H)\right)
	$$
	where $\ide_{L} \in H^{*}(L) \cong HF^{*}_{(-a, 0]}(L;f)$ are identified as in \eqref{e:CLI}.	
\end{lem}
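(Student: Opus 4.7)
The plan is to unpack the two definitions in play, namely the direct-limit definition of $HF^{*}(L;X,a)$ and the infimum definition of $c_{L}^{FHW}(X)$, and show both reduce to the condition that $i_{L;X}^{a}(\ide_{L}) = 0$.

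First I would handle the easy direction $(\Leftarrow)$. Given $f \in \cH^{X}$ adapted to $L$ with $f > -a$, an $H \in \cH^{X}$ with $f \leq H$, and the assumption $\Phi_{fH}(\ide_{L}) = 0$ in $HF^{*}_{(-a,0]}(L;H)$, one observes that the natural map $HF^{*}_{(-a,0]}(L;f) \to HF^{*}(L;X,a)$ into the direct limit factors (up to isomorphism) through $HF^{*}_{(-a,0]}(L;H)$ via $\Phi_{fH}$, by the cofinality of $H$ in the directed system $(\cH^{X},\leq)$ and the naturality \eqref{e:MCMN} of monotone continuation. Since $\ide_{L}$ already vanishes in $HF^{*}_{(-a,0]}(L;H)$, we conclude $i_{L;X}^{a}(\ide_{L}) = 0$, hence $c_{L}^{FHW}(X) \leq a$ by definition \eqref{e:cL}.

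For the $(\Rightarrow)$ direction, assume $c_{L}^{FHW}(X) \leq a$. The first step is to reduce to $i_{L;X}^{a}(\ide_{L}) = 0$. By the infimum definition, there exists $a' \in (0,a]$ (after using monotonicity in $a$, explained below) with $i_{L;X}^{a'}(\ide_{L}) = 0$. The action window maps $HF^{*}_{(-a',0]}(L;f') \to HF^{*}_{(-a,0]}(L;f')$ (which are well defined for any adapted $f'$ with $f' > -a'$) commute with monotone continuation and, under the Morse-theoretic identification \eqref{e:CLI}, send $\ide_{L}$ to $\ide_{L}$; this gives a commutative square relating $i_{L;X}^{a'}$ and $i_{L;X}^{a}$, and in particular shows that the vanishing set $\{a > 0 : i_{L;X}^{a}(\ide_{L}) = 0\}$ is upward closed, allowing us to upgrade to $i_{L;X}^{a}(\ide_{L}) = 0$. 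The second step is the direct-limit unpacking: a class $\alpha \in HF^{*}_{(-a,0]}(L;f)$ is zero in the direct limit $HF^{*}(L;X,a)$ if and only if there exists some $H \in \cH^{X}$ with $H \geq f$ such that $\Phi_{fH}(\alpha) = 0$ in $HF^{*}_{(-a,0]}(L;H)$. Applied to $\alpha = \ide_{L}$, this produces the required $H$ (and we pick $f$ to be any adapted Hamiltonian in $\cH^{X}$ with $f > -a$, which exists because $a > 0$ and we are free to scale the adapted function).

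The main substantive point is really the compatibility between the window maps, the Morse identification \eqref{e:CLI}, and monotone continuation used to verify upward closedness of the vanishing set; everything else is a formal consequence of the definitions. No hard analysis is required, so I do not anticipate a serious obstacle beyond carefully tracking these naturality statements.
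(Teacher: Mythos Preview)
Your approach is correct and matches the paper's, which simply says the lemma ``follows directly from the definition of the direct limit'' without further detail; your unpacking of both directions via the universal property of the direct limit and the compatibility of action window maps with monotone continuation is exactly what is implicit there. One small point: in the $(\Rightarrow)$ direction your claim that ``there exists $a' \in (0,a]$ with $i_{L;X}^{a'}(\ide_{L}) = 0$'' is not literally guaranteed by the infimum when $c_{L}^{FHW}(X) = a$ exactly, since upward closedness alone leaves open whether the vanishing set is $(a,\infty)$ or $[a,\infty)$; however this boundary case is irrelevant for every application in the paper (where the lemma is invoked only with $a$ strictly larger than the capacity), so the argument is adequate as written.
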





\section{Proving Theorem~\ref{t:main} with Lagrangian Floer cohomology} \label{section:mainproof}

For this section let $Q \subset (M^{2n}, d\th)$ be a closed orientable displaceable Lagrangian,
let $g$ be a Riemannian metric on $Q$, and let
\begin{equation}\label{e:REL}
\i: \bB_R^{2n} \to (M,d\th)
\end{equation}
be a symplectic embedding relative to $Q$.  For convenience we will fix a small parametrized Weinstein neighborhood of $Q$
\begin{equation}\label{e:FWN}
	\Psi: \{(q,p) \in T^{*}Q : \abs{p}_{g} < c\} \to M \quad\mbox{whose image we will denote $\cN$}
\end{equation}
and we will allow ourselves to decrease $c$ when we prove Lemma~\ref{l:L} below.
We will assume $\Psi(T^{*}_{\i(0)}Q) \subset \i(i\R^{n})$, that is $\Psi$ takes the cotangent fiber
$T^{*}_{\i(0)}Q$ into the image of the imaginary axis in the ball $\bB^{2n}_{R}$ under $\i$.  Under
our conventions if $\l_{Q} = p\,dq$ is the canonical $1$-form on $T^{*}Q$, then the Hamiltonian flow for
$\tfrac{1}{2}\abs{p}_{g}^{2}$ in $(T^{*}Q, d\l_{Q})$ is the cogeodesic flow.

\subsection{Geodesic paths in $Q$ via an auxiliary Lagrangian $L$}

We will be doing Lagrangian Floer cohomology for an auxiliary admissible Lagrangian $L$ of the following form,
whose existence we will establish in Section~\ref{s:auxLag}.

\begin{lem}\label{l:L}
	There is an admissible Lagrangian $L \subset (M, d\th)$ such that:
	\begin{itemize}
	\item[(i)] $L$ is diffeomorphic to $\R^{n}$ and displaceable from $Q$.
	\item[(ii)] $L$ intersects the ball only along the imaginary axis, i.e.\ $\i^{-1}(L) = i\R^{n} \cap \bB^{2n}_{R}$.
	\item[(iii)] $L$ intersects $Q$ along cotangent fibers in the Weinstein neighborhood $\cN$ of $Q$
	\begin{equation}\label{e:NcL}
		\cN \cap L = \bigcup_{j=0}^{k} \Psi(T^{*}_{q_{j}}Q)
	\end{equation}
	where $Q \cap L = \{q_{0}, q_{1}, \dots, q_{k}\}$ with $q_{0} = \i(0)$ and $k \geq 1$.
\end{itemize}
\end{lem}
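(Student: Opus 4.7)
The plan is to assemble $L$ from standard local models — the imaginary axis disk inside the ball, cotangent fibers at chosen points of $Q$, and Lagrangian $1$-handles joining them — and then to extend the result to a properly embedded Lagrangian via the Liouville cone at infinity.

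First I would pick $k \geq 1$ additional points $q_1, \dots, q_k \in Q \setminus \i(\bB^{2n}_R)$ and set $F_j := \Psi(T^{*}_{q_j}Q)$ for $j=0,\dots,k$, where $q_0 = \i(0)$; after shrinking the constant $c$ in (\ref{e:FWN}) the fibers $F_j$ are pairwise disjoint, and by the assumption $\Psi(T^{*}_{\i(0)}Q) \subset \i(i\R^n)$ the fiber $F_0$ sits inside the imaginary axis disk $\i(i\R^n \cap \bB^{2n}_R)$. The desired local models of $L$ are thus fixed: $L \cap \cN = \bigsqcup_j F_j$ and $L \cap \i(\bB^{2n}_R) = \i(i\R^n \cap \bB^{2n}_R)$, and these agree on the overlap.

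Next I would splice these $k+1$ Lagrangian half-disks together into a single connected Lagrangian half-disk $D \subset \ov{M}$ via $k$ Lagrangian $1$-handles attached outside $\cN \cup \i(\bB^{2n}_R)$. Since an iterated boundary connect sum of $n$-disks is again an $n$-disk, $D$ is diffeomorphic to $D^n$, and the handles can be built in the exact setting so that $D$ is exact and $\th|_D$ admits a primitive supported in $D \cap \ov{M}$. I would further arrange $\d D \subset \d\ov{M}$ to be a Legendrian sphere, and then attach the Liouville cone $\bigcup_{t \geq 0} \vp_{Z_{\th}}^{t}(\d D)$ to obtain $L$, properly embedded in $M$. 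The resulting $L$ is diffeomorphic to $D^n \cup \left([0,\infty) \times S^{n-1}\right) \cong \R^n$, and this extension step is precisely where the global Liouville flow is essential, as the authors emphasize. After modifying $\th$ by $dk$ for some compactly supported $k$, I may assume $\th|_L = 0$ and $k_L = 0$, so $L$ is admissible in the sense of Definition~\ref{d:admissibleLag}.

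For displaceability, note that $L \cap Q = \{q_0, q_1, \dots, q_k\}$ is a finite transverse intersection and $L$ is properly embedded, so a compactly supported Hamiltonian isotopy localized in disjoint small neighborhoods of each $q_j$ suffices to push $L$ off $Q$. The main obstacle is the splicing step: arranging the Lagrangian $1$-handles so that the prescribed local models near the ball and near each $q_j$ are preserved unchanged, while simultaneously maintaining exactness of $L$ globally and Legendrian boundary in $\d\ov M$. This is delicate because the primitive $k_L$ must be extended consistently across each handle, but it is a standard application of Weinstein handle attachment in the exact category once the handles are positioned in the exact region $\ov M \setminus (\cN \cup \i(\bB^{2n}_R))$.
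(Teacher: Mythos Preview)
Your approach is genuinely different from the paper's, and the displaceability step contains a real error. You claim that since $L \cap Q$ is a finite transverse set, a compactly supported Hamiltonian isotopy supported near each $q_j$ suffices to push $L$ off $Q$. This is false: near each $q_j$ the pair $(L,Q)$ looks like $(i\R^n, \R^n)$ in a Darboux ball, and these two linear Lagrangians cannot be Hamiltonian-displaced from one another by a compactly supported isotopy (the local intersection number is $\pm 1$ and is preserved). So any isotopy supported near $q_j$ produces new intersections nearby. Worse, since you choose the extra points $q_1,\dots,q_k$ freely, nothing in your construction prevents the global algebraic intersection $[L]\cdot[Q]$ from being nonzero, in which case $L$ is \emph{not} displaceable from $Q$ at all.

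The paper avoids this entirely by reversing the logic. It first builds, via Lemma~\ref{l:LatI}, an admissible $L \cong \R^n$ living on the cylindrical end, hence disjoint from the compact $Q$; displaceability from $Q$ is then tautological. It then moves this $L$ into position by compactly supported Hamiltonian isotopies (to make $\i^{-1}(L)\cap \bB^{2n}_\e = i\R^n \cap \bB^{2n}_\e$ and then flow by a modified Liouville field to upgrade $\e$ to $R$) --- displaceability from $Q$ persists since $L$ only changes by Hamiltonian isotopy. The condition $k\ge 1$ is not imposed by hand but \emph{deduced}: since $L$ is displaceable from $Q$ the intersection product $[L]\cap[Q]=0$ in $H_0(M)$, and transversality forces $\#(L\cap Q)$ to be even. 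In short, the paper obtains (i) for free from the starting position of $L$ and uses it to force (iii), whereas you try to impose (iii) by hand and then argue (i) locally, which does not work. Your handle-attachment step is also less standard than you suggest (the $F_j$ have boundary in $\partial\cN$, not $\partial\overline{M}$, and Weinstein handle attachment is a cobordism construction, not a construction of Lagrangians inside a fixed Liouville manifold), but the displaceability gap is the decisive one.
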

From now on we will fix such an auxiliary admissible Lagrangian $L \subset (M, d\th)$
diffeomorphic to $\R^{n}$, and we will assume both that $\th|_{L} = 0$ and $k_L = 0$.

\subsubsection{A family of Hamiltonians $\cH_{\cN}^{Q}$ and their chords}\label{s:cHcN}

We will now introduce a special family of Hamiltonians, depicted in Figure~\ref{f:cHcN}, that are specially adapted
to the Weinstein neighborhood $\cN$.

\begin{defn}\label{d:cHcN}
	Define $\cH_{\cN\!,g}^{Q} \subset \cH^{Q}$ to be those admissible Hamiltonians $H: M \to \R$
	that are constant outside of $\cN \subset M$ and inside $\cN$ have the form $H = f_{H}(\abs{p}_{g})$
	for a smooth function $f_{H}: \R \to [-\e_{H}, \infty)$ where $0 < \e_{H} \ll 1$ and for positive constants 
	$i_{H} < \rho_{H} \leq c$
	satisfies the following conditions for $r \geq 0$:
	\begin{enumerate}
	\item[(1)] $f_H'(r) \geq 0$.	
	\item[(2)] $f_{H}(0) = -\e_{H}$, $f_{H}'(0) = 0$, and $f_{H}''(0) > 0$.
	\item[(3)] $f_{H} = M_{H}$ is a positive constant in an open neighborhood of where $r \geq \rho_{H}$.
	\item[(4)] If $r \leq i_H$, then $f_H''(r) \geq 0$.  While
	$f_H''(r) \leq 0$ for $r \geq i_H$.
	\item[(5)] $f_H''(r) \neq 0$ if $f_H'(r) \neq 0$
	and $r \neq  i_H$. 
	\end{enumerate}
	If the particular metric is not important to us we will suppress the $g$ and just write $\cH_{\cN}^{Q}$.	
\end{defn}
We will call $f_{H}'(i_{H})$ the \textbf{slope} of $H$ and we will assume that this is not equal
to the length of a geodesic path in $Q$ connecting points in $Q \cap L$.  
Note that there will be degenerate
constant chords on $L$ where $H \equiv M_{H}$, but since
their action $\cA_{H,L} = M_{H}$ is positive they will
not appear in the complex $CF^{*}_{(-\infty, 0]}(L;H)$.  Henceforth when $H \in \cH_{\cN}^{Q}$ we will only speak of chords $x \in \cC_{L}(H)$ with action $\cA_{H,L}(x) < M_{H}$.
\begin{figure}
\begin{center}
\begin{tikzpicture}[domain=0:4]
   \draw[->] (-0.2,0) -- (8.2,0) node[right] {$r$};
   \draw[->] (0,-1.2) -- (0,5) node[above] {$f_H(r)$};
    \draw (0,-.1) to[out=0,in=225] (3.5,2);
    \draw (3.5,2) to[out=45,in=180] (7,4);
    \draw (7,4) to (8,4);
    \draw (3.5,0.1) -- (3.5,-0.1) node[below] {$i_H$};
    \draw (7,0.1) -- (7,-0.1) node[below] {$\rho_H$};
    \draw (0.1,4) -- (-0.1,4) node[left] {$M_{H}$};
\end{tikzpicture}
\end{center}
\caption{We use Hamiltonians $H(q,p) = f_{H}(\abs{p}_{g})$ in the Weinstein neighborhood $\cN$.}
\label{f:cHcN}
\end{figure}
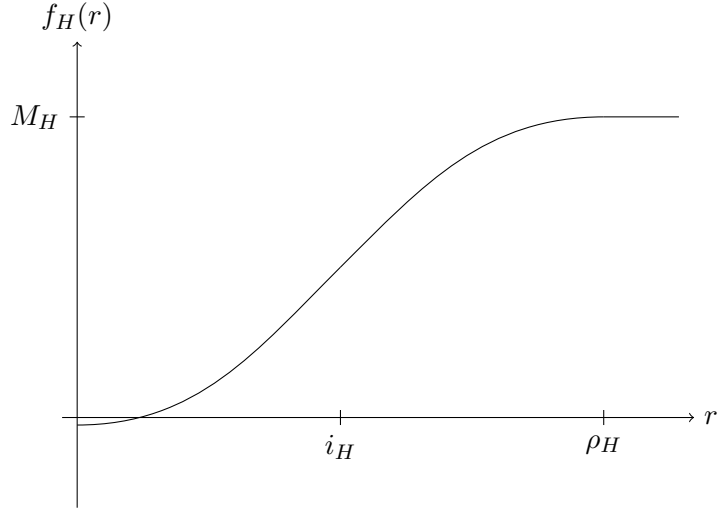

Given $H \in \cH_{\cN\!,g}^{Q}$, its chords $x(t) = (q(t), p(t)) = \vp_{H}^{t}(q(0), p(0))$
are such that $q: [0, 1] \to Q$ are constant speed geodesics with respect to $g$, where 
$\abs{\dot{q}}_{g} = f_{H}'(\abs{p(t)}_{g})$,
with endpoints $q(0), q(1) \in Q \cap L$.
We define the \textbf{cotangent bundle action} of such chords to be
\begin{equation}\label{e:CTBA}
	\cA_{H,L}^{T^{*}Q}(x) = \int_{0}^{1} H(x(t)) dt - \int_{0}^{1} x^{*}\l_{Q}
\end{equation}
and for $x(t) = (q(t), p(t))$ we have the identity
\begin{equation}\label{e:CTBAI}
	\cA_{H,L}^{T^{*}Q}(x) = f_{H}(\abs{p}_{g}) - f_{H}'(\abs{p}_{g})\abs{p}_{g}.
\end{equation}
Equation (\eqref{e:CTBAI}) tells us that the cotangent bundle action of $x$ can be identified with the $y$-intercept
of the tangent line to the graph of $f_H$ at $\abs{p}_{g}$.
Hence it is easy to see that one has the bound
\begin{equation}\label{e:CTBAB}
	\cA_{H,L}^{T^{*}Q}(x) \geq B_{f_{H}}:=  f_{H}(i_{H}) - f_{H}'(i_{H}) i_{H}
\end{equation}
for all chords. Furthermore for a fixed slope $\l$, and constants $M_{H}$
and $\rho_{H}$, the bound $B_{f_{H}}$ can be made arbitrarily close to zero by requiring $f_{H}$ to be
$C^{0}$-close to a piecewise linear function with slope $\l$ near $r=0$ and is the constant $M_{H}$ 
when $r \geq \rho_{H}$.

Any non-constant geodesic path $q : [0,1] \to Q$ with endpoints in $Q \cap L$, with constant speed less
than the slope of $H$, and that is zero in $\pi_{1}(M, L)$, appears exactly twice as a chord:  
Let $\l_{n} < \l_{f}$ be the unique positive numbers
such that $f_{H}'(\l_{n}\abs{\dot{q}}_{g}) = f_{H}'(\l_{f}\abs{\dot{q}}_{g}) = \abs{\dot{q}}_{g}$, then
for $p_{0} = g(\dot{q}(0), \cdot)$
$$
	x_{n}(t) = \vp_{H}^{t}(q(0), \l_{n}p_{0}) \quad\mbox{and}\quad x_{f}(t)= \vp_{H}^{t}(q(0), \l_{f}p_{0})
$$
are both chords for $H$ that represent the geodesic path $q$.  We will call $x_{n}$ the \textbf{near chord} and $x_{f}$ the \textbf{far chord}.
 
The complex
$CF^{*}_{(-\infty, 0]}(L; H)$ is generated by chords of the following type:
\begin{itemize}
\item \textbf{Constant chords}: The points $q_{i} \in Q \cap L$. They have index $\abs{q_{i}}_{\Mas} = 0$ and action $\cA_{H,L}(q_{i}) = -\e_{H}$.
\item \textbf{Near chords}: In the region where $f_{H}'' > 0$ and $f_{H}' > 0$.
\item \textbf{Far chords}: In the region where $f_{H}'' < 0$ and $f_{H}' > 0$.
\end{itemize}
We will also introduce the following dichotomy for chords, illustrated in Figure~\ref{f:LoopPath}.
\begin{itemize}
\item \textbf{Path chords}: Chords whose corresponding geodesic $q$ is such that $q(0) \not= q(1)$.
\item \textbf{Loop chords}: Chords whose corresponding geodesic $q$ is such that $q(0) = q(1)$.
\end{itemize}
This dichotomy will play an important role when we try to control the behavior of differentials in the relatively
embedded ball $\bB^{2n}_{R}$.

\begin{figure}[h]
  \centering
  \def\svgwidth{300pt}
  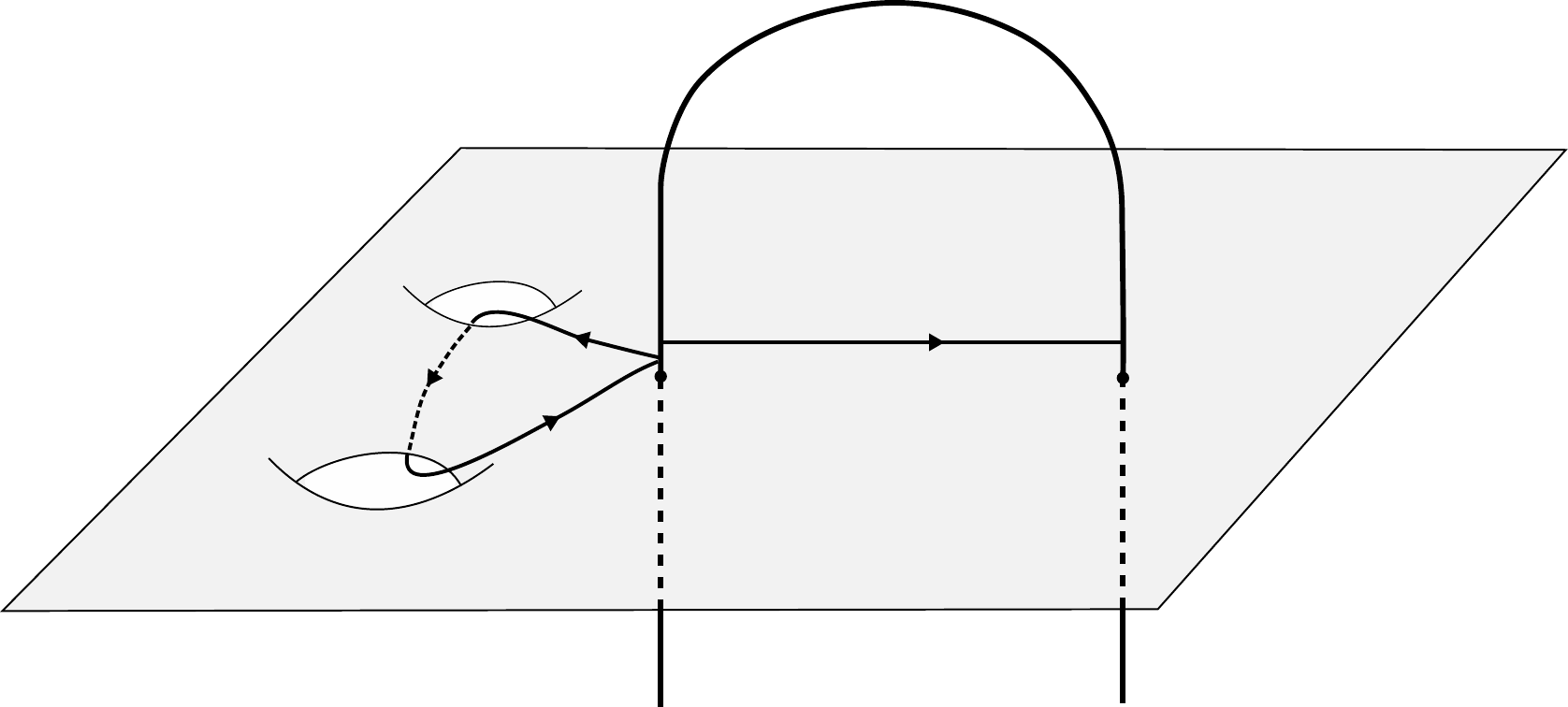
  \caption{A path chord $x$ representing a geodesic from $q_{0}$ to $q_{1}$ and a loop chord
  $y$ representing a geodesic starting and ending at $q_{0}$.}
  \label{f:LoopPath}
\end{figure}

\subsubsection{The Lagrangian capacity $c^{FHW}_{L}(Q)$ and Hamiltonians in $\cH_{\cN}^{Q}$}

By shifting $H \in \cH_{\cN}^{Q}$ up slightly we may always assume that
we can find an $\e > \e_{H}$ small enough so that the constant chords $q_{i} \in Q \cap L$
span $CF^{*}_{(-\e, 0]}(L;H)$.  For action reasons, this means that 
each intersection point $q_{j} \in CF^{*}_{(-\e, 0]}(L;H)$ is a cycle and we have the following
lemma.
\begin{lem}\label{l:inclusionSpecialH}
	Let $f \in \cH^{Q}$ be a $C^{2}$-small Hamiltonian adapted to $L$ such that
	$-\e < f \leq H$.
	For any $a \geq \e$, the monotone continuation map
	$\Phi_{fH}: H^{*}(L) \to HF^{*}_{(-a,0]}(L;H)$
	is such that $\Phi_{fH}(\ide_{L}) = \left[\sum_{j=0}^{k} q_{j}\right]$,
	where we used the identification \eqref{e:1L}.
\end{lem}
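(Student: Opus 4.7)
My plan is to reduce to a specific convenient choice of adapted Hamiltonian $f'$ (replacing the arbitrary $f$ in the statement) for which the representative $\ide_L = [\sum_j q_j]$ is essentially built in, and then compute the resulting continuation map directly via constant continuation solutions.

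First I would construct $f' \in \cH^Q$ adapted to $L$ with $f' \leq f$ (and hence $f' \leq H$), arranged so that $f'|_L$ is a $C^2$-small Morse function whose only critical points below level zero are the intersection points $q_0, \dots, q_k$ (as index-zero minima at value $-\e_H$, matching $H(q_j)$) together with $k$ index-one saddles at slightly larger negative values that glue the sublevel set $\{f'|_L \leq 0\}$ into a contractible deformation retract of $L \cong \R^n$. A direct Morse-theoretic computation then shows that the identification \eqref{e:1L} for $f'$ represents $\ide_L$ by the cycle $[\sum_{j=0}^{k} q_j]$ in $HF^*_{(-a,0]}(L; f')$. Moreover, by the naturality \eqref{e:MCMN} of monotone continuations and the standard fact that the continuation $\Phi_{f'f}$ between two $C^2$-small adapted Hamiltonians preserves $\ide_L$ (both sides being canonically identified with $H^*(L)$ via \eqref{e:CLI}), we have
\[
\Phi_{fH}(\ide_L) = \Phi_{fH}\bigl(\Phi_{f'f}(\ide_L)\bigr) = \Phi_{f'H}\bigl([\textstyle\sum_j q_j]\bigr),
\]
so the lemma reduces to showing $\Phi_{f'H}([\sum_j q_j]) = [\sum_j q_j]$ in $HF^*_{(-a,0]}(L; H)$.

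Next I pass to the small action window $(-\e, 0]$, where by the paragraph preceding the lemma $CF^*_{(-\e, 0]}(L; H)$ is spanned by the constant chords $q_j$ (all at action $-\e_H$) with trivial differential, and the window-enlargement map $\iota_H$ sends $[q_j]$ to $[q_j]$ in the larger window; since $\iota_H$ commutes with continuation, it suffices to prove the identity $\Phi_{f'H}([\sum_j q_j]) = [\sum_j q_j]$ in the small window. Arrange $f' \equiv H$ in small disjoint neighborhoods of each $q_j$ inside the Weinstein neighborhood $\cN$, and choose a monotone homotopy $\{H^s_t\}$ from $H$ at $s = -\infty$ to $f'$ at $s = +\infty$ that keeps each $q_j$ as a zero of $X_{H^s_t}$ for all $s, t$. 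Then each constant map $u(s, t) \equiv q_j$ satisfies the continuation equation \eqref{e:ContMap} and is an isolated, regular element of the relevant moduli space for generic admissible $J$, contributing $q_j$ to $\Phi_{f'H}(q_j)$. Off-diagonal contributions $q_j \to q_{j'}$ with $j \neq j'$ are then ruled out by an energy/locality estimate: any such non-constant trajectory would have energy bounded below by a positive geometric constant depending on the distance between $q_j$ and $q_{j'}$, contradicting the monotone action bound once the $C^0$-norm of $H - f'$ is taken sufficiently small outside the neighborhoods of the $q_j$.

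The main obstacle I anticipate is the final off-diagonal exclusion in the small action window. A cleaner alternative, bypassing the explicit energy argument, is to invoke the standard identification of Floer continuation with Morse-theoretic restriction in the $C^2$-small regime, which directly identifies $\Phi_{f'H}$ on the small window with the cohomological restriction $H^*(\{f'|_L \leq 0\}) = H^*(L) \to H^*(\{H|_L \leq 0\}) \cong \bigoplus_{j=0}^{k} \Z/2$; this restriction sends $\ide_L$ to the sum $\sum_j \ide_{U_{q_j}}$ of the units on the connected components $U_{q_j}$ of $\{H|_L \leq 0\}$, corresponding to $[\sum_j q_j]$ as desired.
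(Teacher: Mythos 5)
Your proposal takes essentially the same approach as the paper's: reduce via naturality of monotone continuations to a convenient adapted Hamiltonian equal to $H$ near each $q_j$, pick a monotone homotopy that is $s$-independent near the $q_j$, and identify $\Phi_{fH}(q_j) = q_j$ on the chain level via the constant solutions. The off-diagonal exclusion you flag as the main obstacle is actually automatic and needs neither a monotonicity-type lower energy bound nor the heavier Floer--Morse identification: since the homotopy is monotone and $\cA_{f,L}(q_j) = -\e_H = \cA_{H,L}(q_{j'})$, the a priori estimate gives $E(u) \leq \cA_{H,L}(q_{j'}) - \cA_{f,L}(q_j) = 0$, so every contributing $u$ is $s$-independent and therefore the constant $q_j$; off-diagonal terms vanish for free.
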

\begin{proof}
	By the naturality of monotone continuation maps it suffices to prove this for a particular $f$,
	so pick $f$ to be such that $f = H$ in a neighborhood of the points $\{q_{0}, \dots, q_{k}\} = Q \cap L$
	and all local minima of $f|_{L}$ have value at least $-\e_{H}$.  If one picks a monotone homotopy
	between $f$ and $H$ that is constant near the points $q_{j}$,
	then $\Phi_{fH}(q_{j}) = q_{j}$ on the chain level
	$\Phi_{fH}: CF^{*}_{(-a, 0]}(L; f) \to CF^{*}_{(-a, 0]}(L; H)$
	since there is only the constant solution for energy and action reasons.
	The result now follows from \eqref{e:1L}.
\end{proof}

With this lemma, the energy-capacity inequality in Corollary~\ref{c:Main} 
and Lemma~\ref{l:critForLC} give the following proposition.
\begin{prop}\label{p:thediff}
	For any finite $a > e(Q; M)$, there is an $H \in \cH^{Q}_{\cN}$ such that
	for any admissible regular 
	$J \in \cJ_{\th}(M)$
	there is a chord $x \in \cC_{H}(L)$ such that
	$\ip{d_{J}x, q_{0}} \not= 0$ in $(CF^{*}(L;H), d_{J})$
	with action $\cA_{H,L}(x) > -a$.  In particular there is a differential
	$$
		u \in \cM(q_{0}, x; L, H, J) \quad\mbox{for the moduli space in \eqref{e:MLFC}}
	$$
	with the energy bound $E(u) \leq a$.  This continues to be true for any
	$H^{+} \in \cH^{Q}_{\cN}$ with $H^{+} \geq H$.	
\end{prop}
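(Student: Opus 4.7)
The plan is to combine the energy--capacity inequality of Corollary~\ref{c:Main} with Lemma~\ref{l:critForLC} and then unpack the resulting vanishing of $\ide_{L}$ on the chain level. Since $e(Q;M) < a$, Corollary~\ref{c:Main} gives $c^{FHW}_{L}(Q) < a$, so Lemma~\ref{l:critForLC} produces an $f \in \cH^{Q}$ adapted to $L$ and some $H_{0} \in \cH^{Q}$ with $-a < f \leq H_{0}$ and $\ide_{L} \in \ker \Phi_{fH_{0}}$.

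Next I would replace $H_{0}$ with a Hamiltonian $H \in \cH^{Q}_{\cN}$. The family $\cH^{Q}_{\cN}$ is cofinal in $\cH^{Q}$: given any $H_{0} \in \cH^{Q}$, one can choose the slope $f_{H}'(i_{H})$, the cutoff $\rho_{H}$, and the plateau value $M_{H}$ of an $H \in \cH^{Q}_{\cN}$ large enough (and $\e_{H}$ small enough) to arrange $H \geq H_{0}$ pointwise. Then by the naturality \eqref{e:MCMN} of monotone continuation maps, $\Phi_{fH} = \Phi_{H_{0}H} \circ \Phi_{fH_{0}}$ still annihilates $\ide_{L}$. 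By Lemma~\ref{l:inclusionSpecialH}, this vanishing reads $\Phi_{fH}(\ide_{L}) = \left[\sum_{j=0}^{k} q_{j}\right] = 0$ in $HF^{*}_{(-a,0]}(L;H)$.

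Therefore there exists $\eta \in CF^{*-1}_{(-a,0]}(L;H)$ with $d_{J}\eta = \sum_{j=0}^{k} q_{j}$ on the chain level. Expanding $\eta = \sum_{x} c_{x}\,x$ over chords with $\cA_{H,L}(x) > -a$ and comparing coefficients of $q_{0}$, one finds $\sum_{x} c_{x}\langle d_{J}x, q_{0}\rangle = 1$, so at least one chord $x$ in the window satisfies $\langle d_{J}x, q_{0}\rangle \neq 0$; this is the required $x$. The nonzero matrix coefficient yields a Floer differential $u \in \cM(q_{0}, x; L, H, J)$, and combining $\cA_{H,L}(q_{0}) = -\e_{H}$ with $\cA_{H,L}(x) > -a$, the energy--action identity \eqref{e:actionbound} gives
$$E(u) = \cA_{H,L}(q_{0}) - \cA_{H,L}(x) = -\e_{H} - \cA_{H,L}(x) < a.$$

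For the final assertion, given any $H^{+} \in \cH^{Q}_{\cN}$ with $H^{+} \geq H$, the naturality \eqref{e:MCMN} yields $\Phi_{fH^{+}}(\ide_{L}) = \Phi_{HH^{+}}\Phi_{fH}(\ide_{L}) = 0$, so the argument above applied to $H^{+}$ produces the desired chord and differential. The only substantive point is the cofinality of $\cH^{Q}_{\cN}$ in $\cH^{Q}$, which is immediate from Definition~\ref{d:cHcN}; everything else is a direct translation of the definition of the Lagrangian Floer--Hofer--Wysocki capacity together with Lemma~\ref{l:inclusionSpecialH}.
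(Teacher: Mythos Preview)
Your proof is correct and follows essentially the same route as the paper's. The paper's argument is terser---it simply asserts that Corollary~\ref{c:Main} and Lemma~\ref{l:critForLC} produce an $H \in \cH^{Q}_{\cN}$ with $\Phi_{fH}(\ide_L)=0$ and then invokes Lemma~\ref{l:inclusionSpecialH}---but it is silently using exactly the cofinality of $\cH^{Q}_{\cN}$ in $\cH^{Q}$ and the chain-level unpacking that you spell out. Your added explanation of the energy bound and of the persistence under $H^{+}\geq H$ via \eqref{e:MCMN} is likewise just making explicit what the paper leaves to the reader.
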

\begin{proof}
By Corollary~\ref{c:Main} and Lemma~\ref{l:critForLC} there is an $H \in \cH^{Q}_{\cN}$
so that $$\Phi_{fH}(\ide_{L}) = 0 \in HF^{0}_{(-a, 0]}(L;H).$$
By Lemma~\ref{l:inclusionSpecialH}, this means the cycle
$\sum_{j=0}^{k} q_{j} \in CF^{*}_{(-a,0]}(L;H)$ is a boundary
and hence there is a chord $x \in CF^{-1}_{(-a, 0]}(L;H)$
with $\ip{d_{J}x, q_{0}} \not= 0$.
\end{proof}

Note that Proposition~\ref{p:thediff} remains true if we insist that the almost complex structure $J$
has a particular form on the ball $\i(\bB^{2n}_{R})$.  This is because no differential is contained entirely in
the ball and hence regularity can still be achieved among such almost complex structures. 
In particular we can assume $J \in \cJ_{\iota}(V)$,
which is defined as follows.
\begin{defn}\label{d:JinBall}
For a subset $V \subset \bB^{2n}_{R}$ define $\cJ_{\iota}(V) \subset \cJ_{\th}(M)$ to be the subset of admissible 
almost complex structures $J$ in Definition~\ref{d:cJ} such that $J|_{\i(V)} = \i_{*}J_{0}$ where $J_{0}$ is
the standard complex structure on $\C^{n}$ and $\i$ is the relative ball embedding \eqref{e:REL}.
\end{defn}

Our goal is to use a differential as in Proposition~\ref{p:thediff} to build a certain holomorphic
curve in the relatively embedded ball in order to prove Theorem~\ref{t:main}.  
It is at this point where we will bring in the assumption that $Q$ has a metric with non-positive sectional curvature
in order to prove the following theorem.  For this theorem let $U$ be any neighborhood of 
$\R^{n} \cap \bB^{2n}_{R}$ and $\cN$ be a displaceable
Weinstein neighborhood of $Q$ of the form \eqref{e:FWN} where $\i^{-1}(\cN) \subset U$.

\begin{thm}\label{t:ExistsPCD}
	Let $g$ be a metric of non-positive curvature on a Lagrangian $Q$ as in Theorem~\ref{t:main}.
	For any finite $a > e(Q;M)$, there is a Hamiltonian $H \in \cH_{\cN\!,g}^{Q}$ and a 
	$J \in \cJ_{\i}(\bB^{2n}_{R} \backslash U)$ such that
	there is an element $u \in \cM(q_{0}, x; L, H, J)$ 
	of the moduli space \eqref{e:MLFC} with energy $E(u) \leq a$ that connects
	$q_{0} = \i(0)$ to a path chord $x \in \cC_{H}(L)$.
\end{thm}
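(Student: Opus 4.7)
My starting point is Proposition~\ref{p:thediff}, which for any $a>e(Q;M)$ furnishes a Hamiltonian $H\in\cH_{\cN\!,g}^{Q}$ with the property that, for every sufficiently regular admissible $J$, there exists a Floer trajectory $u\in\cM(q_{0},x;L,H,J)$ of energy $E(u)\leq a$ ending at some chord $x\in\cC_{H}(L)$ with $\ip{d_{J}x,q_{0}}\not= 0$. As already remarked after Proposition~\ref{p:thediff}, prescribing $J$ to take the form $\i_{*}J_{0}$ on a subset of the ball does not obstruct transversality since no differential is contained entirely in $\i(\bB^{2n}_{R})$; hence we may take $J\in\cJ_{\iota}(\bB^{2n}_{R}\backslash U)$. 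The theorem therefore reduces to arranging that $x$ is a path chord, i.e.\ to excluding loop chords.

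To do so I plan to exploit two additional pieces of structure on $CF^{*}_{(-\infty,0]}(L;H)$. The first is the cotangent bundle action $\cA^{T^{*}Q}_{H,L}$ of \eqref{e:CTBA}, together with its lower bound $B_{f_{H}}$ from \eqref{e:CTBAB}; by sharpening the profile $f_{H}$ so that it is $C^{0}$-close to piecewise linear with fixed slope and constant value $M_{H}$, I may make $B_{f_{H}}$ arbitrarily close to zero. The second is a filtration of $CF^{*}_{(-\infty,0]}(L;H)$ induced by the Liouville class $\th|_{Q}$, constructed in Section~\ref{s:nuf}, which detects when a Floer differential exits the Weinstein neighborhood $\cN$. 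Combining these, and using that $(Q,g)$ has non-positive sectional curvature, the argument of Section~\ref{s:nearchord} would show that the chord $x$ can be taken to lie in the region $f_{H}''>0$: a far chord with $f_{H}''<0$ would produce a cotangent bundle action incompatible with the Liouville-filtration jump forced by the presence of a differential $u$ from $q_{0}$ of energy at most $a$.

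With $x$ now known to be a near chord, I invoke Proposition~\ref{p:indexR}, which for such an $x$ representing a geodesic $q$ in $Q$ expresses the Maslov grading $|x|_{\Mas}$ (with respect to any capping disk $v$) in terms of the Morse index $m_{\O}(q)$ and the Maslov number $\mu_{Q}(v)$. Orientability of $Q$ forces $\mu_{Q}(v)\in 2\Z$, while non-positive sectional curvature of $g$ forces $m_{\O}(q)=0$ for every based geodesic loop $q$. On the other hand the pairing $\ip{d_{J}x,q_{0}}\not=0$ requires $|x|_{\Mas}\equiv|q_{0}|_{\Mas}+1\equiv 1\pmod{2}$, and the index identity shows this parity cannot be attained by a near loop chord. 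Consequently $x$ must be a path chord, which yields Theorem~\ref{t:ExistsPCD}.

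I expect the main obstacle to be the content of the middle step above: establishing that the Liouville filtration correctly records the failure of a Floer trajectory to remain in $\cN$, and then calibrating the profile of $f_{H}$ and the cotangent bundle action bound finely enough to eliminate far chords under the non-positive curvature hypothesis. This is both the novel piece of bookkeeping and the place where the non-positive curvature assumption enters in an essential way; by contrast the index computation in Proposition~\ref{p:indexR} should be a linearized calculation along the lines of Viterbo's scheme \cite{Vi90a} adapted to Lagrangian boundary conditions.
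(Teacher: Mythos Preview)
Your proposal is correct and follows the paper's approach closely. One clarification on the far-chord elimination: the mechanism is not a direct incompatibility between the cotangent bundle action and a Liouville-filtration jump, but rather an iterative replacement (Lemma~\ref{l:lowerorderchord}, using $HF^{*}_{\nu,\gamma}=0$ and the Cartan--Hadamard uniqueness of geodesics in each homotopy class) that produces a new chord $x'$ with $\ip{d_{J}x',q_{0}}\not=0$ and strictly larger $\nu$-value, terminating after finitely many steps by Lemma~\ref{l:nubound}, with Proposition~\ref{p:CTBAUB} and the calibration of $B_{f_{H}}$ ensuring the action remains above $-a$ throughout.
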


The main content here is that we can use 
the non-positive curvature assumption to strengthen the conclusion of 
Proposition~\ref{p:thediff} so that the given differential involves a path chord $x$.  
The fact that we can take $x$ to be a path chord plays a key role in our proof of 
Theorem~\ref{t:main}.
We will prove Theorem~\ref{t:ExistsPCD} in Section~\ref{s:pathchordexistence}.

\subsection{Using Floer differentials to prove Theorem~\ref{t:main}}\label{ss:limit}

We will now use Theorem~\ref{t:ExistsPCD} to prove Theorem~\ref{t:main} as outlined in Section~\ref{ss:CompIdea}.
In what follows the interior of a set $Y \subset \R^{2n}$ will be denoted $\mathring{Y}$.

\begin{proof}[Proof of Theorem~\ref{t:main}]
Without loss of generality we may assume we have a relatively embedded ball
$\bB^{2n}_{R+\epsilon}$ for some very small $\e>0$ so that the preimage of our Lagrangians under $\iota$ are still linear.
Consider the sequence of neighborhoods 
$$
U_{k} = \left\{z \in \bB^{2n}_{R+\e} : \sum_{i=1}^{n} \abs{\text{Im}(z_{i})} < 1/k \right\} \quad
\mbox{of $\R^{n} \cap \bB^{2n}_{R+\e}$}
$$
and corresponding Weinstein neighborhoods $\cN_{k}$, Hamiltonians $H_{k}$, almost
complex structures $J_{k}$, path chords $x_{k}$, and differentials $u_{k}$ given by Theorem~\ref{t:ExistsPCD}. 
For any $e > e(Q;M)$, we may assume $E(u_{k}) \leq e$ for all $k$.

Observe that if $x \in \cC_{H}(L)$ is a path chord, then $\{x(0), x(1)\} \not\subset \iota(\bB^{2n}_{R+\e})$ since otherwise 
for $x(t) = (q(t), p(t))$ one has $q(0) = q_{0} = q(1)$.  Therefore by the boundary conditions
on elements in the moduli space $\cM(q_{0}, x_{k}; L, H_{k}, J_{k})$ it follows that the image of 
$u_{k}(\R \times \{0,1\})$ contains
a path $\gamma_{k}$ in $L \cap \i(\bB^{2n}_{R+\e})$ from $q_{0}$ to a point on $\i(\d\bB_{R+\e}^{2n})$,
see Figure~\ref{f:boundaryC}.
\begin{figure}[h]
  \centering
  \def\svgwidth{300pt}
  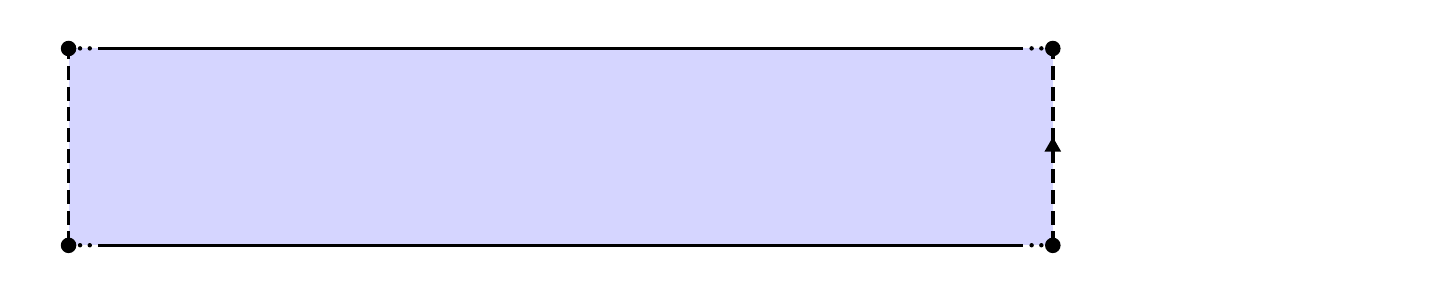
  \caption{
 If $x(1) \notin \iota(\bB^{2n}_{R+\e})$, then $u(\cdot,1): [-\infty, b] \to L \cap \i(\bB^{2n}_{R+\e})$ 
  is a path from $q_{0}$ to a point on $\iota(\d\bB^{2n}_{R+\e})$ for some $b \in \R$.}
  \label{f:boundaryC}
\end{figure}

From now on we will focus on $\iota(\bB^{2n}_{R+\e})$, so by composing with $\iota^{-1}$ we will view $u_{k}$ as a map to 
$\bB^{2n}_{R+\e}$ and $\g_{k}$ as a path in $i\R^{n} \cap \bB^{2n}_{R+\e}$ from $0$ to the boundary.
Pick a sequence $V_k \subset \bB^{2n}_{R+\e} \setminus U_{k}$ of compact codimension $0$ submanifolds such that
$$
\bigcup_{k=1}^{\infty} V_k = \bB^{2n}_{R+\e} \setminus \R^n \quad\mbox{and}\quad
V_{k-1} \cap \mathring{\bB}^{2n}_{R+\e} \subset \mathring{V}_k.
$$
If we choose $V_k$ so that $i\R^{n} \cap \d\bB_{R+\e}^{2n} \subset V_{k}$, then for each $j,k \geq 0$
by the intermediate value theorem we have
$\gamma_{k} \cap \partial V_j$ is non-empty.
For each $j \leq k$, pick a point $l_{j,k} \in \gamma_{k} \cap \partial V_j$ with minimum distance to $0$
and we have $l_{j,k} \to 0$ uniformly as $j \to \infty$.
\begin{figure}[h]
  \centering
  \def\svgwidth{400pt}
  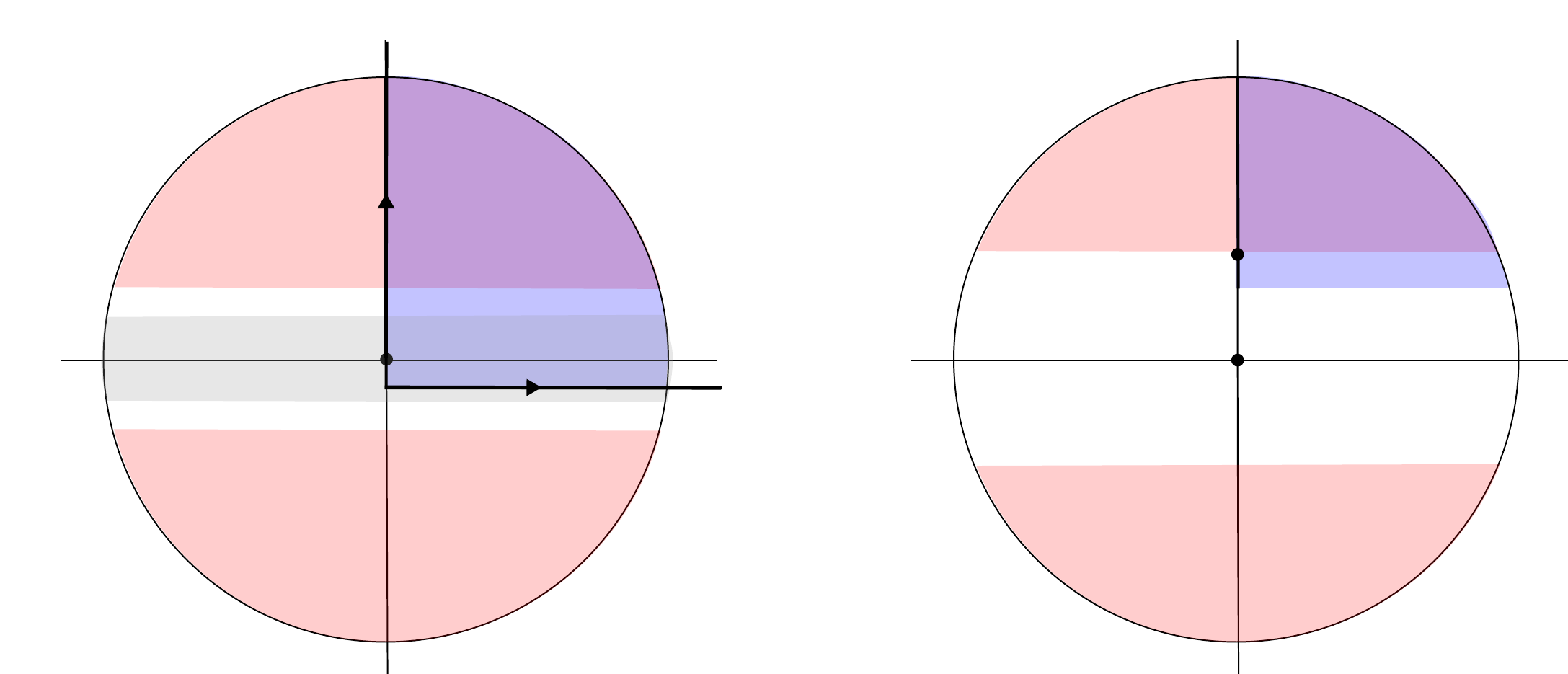
  \caption{The sets $V_{k}$ are in red and the set $U_{k}$ is in grey.
  Left: The setting of the proof of Theorem~\ref{t:main}.  The image of the differential $u_{k}$ is in blue
  and the $J_{0}$-holomorphic curve $v_{k}: \Si_{k} \to V_{k}$ is where $u_{k}$ maps to $V_{k}$.
  Right: The setting of Lemma~\ref{l:existenceofproperJholmap} where the image of the $J_{0}$-holomorphic
  curve $v_{k}$ is in blue.}
  \label{f:FirstLimit}
\end{figure}
Because
$\supp(dH_{k}) \subset \cN_{k}$ and $J_{k} \in \cJ_{\i}(\bB_{R+\e}^{2n}\backslash U_{k})$
it follows that the part of the differential $u_{k}$ that is 
in $\mathring{V}_k$ can be seen as a proper holomorphic curve
$$
	v_{k} = u_{k}|_{u_{k}^{-1}(\mathring{V}_k)}: \Sigma_{v_{k}} \to 
	(\mathring{V}_k, J_{0})
$$
with energy $E(v_{k}) \leq e$.
See Figure~\ref{f:FirstLimit} for a schematic drawing.

By Lemma \ref{l:existenceofproperJholmap} below, using our holomorphic maps $v_k$ 
we get a proper holomorphic map 
$$v: \Sigma \to (\mathring{\bB}^{2n}_{R}, J_{0})$$ 
passing through $0$ with energy $E(v) \leq 4e$, where $\Sigma$ is a Riemann surface without boundary. 
By the standard monotonicity estimate (e.g.\ \cite[Section 4.3]{Si94}), 
the holomorphic curve $v$ has energy at least $R \leq E(v)$ and hence 
$R \leq 4e$.  Since this holds for all $e > e(Q; M)$ and $R < w(Q;M)$, Theorem~\ref{t:main} follows.
\end{proof}

\begin{lem}\label{l:existenceofproperJholmap}
Let $V_{k} \subset \bB^{2n}_{R}$ be a sequence of compact codimension $0$ submanifolds with boundary
with the property that $V_k \cap \mathring{\bB}^{2n}_{R} \subset \mathring{V}_{k+1}$ and 
$\cup_k V_k = \bB^{2n}_{R} \setminus \R^n$.
Let $v_k : \Sigma_k \to (\mathring{V}_k, J_{0})$ be a sequence of proper holomorphic maps from genus $0$ Riemann surfaces with uniform energy bound $E(v_{k}) \leq e$.
Suppose also that $\d\Sigma_k$ gets mapped to $i\R^n \subset \bB^{2n}_{R}$
and that for all $j < k$ there exists $l_{j,k} \in \text{image}(v_{k}|_{\d\Sigma_{k}}) \cap \partial V_{j}$ 
with $l_{j,k} \to 0$ uniformly as $j \to \infty$.
It then follows there is a proper holomorphic map $v : \Sigma \to (\mathring{\bB}^{2n}_{R}, J_{0})$
with energy $E(v) \leq 4e$ passing through $0$ where $\Sigma$ is a Riemann surface without boundary.
\end{lem}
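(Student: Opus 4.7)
The plan is to convert the sequence $\{v_k\}$ into a sequence of proper holomorphic maps \emph{without boundary} via Schwarz reflection, then extract a Gromov limit passing through $0$.

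\textbf{Step 1 (Schwarz reflection across $i\R^n$).} Consider the anti-holomorphic involution $\sigma\co\C^n\to\C^n$, $\sigma(z)=-\bar z$, which fixes $i\R^n$ pointwise and preserves $\mathring{\bB}^{2n}_R$. Since $\d\Sigma_k$ maps to $i\R^n$, I would double each $v_k$ across its boundary to obtain a $J_0$-holomorphic map
$$\tilde v_k\co\tilde\Sigma_k\to(\mathring{V}_k\cup\sigma(\mathring V_k),J_0)\subset\mathring{\bB}^{2n}_R$$
from a boundaryless Riemann surface $\tilde\Sigma_k$. This $\tilde v_k$ is still proper as a map into the open set $V_k\cup\sigma(V_k)$, and a direct calculation with $|du|^2$ shows $E(\tilde v_k)=2E(v_k)\le 2e$. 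The points $l_{j,k}$, lying in $i\R^n\cap\d V_j$, remain as distinguished marked points in the image of $\tilde v_k$ for every $j<k$.

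\textbf{Step 2 (Gromov/Fish compactness on an exhaustion).} Fix a compact exhaustion $K_m\nearrow\mathring{\bB}^{2n}_R$. For each $m$, the preimage $\tilde v_k^{-1}(K_m)$ is a compact subsurface of $\tilde\Sigma_k$ and the restriction of $\tilde v_k$ has energy bounded by $2e$. Applying Gromov compactness (or Fish's version \cite{Fi11}) on each $K_m$ and then diagonalizing over $m$ would produce a limit proper $J_0$-holomorphic map $v\co\Sigma\to(\mathring{\bB}^{2n}_R,J_0)$, where $\Sigma$ is a Riemann surface without boundary (possibly nodal, possibly with several components after passing to the component containing $0$). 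The energy bound $E(v)\le 4e$ should follow from $E(\tilde v_k)\le 2e$ together with the doubling/multi-cover slack that Gromov compactness permits when passing to the connected component through $0$.

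\textbf{Step 3 ($0$ lies in the image).} This is where the hypothesis $l_{j,k}\to 0$ enters. I would argue that after a further diagonal extraction, one can arrange marked points $l_{j,k_j}\to 0$ that survive the compactness procedure. Since the limit map $v$ is proper and its image is closed in $\mathring{\bB}^{2n}_R$, the accumulation point $0$ (if it lies in $\mathring{\bB}^{2n}_R$ and not on $\d\bB^{2n}_R$, which it does not) must belong to $v(\Sigma)$; any bubble forming at $0$ would itself be the desired $v$.

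The main obstacle will be Step 3: in the Gromov-style compactness one usually requires fixed marked points or a uniformly chosen marked point in the domain to guarantee a specified point is in the limit image. Here the points near $0$ are on moving boundaries $\d V_j$ of the reflected domain, and there is no a priori bound keeping them away from the escape locus $\d\bB^{2n}_R$. The delicate part is to run the diagonal argument jointly over the exhaustion $K_m$ and the index $j$ so that some sequence of marked points is captured in the limit and to rule out energy loss at $0$ that would prevent $0$ from being in the image. Dealing with potential sphere bubbling at $0$ (and selecting the bubble as the component we keep) is the mechanism that will ultimately force $0\in v(\Sigma)$.
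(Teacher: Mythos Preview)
Your Steps~1 and~2 are in the right spirit and match the paper: reflect across $i\R^n$ to remove the boundary, then apply Fish's target-local compactness on an exhaustion. But Step~3 has a genuine gap, and it is not the one you flag. The reflected curves $\tilde v_k$ take values in $\mathring V_k\cup\sigma(\mathring V_k)\subset \bB^{2n}_R\setminus\R^n$, since $\sigma(z)=-\bar z$ preserves $\R^n$. Hence the correct exhaustion is of $\bB^{2n}_R\setminus\R^n$, not of $\mathring\bB^{2n}_R$ (otherwise $\tilde v_k^{-1}(K_m)$ need not be compact), and any limit you extract is a proper holomorphic map into $\mathring\bB^{2n}_R\setminus\R^n$. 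Since $0\in\R^n$, the point $0$ can never lie in the image of such a limit; the hypothesis on $l_{j,k}$ only tells you $0$ is in the \emph{closure} of the limit image. No bubble can form at $0$ either, because no curve in the sequence passes through $0$.

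The paper closes this gap with two ingredients you are missing. After extracting the limit as a $1$-dimensional analytic set $A\subset\bB^{2n}_R\setminus\R^n$ of area at most $2e$ whose closure contains $0$, one reflects a \emph{second} time, now across $\R^n$ via complex conjugation, to obtain the conjugation-invariant analytic set $A\cup\overline A$ of area at most $4e$. Then Alexander's continuation theorem \cite{Al71} for $1$-dimensional analytic sets shows that the closure $X$ of $A\cup\overline A$ in $\bB^{2n}_R$ is again an analytic subvariety; now $0\in X$ honestly, and the normalization $\widetilde X\to X$ is the desired proper holomorphic map through $0$. Note in particular that the factor $4$ in the energy bound comes from the two successive doublings (across $i\R^n$ and then across $\R^n$), not from any slack in Gromov compactness.
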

\begin{proof}[Proof of Lemma \ref{l:existenceofproperJholmap}]
Let $\sigma : \bB^{2n}_{R} \to \bB^{2n}_{R}$ be the map sending each complex coordinate $x+iy$
to $-x + iy$. We replace $V_k$ with an appropriate smoothing of $V_k \cap \sigma(V_k)$ and restrict our holomorphic curves to this smaller manifold so that $V_k$ is invariant under the action of $\sigma$.
By the Schwarz reflection principle we can reflect $v_k$ along $i \R^n$
via $\sigma$ and create a new proper holomorphic map $v_k : \Sigma_k \to \mathring{V}_k$
where $\Sigma_k$ is an open Riemann surface without boundary 
and $E(v_{k}) \leq 2e$.
We can assume that the boundaries of $V_j$ are generic enough so that
$v_k$ is transverse to $\partial V_j$ for $j < k$
and $\Sigma_{k,j} := v_k^{-1}(V_j)$ is a compact submanifold with boundary for $j < k$.

Fish's compactness result \cite[Theorem A]{Fi11} tells us that for fixed $j$ there is
\begin{enumerate}
\item[(1)] a compact Riemann surface $S_{j}$ with boundary and 
a compact nodal Riemann surface $S_{j}'$ with boundary with a surjective continuous
map $\phi_j : S_j \to S'_j$,
\item[(2)] smooth embeddings $\phi_{k,j} : S_j \to \Sigma_{k,j}$ such that $v_{k} \circ \phi_{k,j}(\d S_{j}) \subset V_{j}\backslash V_{j-1}$,
\item[(3)] a $J_{0}$-holomorphic map  $u_j : S'_j \to V_{j}$ with energy at most $2e$ 
where $u_{j}(\d S_{j}') \subset V_{j}\backslash V_{j-1}$,
\end{enumerate}
and a subsequence of $\{v_{k}|_{\Sigma_{k,j}}\}_{k}$ such that
$$
	\mbox{$v_k \circ \phi_{k,j}: S_{j} \to V_{j}$ converges $C^{0}$-uniformly to $u_j \circ \phi_j: S_{j} \to V_{j}$.}
$$
By a diagonal process of successive subsequences we can ensure that the image of $u_{j}$ is contained in the image of $u_{j+1}$, and let $A$ be the union of these images noting that its energy is bounded by $2e$.
We also have $l_{j,k}$ converges to some point $l_j$ in $\d V_j$ as $k \to \infty$ and that $l_j \to 0$
because $l_{j,k} \to 0$ uniformly as $j \to \infty$. Hence the closure of $A$ contains $0$ because $l_j$ is in the image of $u_{j}$.
The union of $A$ with its complex conjugate
$A \cup \overline{A}$
is a closed analytic subvariety of complex dimension $1$ in $\bB^{2n}_{R} \setminus \R^n \subset \C^n$
which is invariant under complex conjugation.
If $X$ is the closure of $A \cup \overline{A}$ inside $\bB^{2n}_{R}$, then by
the main theorem in \cite{Al71} $X$ is a closed analytic subvariety of $\bB^{2n}_{R}$.
Let $v : \widetilde{X} \to X \subset \bB^{2n}_{R}$ be the normalization of $X$, this is a proper holomorphic map from a Riemann surface $\Sigma := \widetilde{X}$ with boundary to 
$\bB^{2n}_{R}$ of energy at most $4e$ passing through $0$ such that $v(\d\Sigma) \subset \d\bB^{2n}_{R}$.
\end{proof}
See \cite[Chapter 8]{GrRe84} for background on normalization for analytic spaces and \cite[Chapter 6.5]{GrRe84} for the proof that $1$-dimensional normal complex spaces are Riemann surfaces.

\subsection{Building the auxiliary Lagrangian $L$} \label{s:auxLag}

We will now present the construction of the auxiliary Lagrangian $L$ from Lemma~\ref{l:L}.
As a first step we have a local construction of a Lagrangian $\R^{n}$ on the cylindrical end.

\begin{lem}\label{l:LatI}
	If $(Y^{2n-1}, \xi)$ is a closed contact manifold with contact form $\a$, then
	there is a properly embedded Lagrangian $L$ in
	$([1,\infty) \times Y, d(r\a))$ diffeomorphic to $\R^{n}$ with $(r\a)|_{L} = dh_{L}$
	for a smooth compactly supported $h_{L}: L \to \R$.
\end{lem}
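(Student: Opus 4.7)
The plan is to construct $L$ by attaching a Lagrangian cap to a Lagrangian half-cylinder over a compact Legendrian in $Y$. Since $Y$ is a closed contact manifold, inside any Darboux chart of $Y$ we can find a Legendrian submanifold $\Lambda \subset Y$ diffeomorphic to $S^{n-1}$ (the standard Legendrian unknot, for $n \geq 2$; for $n = 1$ we take $\Lambda$ to be two distinct points). Pick a constant $R_{0} > 1$ and set $C := [R_{0}, \infty) \times \Lambda$. Since $\a|_{\Lambda} = 0$ we have $(r\a)|_{C} \equiv 0$, so $C$ is a properly embedded Lagrangian half-cylinder on which any smooth primitive of $(r\a)|_{C}$ is locally constant.

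Next I would produce a smooth Lagrangian cap $D \subset [1, \infty) \times Y$ diffeomorphic to $\bB^{n}$ with $\d D = \{R_{0}\} \times \Lambda$, meeting $C$ to infinite order along the common boundary so that $L := C \cup D$ is a smooth Lagrangian submanifold. For this I would invoke the Weinstein isotropic neighborhood theorem: a neighborhood of the isotropic sphere $\{R_{0}\} \times \Lambda$ in $[1, \infty) \times Y$ is symplectomorphic to a standard local model, equivalently an open subset of $\C^{n}$ via a Darboux chart, in which the standard Legendrian unknot bounds the standard Lagrangian disk $\R^{n} \cap \overline{\bB^{2n}_{\rho}}$. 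Transporting this filling back and arranging that it extends $C$ smoothly yields the required $D$.

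The resulting $L$ is properly embedded in $[1, \infty) \times Y$, since $D$ is compact and the cylindrical end of $C$ escapes to $r = \infty$, and it is diffeomorphic to $\bB^{n} \cup_{S^{n-1}} [0, \infty) \times S^{n-1} \cong \R^{n}$. To produce the primitive $h_{L}$, observe that $(r\a)|_{L}$ is compactly supported (it vanishes on $C$ by construction) and closed (since $L$ is Lagrangian), hence exact on the simply connected $L$; the unique primitive vanishing on $C$ is the desired $h_{L}$, smooth by smoothness of $L$ and compactly supported since it vanishes on the cylindrical end.

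The main technical obstacle is arranging the cap $D$ so that the gluing is smooth and the primitive $h_{L}$ is compactly supported; smoothness of the gluing is handled by the Weinstein model identification together with a careful choice of the standard filling matching $C$ to infinite order at $\d D$. In the case $n = 1$ there is the additional wrinkle that $\d D$ has two components, so the primitive must agree at both ends of $L \cong \R$; this forces the condition $\int_{D} r\a = 0$, which is arranged by choosing the Lagrangian arc filling $\Lambda$ symmetrically to enclose zero signed area relative to the zero section.
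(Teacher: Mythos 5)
Your plan---build $L$ from the model $\R^n\subset\C^n$, which outside a ball is the Lagrangian cylinder over the standard Legendrian $\R^n\cap S^{2n-1}$, and import it via a contact Darboux chart of $Y$---is the same idea that underlies the paper's proof. However, there is a genuine gap in the step that supplies the cap $D$. You claim a Weinstein neighborhood of the isotropic sphere $\{R_0\}\times\Lambda$ in $[1,\infty)\times Y$ is an open subset of $\C^n$ containing the standard Lagrangian filling disk $\R^n\cap\overline{\bB^{2n}_\rho}$. That cannot be: a tubular neighborhood $N$ of an embedded $S^{n-1}$ deformation retracts onto it, so the core sphere generates $\pi_{n-1}(N)$ and cannot bound an $n$-disk inside $N$ (for $n\geq2$; for $n=1$ the neighborhood of two points is a pair of disjoint disks and cannot contain an arc joining them). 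Concretely, the flat Lagrangian disk in $\C^n$ reaches the origin, far outside any tubular neighborhood of the Legendrian sphere at radius $\sqrt{R_0}$. The same obstruction persists if one replaces the tubular neighborhood with the symplectization of a Darboux chart $U\subset Y$: that is exact-symplectomorphic to a truncated cone $\{z : |z|\geq\sqrt{m_f},\ z/|z|\in V\}\subset\C^n$, and the flat disk escapes through the truncation at $|z|=\sqrt{m_f}$.

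The paper sidesteps the local filling problem entirely. After constructing the exact symplectic embedding $\Psi$ of the truncated cone $[m_f,\infty)\times V$ into $[1,\infty)\times Y$, it identifies the domain with a conical region of $\C^n$ containing the end of $\R^n$, and then applies a compactly supported Hamiltonian diffeomorphism of $(\C^n,d\th_0)$ to move all of $\R^n$---cap included---into that region before transporting by $\Psi$. Since one deforms a globally smooth exact Lagrangian by a Hamiltonian isotopy, there is nothing to glue (hence no need for infinite-order matching), properness and exactness are preserved, and a compactly supported primitive exists because the construction fixes $\R^n$ near infinity. Your remark about the $n=1$ period obstruction is a real issue for any glue-a-cap approach, but it evaporates in the global picture. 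To repair your argument, replace the local filling with this Hamiltonian push.
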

\begin{proof}
For $\th_{0} = \tfrac{1}{2}\sum_{i=1}^{n} x_{i}dy_{i} - y_{i}dx_{i}$ in $\C^{n}$ consider the
standard contact structure $(S^{2n-1}, \xi_{0})$ with $\xi = \ker \a_{0}$ where $\a_{0} = \th_{0}|_{S^{2n-1}}$
and the exact symplectic embedding
$$
	\Phi: ((0, \infty)\times S^{2n-1}, d(r\a_{0})) \to (\C^{n}, d\th_{0}) \quad\mbox{by}\quad (r,x) \mapsto rx.
$$
By the contact Darboux theorem for a sufficiently small open set $U \subset Y^{2n-1}$ there is an open set 
$V \subset (S^{2n-1}, \xi_{0})$ 
containing $\R^{n} \cap S^{2n-1}$ and a contactomorphism 
$\psi: (V, \xi_{0}) \to (U, \xi)$ such that $\psi^{*}\a = f\, \a_{0}$ for some $f: V \to (0,\infty)$.
By shrinking $U$ and $V$ slightly we can assume that $f \geq m_{f} > 0$ where $m_{f}$ is a constant and define
the exact symplectic embedding
\[ 
\Psi : ([m_{f},\infty) \times V, d(r\a_{0})) \to ([1,\infty) \times U, d(r\a)) \quad\mbox{by}\quad
\Psi(r,x) = \left(\tfrac{r}{f(x)}, \psi(x)\right).
\]
Since $V \subset S^{2n-1}$ contains $\R^{n} \cap S^{2n-1}$,
we can use a compactly supported Hamiltonian diffeomorphism in $(\C^{n}, d\th_{0})$
to move the Lagrangian $\R^{n} \subset \C^{n}$ into the image $\Phi([m_{h},\infty) \times V) \subset \C^{n}$. 
The image of this new Lagrangian under $\Psi$ in $([m_{f},\infty) \times V, d(r\alpha_{0}))$ is our desired Lagrangian.
\end{proof}

We can now prove Lemma~\ref{l:L}.

\begin{proof}[Proof of Lemma~\ref{l:L}]
It follows from Lemma~\ref{l:LatI} that any Liouville manifold $(M, d\th)$ 
contains an admissible Lagrangian $L$ diffeomorphic to $\R^{n}$.
By an appropriate compactly supported Hamiltonian diffeomorphism of $(M, d\th)$ we can
assume for some $\e > 0$ that the Lagrangian $L$ is such that
$$
	\i(0) \in Q \cap L \quad\mbox{with}\quad \i^{-1}(L) \cap \bB^{2n}_{\e} = i\R^{n} \cap \bB^{2n}_{\e}.
$$
If we modify $\th$ to $\th'$ by adding a compactly supported exact $1$-form so that
$\i^{*}\th' = \th_{0}$ in $\bB^{2n}_{R}$, then 
the Liouville vector field $X_{\th'}$ will have the form $X_{\th'} = \tfrac{1}{2}\sum_{i=1}^{n} x_{i}\d_{x_{i}} + y_{i}\d_{y_{i}}$ in the
ball $\i(\bB^{2n}_{R})$.  Flowing $L$ along $X_{\th'}$ gives a new Lagrangian $L$ such that 
$\i^{-1}(L) = i\R^{n} \cap \bB^{2n}_{R}$.  By applying another compactly supported Hamiltonian symplectomorphism to $L$ we can get a new Lagrangian $L$ such that $\i^{-1}(L) = i\R^{n} \cap \bB^{2n}_{R}$
still holds and for some $c > 0$ sufficiently small in the Weinstein neighborhood \eqref{e:FWN}
$$
	\Psi^{-1}(L) \cap \{\abs{p}_{g} < c\} = \bigcup_{j=0}^{k} T^{*}_{q_{j}}Q \cap \{\abs{p}_{g} < c\}
$$ 
where $q_{0} = \i(0)$ and $\{q_{0}, \dots, q_{k}\} = Q \cap L$.  

To show that $Q$ and $L$ do not only intersect at $\i(0)$, we will show that they intersect an
even number of times.
By construction $L$ can be made disjoint from $Q$
by a Hamiltonian isotopy, so it follows that under the intersection product
$$
	\cap: H_{n}^{\rm{lf}}(M) \otimes H_{n}(M) \to H_{0}(M) \quad\mbox{that}\quad [L] \cap [Q] = 0.
$$
Here $H_{*}^{\rm{lf}}(M)$ is locally-finite homology (also known as Borel-Moore homology), which is
Poincare dual to cohomology with compact support.  Since $L$ and $Q$ are transverse,
it follows from $[L] \cap [Q] = 0$ that $L$ and $Q$ intersect an even number of times.
\end{proof}

\section{Existence of a differential from a path chord} \label{s:pathchordexistence}

In this section we will show how to strengthen Proposition~\ref{p:thediff} to  
Theorem~\ref{t:ExistsPCD}.  The non-trivial part here is to prove that the chord $x \in CF^{*}(L;H)$
given by Proposition~\ref{p:thediff} with $\ip{d_{J}x, q_{0}} \not=0$ and $\cA_{H,L}(x) \geq - a$
can actually be taken to be a path chord.  

This proof has four parts.  In Section~\ref{s:nuf} we will introduce a filtration on $CF^{*}(L;H)$ given
by the Liouville class $\th|_{Q}$ in $Q$.  In Section~\ref{s:CBAB} we will use this filtration to find an 
upper bound for the cotangent bundle action 
$\cA^{T^{*}Q}_{H,L}(x)$
from \eqref{e:CTBA} for chords satisfying the conclusion of Proposition~\ref{p:thediff}.
In Section~\ref{s:nearchord} we will use the assumption that $Q$ has a metric $g$ with non-positive curvature,
along with the bound on the cotangent bundle action and the index relation in Proposition~\ref{p:indexR}, to prove
$x$ can be taken to be a near path chord.  Finally in Section~\ref{s:indexnear} we prove 
the required index relation of Proposition~\ref{p:indexR}.  Section \ref{s:nearchord} is the only
place in the paper where the assumption that $Q$ admits a metric with non-positive curvature is used.

\subsection{The Liouville-filtration}\label{s:nuf}

Recall our fixed admissible Lagrangian $L$ from Lemma~\ref{l:L}
and our Liouville $1$-form $\th$ on $M^{2n}$
such that $\th|_{L} = 0$.  

\subsubsection{The Liouville-filtration}\label{s:nu}

For an admissible Hamiltonian 
$H \in \cH_{\cN}^{Q}$
and a chord $x \in \cC_{H}(L)$ with $x(t) = (q(t), p(t))$ in coordinates for $T^{*}Q$,
denote the integral along the corresponding geodesic of the (negative of the) Liouville class
$\th|_{Q}$ of $Q$ by
\begin{equation}\label{e:nu}
	\nu(x) := -\int_{0}^{1} q^{*}(\th|_{Q})
\end{equation}
For our purposes it will be helpful to have the following alternative description of $\nu$.
Fix a neighborhood $\cN(L)$ of $L$ such that $\cN(L) \cap \cN$ deformation
retracts onto $L \cap Q$ and let
$\cN_{L \cup Q} = \cN(L) \cup \cN$, where $\cN$ is the Weinstein neighborhood of $Q$ in Lemma~\ref{l:L}.
By shrinking $\cN_{L \cup Q}$ slightly we may assume it has
a smooth boundary.
\begin{lem}\label{l:NuEta}
	There is closed $1$-form $\eta$ defined on $\cN_{L \cup Q} \subset (M, d\th)$
	such that $\eta|_{L} = 0$,
	\begin{equation}\label{e:nu=eta}
		\nu(x) = -\int_{0}^{1}x^{*}\eta	\quad\mbox{for chords $x \in \cC_{H}(L)$}\,,
	\end{equation}
	and $\eta = \th - \Psi_{*}\l_{Q}$ in $\cN$
	where $\Psi$ is from \eqref{e:FWN} and $\l_{Q}$ is the canonical $1$-form.
\end{lem}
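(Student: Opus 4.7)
My plan is to build $\eta$ first on the Weinstein neighborhood $\cN$ by the prescribed formula, to prove the integral identity there by a Stokes-type computation on a natural radial strip that connects the chord to its underlying geodesic, and then to extend $\eta$ across $\cN(L)$ using the contractibility of each component of the overlap $\cN(L)\cap\cN$.

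On $\cN$ I would set $\eta_{\cN} := \th - \Psi_{*}\l_{Q}$. Because $\Psi$ is a symplectomorphism one has $d(\Psi_{*}\l_{Q}) = \Psi_{*}d\l_{Q} = \w = d\th$, so $\eta_{\cN}$ is closed. Lemma~\ref{l:L}(iii) identifies $L\cap\cN = \bigcup_{j=0}^{k}\Psi(T^{*}_{q_{j}}Q)$ as a union of cotangent fibers; the canonical form $\l_{Q}$ vanishes on any cotangent fiber (since $dq = 0$ along the fiber) and $\th|_{L} = 0$ by our normalization, so $\eta_{\cN}|_{L\cap\cN} = 0$. Note also that $\l_{Q}$ vanishes on the zero section, so $\eta_{\cN}|_{Q} = \th|_{Q}$.

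For the integral identity I would, given a chord $x(t) = \Psi(q(t),p(t))$ with $q(0),q(1)\in Q\cap L$, consider the radial strip
\[
\sigma\co [0,1]\times[0,1] \to \cN, \qquad \sigma(s,t) := \Psi(q(t),\,s\,p(t)),
\]
whose boundary decomposes into the chord $x$ at $s=1$, the underlying geodesic $q\subset Q$ at $s=0$, and two radial segments at $t=0$ and $t=1$ lying entirely inside the cotangent fibers $\Psi(T^{*}_{q(t_*)}Q)\subset L\cap\cN$. Since $\eta_{\cN}$ is closed, Stokes' theorem gives $0 = \int_{\partial\sigma}\eta_{\cN}$; the fiber segments contribute zero because $\eta_{\cN}$ vanishes on $L\cap\cN$, and the $s=0$ boundary contributes $\int_{0}^{1}q^{*}(\th|_{Q})$. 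With the standard boundary orientation this yields
\[
\int_{0}^{1} x^{*}\eta_{\cN} \;=\; \int_{0}^{1} q^{*}(\th|_{Q}) \;=\; -\nu(x),
\]
which is the claimed identity \eqref{e:nu=eta}.

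It remains to extend $\eta_{\cN}$ to a closed $1$-form on the whole of $\cN_{L\cup Q}$ while preserving $\eta|_{\cN} = \eta_{\cN}$ and arranging $\eta|_{L} = 0$. After shrinking $\cN(L)$ so that each component of $\cN(L)\cap\cN$ is a contractible neighborhood of exactly one $q_{j}$, the restriction of $\eta_{\cN}$ to such a component is exact; choose the unique primitive $g_{j}$ with $g_{j}(q_{j}) = 0$, and note that $\eta_{\cN}|_{L\cap\cN} = 0$ forces $g_{j}$ to vanish on $L$ throughout its component. A standard partition of unity argument then produces a smooth function $\tilde g\co \cN(L)\to\R$ with $\tilde g|_{L} = 0$ and $\tilde g$ agreeing with $g_{j}$ on a smaller open neighborhood of each $q_{j}$; shrinking $\cN_{L\cup Q}$ to the open subset on which the two local prescriptions agree, one defines $\eta$ to be $\eta_{\cN}$ on $\cN$ and $d\tilde g$ on $\cN(L)$, getting a global closed form with all three required properties. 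The main conceptual step is the Stokes computation; the final extension is routine bookkeeping once the overlap has been shrunk appropriately.
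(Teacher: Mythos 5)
Your proof is correct and reaches the same conclusions by broadly the same strategy (define $\eta$ on $\cN$ by the prescribed formula, verify $\eta|_{L\cap\cN}=0$ using that $L\cap\cN$ consists of cotangent fibers, then extend across $\cN(L)$ by gluing a primitive on the contractible components of the overlap). The one genuine difference is in how the integral identity \eqref{e:nu=eta} is derived: the paper first extends $\eta$ to all of $\cN_{L\cup Q}$ and then argues abstractly that $\int x^*\eta$ depends only on $[x]\in H_1(\cN_{L\cup Q},L)$, applied to the homology $x\sim q$ (the projected geodesic); you instead run an explicit Stokes computation over the radial strip $\sigma(s,t)=\Psi(q(t),s\,p(t))$ entirely inside $\cN$. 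Your variant is slightly more elementary and has the minor advantage that it establishes \eqref{e:nu=eta} using only $\eta_\cN$, independently of the extension step — the extension is then needed only to make $\eta$ globally defined on $\cN_{L\cup Q}$ with $\eta|_L=0$, which is what is used later for the filtration argument in Lemma~\ref{l:nufilter}. Your handling of the extension (choosing primitives $g_j$ vanishing at $q_j$ and patching with a partition of unity, then shrinking $\cN(L)$ so the two prescriptions agree on the new overlap) is essentially the paper's bump-function argument made a little more explicit, and is fine since $\cN_{L\cup Q}$ is itself chosen flexibly.
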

\begin{proof}
We define $\eta:= \th - \Psi_{*}\l_{Q}$ inside $\cN$ and we need to extend $\eta$ over $\cN(L)$.
Since $L$ is a union of cotangent fibres inside $\cN$ by \eqref{e:NcL} and since $\th|_{L} = 0$, we have $\eta|_{L \cap \cN} = 0$.
Since $\eta|_{\cN(L) \cap \cN}$
is a closed $1$-form on a disjoint union of contractible domains, 
it is exact $d\psi = \eta$ for a function $\psi:\cN(L) \cap \cN \to \R$.
We can assume that $\psi = 0$ on $L \cap \cN$ since
$\eta|_{L \cap \cN} = 0$, so by using bump functions 
we can extend $\psi$ to a compactly supported function on $\cN(L)$ that vanishes
on $L$ and $d\psi$ agrees with $\eta$ on $\cN(L) \cap \cN$.  So $d\psi$ lets us extend
$\eta$ to $\cN_{L \cup Q}$ as desired.

For paths $x:[0,1] \to \cN_{L \cup Q}$ that start and end on $L$, the integral
$\int_{0}^{1} x^{*}\eta$ only depends on the homology class $[x] \in H_{1}(\cN_{L \cup Q}, L)$
since $\eta$ is closed and $\eta|_{L} = 0$.  Therefore \eqref{e:nu=eta} follows 
since a chord $x \in \cC_{H}(L)$
is homologous to its projection $q$ to $Q$ and $\eta|_{Q} = \th|_{Q}$.
\end{proof}

The cotangent bundle action $\cA_{H,L}^{T^{*}Q}(x)$ is equal to $\int_{0}^{1}H(x(t)) dt - \int_{0}^{1}x^{*}\l_{Q}$ 
so by Lemma~\ref{l:NuEta} we have the relation
\begin{equation}\label{e:ActionCtbaNu}
	\cA_{H,L}(x) = \cA_{H,L}^{T^{*}Q}(x) + \nu(x).
\end{equation}

Consider the following class of admissible almost complex structures $J$.
\begin{defn}\label{d:CylJinN}
	For a Weinstein neighborhood $\cN$ of $Q$ and a metric $g$ on $Q$, 
	denote by $\cJ_{\cyl,g}(\cN) \subset \cJ_{\th}(M)$ the
	admissible almost complex structures on $(M, d\th)$ from Definition~\ref{d:cJ}
	that also satisfy the following additional condition.  Near $\d\cN$ they are time-independent and
	agree with the push-forward of some almost complex structure $J$
	on $T^{*}Q \setminus Q$ that is contact type, meaning $\l_{Q} \circ J = dr$ where $r:T^{*}Q \to \R$
	is $r(q,p) = \abs{p}_{g}$.
\end{defn}
We will now show for these almost complex structures that $\nu$ defines a filtration on 
the complex
$(CF^{*}_{(-\infty, 0]}(L;H), d_{J})$ and detects when a differential leaves the Weinstein neighborhood $\cN$ of $Q$.  We have
chosen the minus sign in the definition of $\nu$ so that the differential does not decrease the $\nu$ value just like the action functional $\cA_{H,L}$.

\begin{lem}\label{l:nufilter}
	For $J \in \cJ_{\cyl,g}(\cN)$ and a Hamiltonian $H \in \cH_{\cN}^{Q}$,
	let $u \in \cM(x_{-}, x_{+}; L, H, J)$ solve \eqref{e:FEL}
	where $x_{-}$ and $x_{+}$ are chords contained in $\cN$, then
	$$
		\nu(x_{-}) \geq \nu(x_{+})
	$$
	with equality if and only if $u$ is contained in $\cN$.  Likewise for
	$u \in \cM(x_{-}, x_{+}; L, H^{s}, J^{s})$ for
	a homotopy $H^{s} \in \cH_{\cN}^Q$ between $H^{\pm} \in \cH_{\cN}^Q$
	and $J^{s} \in \cJ_{\cyl,g}(\cN)$. 
\end{lem}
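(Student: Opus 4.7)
The plan is to apply Stokes' theorem to the closed $1$-form $\eta$ from Lemma~\ref{l:NuEta} on the subset $\bar S := u^{-1}(\bar\cN) \subset \R \times [0,1]$. After perturbing the Weinstein radius $c$ to a generic value so that $u$ is transverse to $\d\cN$ (so $\bar S$ is a smooth manifold with corners and $C := u^{-1}(\d\cN) \cap \d\bar S$ is a smooth $1$-submanifold), Stokes' theorem combined with $d\eta = 0$ on $\cN$, $\eta|_L = 0$, and the identification $\nu(x) = -\int_0^1 x^*\eta$ from \eqref{e:nu=eta} reduces the claim to the inequality
\[
\nu(x_-) - \nu(x_+) \;=\; -\int_C u^*\eta \;\geq\; 0,
\]
with equality iff $u \subset \cN$. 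The asymptotic contributions at $s = \pm\infty$ account for the $\nu(x_\pm)$ terms (since $x_\pm \subset \cN$), and the boundary parts on $\R \times \{0,1\}$ vanish because $u$ sends them into $L$.

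To control the $C$-integral, I would split $\eta = \theta - \Psi_*\lambda_Q$ on $\cN$ and estimate each piece. For the $\theta$ piece, apply Stokes on the complementary region $S^c := u^{-1}(M \setminus \cN)$: because $H$ is constant outside $\cN$, the Floer equation there reduces to the $J$-holomorphic curve equation, so $u^*\omega = \|\d_s u\|_J^2\, ds \wedge dt \geq 0$ on $S^c$. Since $\d S^c$ consists of parts on $L$ (where $\theta$ vanishes) together with $C$ equipped with the orientation opposite to $\d\bar S$, this yields $\int_C u^*\theta = -\int_{S^c} u^*\omega \leq 0$. For the $\lambda_Q$ piece, observe that in the collar $\{\rho_H \leq r \leq c\}$ of $\d\cN$ inside $\cN$ the Hamiltonian $H$ is also constant by condition (3) of Definition~\ref{d:cHcN}, so $u$ is $J$-holomorphic there, and $J \in \cJ_{\cyl,g}(\cN)$ satisfies $\lambda_Q \circ J = dr$. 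At a point of $C$ with outward normal $\nu_p$ and boundary-oriented tangent $n_p$, so that $(\nu_p, n_p)$ gives the positive orientation of the strip and $j n_p = -\nu_p$, the pointwise identity
\[
(u^*\lambda_Q)(n_p) \;=\; -dr(J\, du(n_p)) \;=\; -dr(du(j n_p)) \;=\; dr(du(\nu_p)) \;\geq\; 0
\]
combines $J\circ du = du \circ j$ with the fact that $\nu_p$ points toward increasing $r$. Hence $\int_C u^*\lambda_Q \geq 0$, and together the two bounds give the desired inequality.

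For the equality case, both integrals must vanish. Vanishing of $\int_C u^*\theta$ yields $\int_{S^c} \|\d_s u\|_J^2 = 0$, so $u$ is locally constant on $S^c$. Either $S^c$ has empty interior, in which case $u \subset \bar\cN$ and the strong maximum principle applied to $r \circ u$ in the $J$-holomorphic collar near $\d\cN$ forbids $u$ from touching $\d\cN$ at an interior point of the strip, forcing $u \subset \cN$; or $u$ is constant on an open set and unique continuation for the Floer equation makes $u$ globally constant, with value necessarily in $\cN$ because $x_\pm \subset \cN$. The homotopy case $u \in \cM(x_-, x_+; L, H^s, J^s)$ goes through verbatim, since each $H^s \in \cH^Q_\cN$ still has $dH^s = 0$ both outside $\cN$ and in the collar of $\d\cN$, and each $J^s$ retains the contact-type behavior for $\lambda_Q$ there.

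The principal technical obstacle is arranging transversality of $u$ to $\d\cN$ so that $C$ is a smooth $1$-submanifold to which Stokes' theorem applies; this is handled by perturbing the Weinstein radius $c$ to a generic value and then taking a limit, together with carefully tracking the orientation reversal on $C$ as it appears in both $\d\bar S$ and $\d S^c$.
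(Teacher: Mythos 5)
Your proposal is correct and follows essentially the same route as the paper's own proof: you apply Stokes' theorem to $\eta$ to reduce the claim to bounding the boundary integral over $u^{-1}(\d\cN)$, and then control it using (a) nonnegativity of the $J$-holomorphic energy on $u^{-1}(M\setminus\cN)$, where the Floer equation degenerates to Cauchy-Riemann because $dH=0$, and (b) the contact-type condition $\lambda_Q\circ J = dr$ at $\d\cN$. The only cosmetic difference is that you split $\eta = \theta - \Psi_*\lambda_Q$ and bound the two boundary pieces separately, whereas the paper runs the argument as a single chain of inequalities through $E(u|_{S})$; your treatment of the transversality of $u$ to $\d\cN$ and of the equality case (via the strong maximum principle and unique continuation) is slightly more explicit than what the paper records, but it is the same argument.
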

\begin{proof}
	Since $\eta$ is closed and $\eta|_{L} = 0$, for $u \in \cM(x_{-}, x_{+}; L, H, J)$ we have
	$$
		0 = \int_{u^{-1}(\cN)} u^{*}d\eta = \int_{x_{+}} \eta - \int_{x_{-}} \eta + \int_{u^{-1}(\d\cN)} u^{*}\eta
	$$
	so it suffices to prove
	\begin{equation}\label{e:sufficeEta}
		\int_{u^{-1}(\d\cN)} u^{*}\eta \leq 0
	\end{equation}
	with equality if and only if $u$ is contained in $\cN$.  The if direction is immediate
	since if $u$ is contained in $\cN$ then $u^{-1}(\d\cN) = \emptyset$.
	
	For the other direction we will argue as in \cite[Lemma 7.2]{AbS10}.
	Suppose $u$ leaves $\cN$, so let 
	$S = u^{-1}(M \backslash \cN)$ 
	and write $\d S = \d_{l}S \cup \d_{n}S$ where $u(\d_{l}S) \subset L$ and 
	$u(\d_{n}S) \subset \d(M \backslash \cN)$.  
	If $\zeta$ is a vector tangent to $\d_{n}S$ with a positive
	orientation, then
	$j\zeta$ points inwards in $S$ and hence $dr(du(j\zeta)) \geq 0$ where $r: T^{*}Q \to \R$ is given by 
	$r(q,p) = \abs{p}_{g}$.
	
	Since $J \in \cJ_{\cyl,g}(\cN)$, by definition $\l_{Q} \circ J = dr$ near $\d\cN$,
	and $u|_{S}$ is $J$-holomorphic we have
	$$
		\l_{Q}(du(\zeta)) = -(\l_{Q} \circ J)(du(j\zeta)) = -dr(du(j\zeta)) \leq 0
	$$
	and therefore
	$$
		\int_{\d_{n}S} u^{*}\l_{Q} \leq 0.
	$$
	Furthermore since $u|_{S}$ is $J$-holomorphic, $\th|_{L} = 0$, and $\th = \l_{Q} + \eta$ in $\cN$
	we get that
	\begin{equation}\label{e:NuEnergyE}
		0 \leq E(u|_{S}) = \int_{S} \norm{\d_{s}u}^{2}_{J}\, dsdt 
		= \int_{S} u^{*}(d\th)  
		= \int_{\d_{n}S} u^{*}\l_{Q} + \int_{\d_{n}S} u^{*}\eta
		\leq \int_{\d_{n}S} u^{*}\eta.
	\end{equation}
	This equation \eqref{e:NuEnergyE} proves \eqref{e:sufficeEta} since the domains of
	integration have opposite orientations.  By \eqref{e:NuEnergyE} equality in 
	\eqref{e:sufficeEta} implies that $E(u|_{S}) = 0$, i.e.\ that $u|_{S}$ is constant which is impossible
	if $u$ leaves $\cN$.
\end{proof}

\subsubsection{The associated graded complex}\label{s:associated}
 
Let $\gamma$ be the homotopy type of a path in $Q$ that 
starts and ends at $q_{i}, q_{j} \in Q \cap L$, where we will also assume $\gamma$ is non-trivial
if $q_{i} = q_{j}$.  For a Hamiltonian $H \in \cH_{\cN}^{Q}$, define
$$
	CF^{*}_{\nu,\gamma}(L;H) = \Z/2\, \ip{x \in \cC_{H}(L): x \subset \cN \mbox{ and } [\pi(x)] = \gamma}
$$
to be the $\Z/2$ vector space spanned by chords whose geodesic in $Q$ represents $\gamma$
where here $\pi: \cN \to Q$ is the cotangent projection. 
Let 
$$\cM_{1}^{\nu}(y, x; L, H, J) \subset \cM_{1}(y, x; L, H, J)$$
be the Floer trajectories from \eqref{e:diffL} that are contained in the Weinstein neighborhood $\cN$ 
and define $d_{J}^{\nu}: CF^{*}_{\nu,\gamma}(L;H) \to CF^{*+1}_{\nu, \gamma}(L; H)$
by
$$
	d_{J}^{\nu}x = \sum_{y} \#_{\Z/2} (\cM_{1}^{\nu}(y, x; L, H, J)/\R)\, y.
$$
Since all the chords generating $CF^{*}_{\nu, \gamma}(L; H)$ have the same $\nu$-value,
by Lemma~\ref{l:nufilter} the standard gluing and compactness results show that
$d_{J}^{\nu}$ is a differential if $J \in \cJ_{\cyl,g}(\cN)$ is regular with respect to $H$.
We will denote the resulting homology groups by
$$
	HF^{*}_{\nu, \gamma}(L;H) = H^{*}(CF^{*}_{\nu, \gamma}(L; H), d_{J}^{\nu}).
$$
Since we are no longer restricting ourselves to a certain action window, Lemma~\ref{l:nufilter}
also shows that continuation maps give isomorphisms 
$HF^{*}_{\nu, \gamma}(L;H) \cong HF^{*}_{\nu, \gamma}(L;K)$ between different $H, K \in \cH_{\cN}^{Q}$.
In particular since $CF^{*}_{\nu, \gamma}(L;K) = 0$ when the slope of $K$ is less than
the length of any geodesic in the homotopy class $\gamma$, it follows that
\begin{equation}\label{e:ass0}
	HF^{*}_{\nu, \gamma}(L; H) = 0
\end{equation}
for any $H \in \cH_{\cN}^{Q}$.


\subsection{Bounding the cotangent bundle action}\label{s:CBAB}

In this subsection we will use the Liouville-filtration to prove Proposition~\ref{p:CTBAUB}, which gives a bound on the cotangent bundle action $\cA_{H,L}^{T^{*}Q}(x)$ from \eqref{e:CTBA} in terms of the action $\cA_{H,L}(x)$ for chords $x \in \cC_{H}(L)$ connected to 
$q_{0}$ by a differential.

\subsubsection{Finitely many homology classes}

Fix an admissible almost complex structure $J \in \cJ_{\th}(M)$ from Definition~\ref{d:cJ} that is time independent
outside of the Weinstein neighborhood $\cN$ from Lemma~\ref{l:L}.
\begin{lem} \label{l:nubound}
	For $A > 0$, there is an $\eps_{0} > 0$ sufficiently small 
	so that there are only a finite number $N_{A, J}$ of homology classes $\z \in H_{1}(\cN_{L \cup Q}, L)$ satisfying the following property:
	There is a Hamiltonian $H \in \cH_{\cN}^{Q}$,
              a chord $x \in \cC_{H}(L)$, and a $J' \in \cJ_{\th}$ satisfying:
	\begin{enumerate}
	\item[(i)]
	$[x] = \zeta \in H_{1}(\cN_{L \cup Q}, L)$ and $\cA_{H,L}(x) \geq -A$.
	\item[(ii)] the moduli space $\cM(q_{0}, x; L, H, J')$ from \eqref{e:MLFC} 
	is non-empty.
	\item[(iii)] $J'$ is within 
	$\eps_{0}$ of $J$ in the uniform
	$C^{\infty}$-metric outside $\cN$.
	\end{enumerate}
	In particular there is a constant $C_{A,J} \geq 0$ such that $-\nu(x) \leq C_{A,J}$ for any such chord $x$.
\end{lem}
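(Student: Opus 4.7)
The plan is to argue by compactness applied to the genuinely holomorphic portion of $u$ that lies outside $\cN$. First, the action--energy identity \eqref{e:actionbound} gives
\[
E(u) \;=\; \cA_{H,L}(q_{0}) - \cA_{H,L}(x) \;\leq\; -\eps_{H} + A \;\leq\; A + 1
\]
(after normalizing $\eps_{H} \leq 1$). Because $dH \equiv 0$ on $M \setminus \cN$ for $H \in \cH_{\cN}^{Q}$, the restriction $w := u|_{u^{-1}(M \setminus \mathring{\cN})}$ is an honest $J'$-holomorphic curve with boundary on $L \cup \partial \cN$ and $E(w) \leq A + 1$. For $\eps_{0}$ small enough, $J'$ remains admissible, so Lemma~\ref{l:MaxPrin} together with a standard monotonicity estimate at the cylindrical end of $M$ confines the image of every such $w$ to a common compact set $K \subset M$ depending only on $A$, $L$, $\cN$, and $J$.

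The main step is to feed this family into Fish's compactness theorem \cite{Fi11}, much as in Section~\ref{ss:limit}: the curves $w$ have uniformly bounded energy, images in $K$, mixed boundary on the Lagrangian $L$ and the contact-type hypersurface $\partial \cN$, and are $J'$-holomorphic with $J'$ ranging over the $C^{\infty}$-compact $\eps_{0}$-neighborhood of $J$. Gromov--Fish compactness then yields that the set of relative homology classes $[w|_{w^{-1}(\partial \cN)}] \in H_{1}(\partial \cN, L \cap \partial \cN)$ realized by such $w$ is finite.

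To transfer this finiteness to the chord class, I regard $u_{\text{in}} := u|_{u^{-1}(\cN)}$ as a $2$-chain in $\cN$ whose boundary consists of the asymptotic $x$ at $+\infty$, the constant asymptotic $q_{0}$ at $-\infty$, arcs on $L \cap \cN$, and arcs on $\partial \cN$ matching those of $w$. Since $q_{0} \in L$, and after invoking the excision isomorphism $H_{1}(\cN, L \cap \cN) \cong H_{1}(\cN_{L\cup Q}, L)$, one reads off that $[x]$ is determined, via the long exact sequence of the triple $(\cN, L \cap \cN \cup \partial \cN, L \cap \cN)$, by $[w|_{w^{-1}(\partial \cN)}]$. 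Finiteness of the latter therefore forces finiteness of $\zeta = [x]$, giving the bound $N_{A,J}$. The ``in particular'' statement is then immediate: by Lemma~\ref{l:NuEta}, $-\nu(x)$ is the pairing of $[x]$ with the class of $\eta$ in $H^{1}(\cN_{L\cup Q}, L; \R)$, and a linear functional on a finite set is bounded.

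The main obstacle is the uniform compactness in the second step: $H$ ranges over all of $\cH_{\cN}^{Q}$ with arbitrary slopes, so chord lengths inside $\cN$ are unbounded, and the boundary $\partial \cN$ is not Lagrangian. The saving point is that $w$ is genuinely $J'$-holomorphic and depends on $H$ only through its boundary data on $\partial \cN$, while the contact-type nature of $\partial \cN$ makes Fish's mixed-boundary compactness go through.
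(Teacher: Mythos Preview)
Your strategy is the same as the paper's---bound the energy, restrict $u$ to the region where it is genuinely $J'$-holomorphic, invoke Fish's target-local compactness \cite{Fi11}, and deduce that only finitely many relative homology classes can arise---but the paper makes one sharper choice that removes the two complications you flag at the end.

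The paper restricts $u$ to the complement of $\cN_{L\cup Q}$ (in fact of a $\delta$-thickening $\cN_{L\cup Q}^{\delta}$) rather than to the complement of $\cN$. Since $L\subset\cN(L)\subset\cN_{L\cup Q}$, this swallows the Lagrangian boundary entirely: the restricted curve $u|_{S^{\delta}}$ has boundary only on the hypersurface $\partial\cN_{L\cup Q}^{\delta}$, so Fish's theorem applies in its plain form with no Lagrangian or mixed boundary condition to worry about. Your version keeps part of the boundary on $L$, which is exactly the ``main obstacle'' you identify; the paper simply avoids it. The same choice also makes the homological transfer one line: the inner piece $u|_{u^{-1}(\cN_{L\cup Q}^{\delta})}$ is a $2$-chain in $\cN_{L\cup Q}$ with relative boundary $x - q_{0} - u(\partial S^{\delta})$ modulo $L$, so $[x] = [u(\partial S^{\delta})]$ in $H_{1}(\cN_{L\cup Q},L)$ directly, without your excision and triple long exact sequence.

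Finally, the paper runs the argument as a contradiction: assume infinitely many distinct classes $[x_{k}]$, pass to a Fish-convergent subsequence of the restricted curves, and observe that Gromov convergence forces $[u_{k}(\partial S^{\delta}_{k})]$ to stabilize. Your sentence ``Gromov--Fish compactness then yields that the set of relative homology classes \ldots is finite'' is correct in spirit but elides exactly this step; compactness gives convergent subsequences, and finiteness of classes is what you extract from that via contradiction.
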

\begin{proof}
	Observe that since $\cA_{H,L}(q_{0}) = -\e_{H}$, the bound on $\cA_{H,L}(x)$ is equivalent
	to the uniform bound $E(u) \leq A - \e_{H}$ for $u \in \cM(q_{0}, x; L, H, J')$ by the a priori energy bound
	\eqref{e:actionbound}.
	
	By contradiction assume there is an infinite number of homology classes, then we have a sequence
	$H_{k} \in \cH_{\cN}^{Q}$ and maps $u_{k} \in \cM(q_{0}, x_{k}; L, H_{k}, J_{k})$ with energy bounded by 
	$E(u_{k}) \leq A$
	such that the homology classes $[x_{k}] \in H_{1}(\cN_{L \cup Q}, L)$ are pairwise distinct
	and outside of $\cN$ we have $C^{\infty}$-convergence $J_{k} \to J$.
	We may assume each $u_{k}$ leaves the neighborhood $\cN_{L \cup Q}$, since if a
	$u_{k}$ does not leave the neighborhood then it gives the relation 
	$[x_{k}] = [q_{0}] \in H_{1}(\cN_{L \cup Q}, L)$.
	
	For $\delta \geq 0$, let $S^{\delta}_{k} = u_{k}^{-1}(M \backslash \cN_{L \cup Q}^{\delta})$
	where $\cN_{L \cup Q}^{\delta}$ are those points in $M \backslash \cN$ within $\delta$
	of a point in $\cN_{L \cup Q}$ in terms of the metric induced by $\w$ and $J$.
	By Fish's compactness result \cite[Theorem A]{Fi11} we know that for any
	$\e > 0$ there is a $\delta \in [0, \e)$ and a subsequence 
	of the curves $u_{k}|_{S^{\delta}_{k}}$ that Gromov converges to a $J$-holomorphic
	map
	$$
		u_{\infty}: S_{\infty}^{\delta} \to M \backslash \cN_{L \cup Q}^{\delta}.
	$$
	It follows from the definition of Gromov convergence that for sufficiently large $k$ 
	in the subsequence that
	$$
		[u_{k}(\d S^{\delta}_{k})] = [u_{\infty}(\d S^{\delta}_{\infty})] \in H_{1}(\cN_{L \cup Q})
	$$
	and in particular the subsequence
	$[u_{k}(\d S^{\delta}_{k})]$ in $H_{1}(\cN_{L \cup Q}, L)$ is eventually constant.  However since
	the maps $u_{k}|_{u_{k}^{-1}(\cN_{L \cup Q}^{\delta})}$ show that
	$$
		[x_{k}] = [u_{k}(\d S^{\delta}_{k})] \in H_{1}(\cN_{L \cup Q}, L)
	$$
	this contradicts the fact that the $[x_{k}]$ classes were distinct.
	
	Once there is a bound $N_{A,J}$ on the number of homology classes, the bound
	on the Liouville-filtration comes for free since $\nu$ only depends on the
	homology class.
\end{proof}

\subsubsection{A bound on the cotangent bundle action}

Using the bound on the Liouville-filtration from Lemma~\ref{l:nubound}, we will now bound the cotangent bundle action.  
Recall $\rho_{H}$ from Definition~\ref{d:cHcN}
is the radius of support of $dH$ for $H \in \cH_{\cN}^{Q}$.
\begin{prop}\label{p:CTBAUB}
	For any $A > 0$ and $J \in \cJ_{\th}$, there is a constant $C_{A,J}^{\cN} \geq 0$ satisfying the following property:
	For any $J' \in \cJ_{\th}$ that is $C^{\infty}$-close to $J$ outside of the Weinstein
	neighborhood $\cN$ of $Q$ and any $H \in \cH_{\cN}^{Q}$,
	if $x \in \cC_{H}(L)$ is a chord such that 
$-A \leq \cA_{H,L}(x)$ and $\cM(q_{0}, x; L, H, J')$ is non-empty,
then
	\begin{equation}\label{e:CTBAUB}
		\cA_{H,L}^{T^{*}Q}(x) \leq  \rho_{H}\,C_{A,J}^{\cN}.
	\end{equation}
\end{prop}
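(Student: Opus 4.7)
The strategy is to combine the action identity \eqref{e:ActionCtbaNu}, the moduli space hypothesis, and the Liouville-filtration bound of Lemma~\ref{l:nubound}, and then to sharpen the resulting estimate using the explicit shape of Hamiltonians in $\cH_{\cN}^{Q}$ in order to extract a factor of $\rho_{H}$.

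First I would exploit the moduli space hypothesis: since $\cM(q_{0}, x; L, H, J')$ is non-empty, the a priori energy bound \eqref{e:actionbound} gives
\[
\cA_{H,L}(x) \;\leq\; \cA_{H,L}(q_{0}) \;=\; -\e_{H} \;\leq\; 0.
\]
Combined with \eqref{e:ActionCtbaNu} this yields the coarse estimate
\[
\cA_{H,L}^{T^{*}Q}(x) \;=\; \cA_{H,L}(x) - \nu(x) \;\leq\; -\nu(x),
\]
and an application of Lemma~\ref{l:nubound} with the same $A$ and $J$ (and using the $C^{\infty}$-closeness of $J'$ to $J$ outside $\cN$) produces a constant $C_{A,J}$ with $-\nu(x) \leq C_{A,J}$. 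So far this gives only the $\rho_{H}$-independent bound $\cA_{H,L}^{T^{*}Q}(x) \leq C_{A,J}$.

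Next, to extract a factor of $\rho_{H}$, I would rewrite $\cA_{H,L}^{T^{*}Q}(x)$ via \eqref{e:CTBAI} and integrate by parts:
\[
\cA_{H,L}^{T^{*}Q}(x) + \e_{H} \;=\; -\int_{0}^{|p|_{g}} r\, f_{H}''(r)\, dr.
\]
By Definition~\ref{d:cHcN}, $f_{H}''(r) \geq 0$ on $[0, i_{H}]$, so that part of the integral contributes non-positively, while on $[i_{H}, \rho_{H}]$ we have $f_{H}''(r) \leq 0$ and $r \leq \rho_{H}$, giving
\[
\cA_{H,L}^{T^{*}Q}(x) + \e_H \;\leq\; \rho_{H}\bigl(f_{H}'(i_{H}) - f_{H}'(|p|_{g})\bigr) \;\leq\; \rho_{H}\, f_{H}'(i_{H}).
\]
For near chords, where $|p|_{g} \leq i_{H}$, the integrand is non-negative throughout $[0,|p|_g]$, so $\cA_{H,L}^{T^{*}Q}(x) \leq -\e_{H} \leq 0$ and the claimed bound holds trivially.

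The main obstacle is the far-chord case, where one needs to control the slope $f_{H}'(i_{H})$ uniformly in terms of $A$, $J$, and $\cN$ alone. I would handle this by playing the two estimates off against one another: the crude bound $\cA_{H,L}^{T^{*}Q}(x) \leq C_{A,J}$ gives a good estimate when $\rho_{H} f_{H}'(i_{H})$ is large, while the integration-by-parts estimate gives one when it is small. Crucially, Lemma~\ref{l:nubound} restricts the chord to lie in one of finitely many homology classes, on each of which $\nu$ takes a fixed value; using the geometry of how these classes are realized inside $\cN_{L \cup Q}$ should let me combine the two regimes into a single estimate of the form $\cA_{H,L}^{T^{*}Q}(x) \leq \rho_{H}\, C_{A,J}^{\cN}$. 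It is this final synthesis, and in particular the uniform control of the slope in the far-chord case via the finiteness of homology classes, that I expect to be the technical heart of the proof.
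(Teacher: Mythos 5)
Your opening reductions are correct (the coarse bound $\cA_{H,L}^{T^{*}Q}(x)\leq -\nu(x)\leq C_{A,J}$ via \eqref{e:actionbound}, \eqref{e:ActionCtbaNu}, and Lemma~\ref{l:nubound}; the integration-by-parts identity; the triviality of the near-chord case), but the far-chord case has a genuine gap that your proposed synthesis cannot close. The estimate
\[
\cA_{H,L}^{T^{*}Q}(x)+\e_{H}\;\leq\;\rho_{H}\bigl(f_{H}'(i_{H})-f_{H}'(|p|_{g})\bigr)\;\leq\;\rho_{H}\,f_{H}'(i_{H})
\]
is doomed because the slope and $\rho_{H}$ are not independent: since $f_{H}$ increases from $-\e_{H}$ to $M_{H}$ over $[0,\rho_{H}]$ and $f_{H}'$ attains its maximum at $i_{H}$, the mean value theorem gives $f_{H}'(i_{H})\geq (M_{H}+\e_{H})/\rho_{H}$, so $\rho_{H}f_{H}'(i_{H})\geq M_{H}$. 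As $\rho_{H}\to 0$ the slope blows up and the right-hand side stays bounded \emph{below} by $M_{H}$, which is exactly the wrong behavior --- the proposition needs the right side to shrink linearly in $\rho_{H}$. Moreover the slope of $H$ is a free parameter of the Hamiltonian and is completely unconstrained by the homology class of the chord $x$ or by $\nu(x)$; the finitely many homology classes of Lemma~\ref{l:nubound} fix the possible values of $\nu(x)$ but say nothing about $f_{H}'(i_{H})$, so the ``playing the two estimates off against one another'' step has no mechanism to succeed.

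The paper's actual proof is a different and essentially unavoidable device: a conformal rescaling of the cotangent fibers. One takes the fiberwise map $\Phi(q,p)=(q,\phi(|p|)p)$ with $\phi(r)=\tfrac{b}{\rho_{H}}r$ near $[0,\rho_{H}]$ and $\phi(r)=r$ near $c$, which is the identity outside $\cN$ and preserves $L$. Pushing $H$ forward gives $H_{\Phi}\in\cH_{\cN}^{Q}$ with $\Phi_{*}X_{H}=X_{H_{\Phi}}$, and $\Phi$ carries Floer trajectories for $(H,J')$ to Floer trajectories for $(H_{\Phi},\Phi_{*}J')$, so non-emptiness of $\cM(q_{0},x;L,H,J')$ is preserved. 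Crucially, $\nu(\Phi(x))=\nu(x)$ while $\cA_{H_{\Phi},L}^{T^{*}Q}(\Phi(x))=\tfrac{b}{\rho_{H}}\cA_{H,L}^{T^{*}Q}(x)$, so applying the coarse bound $\cA_{H_{\Phi},L}(\Phi(x))\leq 0$ to the \emph{rescaled} chord and using \eqref{e:ActionCtbaNu} yields $\cA_{H,L}^{T^{*}Q}(x)\leq -\tfrac{\rho_{H}}{b}\nu(x)$, and letting $b\to c$ together with Lemma~\ref{l:nubound} gives the desired $\rho_{H}\,C_{A,J}^{\cN}$ with $C_{A,J}^{\cN}=C_{A,J}/c$. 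In short, the $\rho_{H}$ factor is produced not by estimating the slope, but by rescaling the chord to a Hamiltonian whose support radius is order $c$ and transporting the coarse bound back; this is the missing idea in your proposal.
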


\begin{remark}
	Note that since the bound \eqref{e:CTBAUB} holds for all $H \in \cH_{\cN}^{Q}$,
	we can make the right hand side of \eqref{e:CTBAUB} arbitrary small 
	 by requiring $H$ to be such that $0 < \rho_{H}$ is sufficiently small.
\end{remark}

For this proof we will use a conformal symplectomorphism of $M$ supported in
a Weinstein neighborhood $\cN$ that is given by scaling the cotangent fibers of $Q$ under
the identification $\cN = \{(q,p): \abs{p}_{g} < c\}$ from \eqref{e:FWN}.
For positive numbers $\rho < b < c$, let $\phi_{\rho,b}: [0, c) \to [0, c)$ be a diffeomorphism
where
$$
	\phi_{\rho,b}(r) = \tfrac{b}{\rho}\, r \,\,\mbox{ for $r$ near $[0,\rho]$} \quad\mbox{and}\quad
	\phi_{\rho,b}(r) = r \,\,\mbox{ for $r$ near $c$.}
$$
This diffeomorphism $\phi = \phi_{\rho,b}$ 
defines a diffeomorphism $\Phi = \Phi_{\rho,b}$ of $M$ that is the identity outside
the $\cN$ and inside $\cN$ is given by
$\Phi(q,p) = (q, \phi(\abs{p})p)$ in the cotangent bundle coordinates in $T^{*}Q$.

Consider an admissible
Hamiltonian $H = f_{H}(\abs{p})$ in $\cH_{\cN}^{Q}$ where $\rho_{H} \leq \rho$.
Pushing the Hamiltonian vector field $X_{H}$ for $H$ forward by $\Phi$ results in
$\Phi_{*}X_{H} = X_{H_{\Phi}}$ a Hamiltonian vector field for an
admissible Hamiltonian $H_{\Phi} \in \cH_{\cN}$ given by
\begin{equation}\label{e:PhiH}
	H_{\Phi}(q,p) = f_{H_{\Phi}}(\abs{p}) = \int_{0}^{\phi^{-1}(\abs{p})} \phi'(t) f'_{H}(t) dt
\end{equation}
with $\rho_{H_{\Phi}} < b$.

Since the Lagrangian $L$ intersects $\cN$ along cotangent fibers \eqref{e:NcL}
it follows that $\Phi$ preserves $L$ set-wise.
One can now check that if $x \in \cC_{H}(L)$ is a chord for $H$, then its image
$\Phi(x) \in \cC_{H_{\Phi}}(L)$ is a chord for $H_{\Phi}$ where
\begin{equation}\label{e:PhiAndNu}
	\nu(x) = \nu(\Phi(x)) \quad\mbox{since $x$ and $\Phi(x)$ are homologous in $H_{1}(\cN_{L \cup Q}, L)$.}
\end{equation}
It follows from \eqref{e:PhiH} that for $r$ near $[0, b]$ we have 
$f_{H_{\Phi}}(r) = \tfrac{b}{\rho} f_{H}(\tfrac{\rho}{b} r)$ and hence 
the cotangent bundle
actions \eqref{e:CTBA} of $x$ and $\Phi(x)$ are related by
\begin{equation}\label{e:CTBAandPhi}
	\cA_{H_{\Phi},L}^{T^{*}Q}(\Phi(x)) = \tfrac{b}{\rho} \cA_{H,L}^{T^{*}Q}(x).
\end{equation}
We can now prove Proposition~\ref{p:CTBAUB}.
\begin{proof}[Proof of Proposition~\ref{p:CTBAUB}]
	Consider a diffeomorphism $\Phi$ of $M$ associated to a $\phi_{\rho_{H}, b}$ where $b < c$.
	Observe for chords $x_{\pm} \in \cC_{H}(L)$ that $\Phi$ induces a correspondence between
	elements of the moduli spaces 
	$$
	\cM(x_{-}, x_{+}; L, H, J) \quad\mbox{and}\quad 
	\cM(\Phi(x_{-}), \Phi(x_{+}); L, H_{\Phi}, \Phi_{*}J)
	$$
	from \eqref{e:MLFC}.  In particular if $x \in \cC_{H}(L)$ is a chord such that
	$\cM(q_{0}, x; H, J)$ is non-empty then $\cM(q_{0}, \Phi(x); H_{\Phi}, \Phi_{*}J)$
	is non-empty and hence by the a priori energy estimate \eqref{e:actionbound}
	we know that $\cA_{H_{\Phi}, L}(\Phi(x)) \leq 0$.  Therefore by
	the relations \eqref{e:ActionCtbaNu},
	\eqref{e:PhiAndNu}, and \eqref{e:CTBAandPhi} we have that
	$$
		0 \geq \cA_{H_{\Phi}, L}(\Phi(x)) = 
		\cA_{H_{\Phi}, L}^{T^{*}Q}(\Phi(x)) + \nu(\Phi(x)) = 
		\tfrac{b}{\rho_{H}} \cA_{H,L}^{T^{*}Q}(x)
		+ \nu(x)
	$$
	so therefore $\cA_{H,L}^{T^{*}Q}(x) \leq -\tfrac{\rho_{H}}{b} \nu(x)$ for all $b < c$
	and hence 
	$$\cA_{H,L}^{T^{*}Q}(x) \leq -\tfrac{\rho_{H}}{c} \nu(x).$$
	Since we know $-\nu(x) \leq C_{A,J}$ from Lemma~\ref{l:nubound},
	we are done with $C_{A,J}^{\cN} = \tfrac{C_{A,J}}{c}$.
\end{proof}


\subsection{Existence of a differential from a near path chord}\label{s:nearchord}

We will now turn to the proof of Theorem~\ref{t:ExistsPCD} and let us remind the reader of the near/far
and path/loop dichotomies for chords from Section~\ref{s:cHcN}.
We will first need the following lemma, where $g$ is a metric on $Q$ with nonpositive curvature,
$\cN$ is any Weinstein neighborhood of $Q$, the Hamiltonian
$H \in \cH_{\cN\!, g}^{Q}$ is an admissible Hamiltonian, and $J \in \cJ_{\cyl,g}(\cN)$ is
an admissible almost complex structure from Definition~\ref{d:CylJinN}. 

\begin{lem}\label{l:lowerorderchord}
	If $x_{f} \in \cC_{H}(L)$ is a far chord of $H$ such that $\gen{d_{J}x_{f}, q_{0}} \not= 0$
	in $(CF^{*}(L;H), d_{J})$,
	then there is another chord $x' \in \cC_{H}(L)$ such that
	$\gen{d_{J}x', q_{0}} \not= 0$ with 
	$$
		\cA_{H,L}(x') > \cA_{H,L}(x_{n}) \quad\mbox{and}\quad \nu(x') > \nu(x_{f}) = \nu(x_n)
	$$
	where $x_{n}$ is the corresponding near chord to $x_{f}$.
\end{lem}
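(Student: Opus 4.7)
The plan is to exploit the Liouville-filtration by $\nu$ together with the acyclicity $HF^{*}_{\nu,\gamma}(L;H)=0$ from equation~\eqref{e:ass0}. The non-positive sectional curvature assumption enters first: each homotopy class of paths in $Q$ with endpoints in $Q \cap L$ has a unique geodesic representative, so for $\gamma = [\pi \circ x_{f}] = [\pi \circ x_{n}]$ the associated graded complex $CF^{*}_{\nu,\gamma}(L;H)$ is exactly two-dimensional with basis $\{x_{n}, x_{f}\}$. Combined with the Maslov index computation of Proposition~\ref{p:indexR}, which forces these two generators to differ by exactly one in degree, the vanishing $HF^{*}_{\nu,\gamma}=0$ pins down the associated graded differential to be $d^{\nu}_{J} x_{f} = x_{n}$.

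Lifting back to the full complex, this yields the decomposition
\[
d_{J} x_{f} \;=\; x_{n} + R, \qquad R \in F^{>\nu(x_{f})} CF^{*}(L;H),
\]
where $F^{>c}$ denotes the subcomplex of chords with $\nu>c$, which is a subcomplex by Lemma~\ref{l:nufilter}. Since $\ip{d_{J} x_{f}, q_{0}} \neq 0$ and $x_{n} \neq q_{0}$, the constant chord $q_{0}$ must appear with nonzero coefficient in $R$, forcing $\nu(x_{f}) < \nu(q_{0}) = 0$. Writing $R = q_{0} + R''$, the identity $d_{J}^{2} x_{f} = 0$ together with the Maslov-index vanishing $\ip{d_{J} x_{n}, q_{0}} = 0$ (which holds because $|x_{n}|_{\Mas} = |q_{0}|_{\Mas} = 0$ by Proposition~\ref{p:indexR}) yields the chain-level identity $d_{J} R'' = d_{J} x_{n}$ as elements of $F^{>\nu(x_{f})} CF^{*}(L;H)$.

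To extract the desired chord $x'$ I would feed the short exact sequence
\[
0 \to F^{>\nu(x_{f})} CF^{*} \to F^{\geq \nu(x_{f})} CF^{*} \to CF^{*}_{\nu,\gamma}(L;H) \to 0
\]
into its long exact sequence in cohomology. Acyclicity of the quotient forces the inclusion to induce an isomorphism $H^{*}(F^{>\nu(x_{f})}) \xrightarrow{\cong} H^{*}(F^{\geq \nu(x_{f})})$. The chain-level relation $d_{J} x_{f} = q_{0} + x_{n} + R''$ exhibits $x_{n} + R''$ as a cocycle cohomologous to $q_{0}$ in $F^{\geq \nu(x_{f})}$, and tracing this cohomology class back through the isomorphism should produce a chain $\beta \in F^{>\nu(x_{f})} CF^{*}$ whose differential registers $q_{0}$. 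Any chord summand $x'$ of $\beta$ satisfying $\ip{d_{J} x', q_{0}} \neq 0$ then automatically has $\nu(x') > \nu(x_{f})$ by construction.

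Finally, the action bound $\cA_{H,L}(x') > \cA_{H,L}(x_{n})$ will follow from combining $\cA_{H,L} = \cA^{T^{*}Q}_{H,L} + \nu$ (equation~\eqref{e:ActionCtbaNu}) with the lower bound $\cA^{T^{*}Q}_{H,L}(x') \geq B_{f_{H}}$ from~\eqref{e:CTBAB}, the near-chord estimate $\cA^{T^{*}Q}_{H,L}(x_{n}) \leq -\varepsilon_{H}$, and the strict inequality $\nu(x') > \nu(x_{f}) = \nu(x_{n})$, after choosing $\varepsilon_{H}$ and $B_{f_{H}}$ suitably small (the finiteness of possible $\nu$-values from Lemma~\ref{l:nubound} gives a positive minimum gap between distinct $\nu$-values). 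The main obstacle I anticipate is the chain-level extraction of $\beta$: the long exact sequence identifies cohomology classes but not explicit chain witnesses, so a more concrete construction will be needed to pin down $\beta$ and verify that $q_{0}$ really appears in $d_{J}\beta$. The non-positive curvature hypothesis is essential here precisely because it reduces $CF^{*}_{\nu,\gamma}$ to the two-generator complex $\{x_{n}, x_{f}\}$, without which the clean identification $d^{\nu}_{J} x_{f} = x_{n}$ would fail.
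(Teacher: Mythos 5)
The key error is in your very first deduction from the acyclicity of $CF^{*}_{\nu,\gamma}(L;H)$. You write $d^{\nu}_{J} x_{f} = x_{n}$, but the direction is backwards. In the paper's conventions (see \eqref{e:actionbound} and the paragraph immediately following \eqref{e:diffL}), the Floer differential \emph{increases} the action $\cA_{H,L}$: the output $y$ of $d_{J}x$ is the endpoint of the trajectory at $s = -\infty$, which has the larger action. Since $\cA_{H,L}(x_{n}) < \cA_{H,L}(x_{f})$, the nonzero associated-graded differential must go the other way, $d^{\nu}_{J} x_{n} = x_{f}$. Your proposed chain-level decomposition $d_{J}x_{f} = x_{n} + R$ is therefore impossible ($x_{n}$ has strictly smaller action than $x_{f}$ and cannot appear in $d_{J}x_{f}$), and the entire lift-and-long-exact-sequence argument built on it collapses.

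Once the direction is corrected, the paper's proof is much shorter than what you outline and avoids the chain-level extraction you flag as the ``main obstacle.'' From $d^{\nu}_{J} x_{n} = x_{f}$ one reads off $\ip{d_{J} x_{n}, x_{f}} \ne 0$ in the full complex. Combining this with the hypothesis $\ip{d_{J} x_{f}, q_{0}} \ne 0$ and $(d_{J})^{2} x_{n} = 0$, the $q_{0}$-coefficient of $(d_{J})^{2} x_{n}$ is a sum over intermediate chords; since the summand through $x_{f}$ is nonzero, there must be another chord $x' \ne x_{f}$ with both $\ip{d_{J} x_{n}, x'} \ne 0$ and $\ip{d_{J} x', q_{0}} \ne 0$. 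The inequality $\cA_{H,L}(x') > \cA_{H,L}(x_{n})$ is then immediate because the differential increases action (no appeal to \eqref{e:CTBAB} or to gaps in the $\nu$-spectrum is needed), and $\nu(x') \ge \nu(x_{n})$ by Lemma~\ref{l:nufilter}, with strict inequality because equality would confine the differential from $x_{n}$ to $x'$ inside $\cN$, giving a homotopy of geodesics; uniqueness of geodesic representatives under non-positive curvature would then force $x' = x_{f}$, a contradiction. You will also want to delete the claim that $\ip{d_{J} x_{n}, q_{0}} = 0$ follows from Proposition~\ref{p:indexR}: it is unnecessary for the argument, and the index statement there concerns \emph{loop} chords, whereas $x_{n}$ need not be one.
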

\begin{proof}
	Let $\gamma$ be the homotopy type of the path in $Q$ that the chord $x_{f}$ represents.
	Since $g$ is a metric of non-positive curvature, by the 
	Cartan-Hadamard theorem $\gamma$ contains exactly one
	geodesic \cite[Theorem 19.2]{Mi63}.
	Hence $x_{f}$ and the corresponding near version $x_{n}$ are the only
	chords in the associated graded complex $CF^*_{\nu,\gamma}(L;H)$ from Section~\ref{s:associated}.
	Since 
	$\cA_{H,L}(x_{n}) < \cA_{H,L}(x_{f})$ it must be the case that $d^{\nu}_{J} x_{n} = x_{f}$
	in order for $HF_{\nu, \gamma}^{*}(L;H) = 0$, which we know by \eqref{e:ass0}.
	
	Returning to $(CF^{*}(L;H), d_{J})$ we have $\gen{d_{J}x_{n}, x_{f}} \not= 0$.
	Since $\gen{d_{J}x_{f}, q_{0}} \not= 0$, to ensure $(d_{J})^{2}x_{n} = 0$
	there must be another chord
	$x' \not= x_{f}$ such that 
	$$\mbox{$\gen{d_{J}x_{n}, x'} \not= 0$
	and $\gen{d_{J}x', q_{0}} \not= 0$.}
	$$
	That $\gen{d_{J}x_{n}, x'} \not= 0$ implies 
	$\cA_{H,L}(x_{n}) < \cA_{H,L}(x')$ and $\nu(x_{n}) \leq \nu(x')$.
	In fact $\nu(x_{n}) < \nu(x')$ since otherwise by Lemma~\ref{l:nufilter}
	the differential connecting them does not leave $\cN$ and its projection to
	$Q$ provides a homotopy between the corresponding geodesics,
	which would imply $x' = x_{f}$ by the non-positive curvature assumption.
\end{proof}

\begin{proof}[Proof of Theorem~\ref{t:ExistsPCD}]
	Pick $\e > 0$ small enough so that
	$a-\e > e(Q;M)$.
	From Proposition~\ref{p:thediff} we know there is a $K \in \cH^{Q}_{\cN}$
	so that for any $H \geq K$ in $\cH^{Q}_{\cN}$ and any
	$J \in \cJ_{\th}(M)$ that there is a chord $x^{(0)} \in \cC_{H}(L)$ so that
	$\gen{d_{J}x^{(0)}, q_{0}} \not = 0$ with $\cA_{H,L}(x^{(0)}) > -(a-\e)$.
	
	Pick a $J \in \cJ_{\cyl,g}(\cN) \cap \cJ_{\i}(\bB^{2n}_{R})$.	
	Using the constant $N_{a,J}$ from Lemma~\ref{l:nubound}, let $H \geq K$ be any
	admissible Hamiltonian so that
	the bound $B_{f_{H}}$ from \eqref{e:CTBAB} satisfies $\abs{B_{f_{H}}} < \tfrac{\e}{2N_{a,J}}$
	and $\rho_{H}$ is small enough so that the bound in Proposition~\ref{p:CTBAUB} satisfies 
	$\abs{\rho_{H}\,C_{a,J}^{\cN}} < \tfrac{\e}{2N_{a,J}}$.
	
	Suppose $x^{(0)} = x_{f}^{(0)} \in  \cC_{H}(L)$ is a far chord, then we have
	$$
		\cA_{H,L}(x_{n}^{(0)}) = \cA_{H,L}(x_{f}^{(0)}) + \cA_{H,L}^{T^{*}Q}(x_{n}^{(0)}) - \cA_{H,L}^{T^{*}Q}(x_{f}^{(0)})
		> -(a-\e) - \tfrac{\e}{N_{a,J}}
	$$
	using the bound from \eqref{e:CTBAB} on $\cA_{H,L}^{T^{*}Q}(x_{n}^{(0)})$ and the bound
	from Proposition~\ref{p:CTBAUB} on $\cA_{H,L}^{T^{*}Q}(x_{f}^{(0)})$.  Lemma~\ref{l:lowerorderchord}
	gives us a new chord $x^{(1)} \in \cC_{H}(L)$ with $\gen{d_{J}x^{(1)}, q_{0}} \not= 0$ such that
	$$
		\cA_{H,L}(x^{(1)}) > \cA_{H,L}(x_{n}^{(0)}) > -(a-\e) - \tfrac{\e}{N_{a,J}}
		\quad\mbox{and}\quad \nu(x^{(1)}) > \nu(x_{n}^{(0)}).
	$$
	If $x^{(1)} = x_{f}^{(1)}$ is a far chord, then since $\cA_{H,L}(x^{(1)}_{f}) > 
	-(a-\e)$
	we can repeat this argument to get a chord $x^{(2)} \in \cC_{H}(L)$ with $\gen{d_{J}x^{(2)}, q_{0}} \not= 0$ 
	such that
	$$
		\cA_{H,L}(x^{(2)}) > \cA_{H,L}(x_{n}^{(1)}) > -(a-\e) - \tfrac{2\e}{N_{a,J}}
		\quad\mbox{and}\quad \nu(x^{(2)}) > \nu(x_{n}^{(1)}) > \nu(x_{n}^{(0)}).
	$$
	There are only $N_{a,J}$ possible values for $\nu$ on such chords by Lemma~\ref{l:nubound},
	so this process must terminate after at most $N_{a,J}$ steps
	with a near chord $x_{n} \in \cC_{H}(L)$ such that 
	$\gen{d_{J}x_{n}, q_{0}} \not= 0$ and with action
	$\cA_{H,L}(x_{n}) > -a$.
	
	Since $\abs{q_{0}}_{\Mas} = 0$ in $\Z/2$, for degree reasons it must be the
	case that $\abs{x_{n}}_{\Mas} = 1$ in $\Z/2$. 
	If $x_{n}$ was a near loop chord, then it follows from 
	Corollary~\ref{c:indexR} below that the Morse index of the underlying
	geodesic $q$ in $Q$ satisfies $m_{\O}(q) = 1$ in $\Z/2$.
	When $g$ is a metric of non-positive curvature, this is impossible
	since every geodesic $q: [0,1] \to Q$ has Morse index $m_{\O}(q) = 0$,
	see for instance \cite[Section 19]{Mi63}.  
	Therefore $x_{n}$ is a path chord.
\end{proof}

\begin{remark}
	The index argument for ruling out near loop chords does not apply
	to far loop chords since for far loop chords
	\eqref{e:indexRI} is shifted by $+1$ to
	$\abs{(x_f,v)}_{\Mas} = -m_{\O}(q) - \mu_{Q}([v]) + 1$.
\end{remark}


\subsection{The index of a near loop chord}\label{s:indexnear}

Let $g$ be a metric on a closed oriented manifold $Q$ and on the cotangent bundle $T^{*}Q$
let $H$ be a Hamiltonian of the form
\begin{equation}\label{e:geoH}
	H(q,p) = f_{H}(\abs{p}_{g}) \quad \mbox{such that $f_{H}'(r) > 0$ and $f_{H}''(r) > 0$ when $r>0$.} 
\end{equation}
The (not necessarily contractible) Hamiltonian chords $x = (q,p) \in \cC_{H}^{*}(T^{*}_{q_{0}}Q)$ on 
$\{\abs{p}_{g} = r\}$ 
are in one-to-one correspondence with geodesic paths $q: [0,1] \to Q$ starting and ending at $q_{0}$ 
with speed $\abs{\dot{q}}_{g} = f_{H}'(r)$.  

Suppose $Q^{n} \subset (M^{2n}, \w)$ is a closed oriented Lagrangian
with a Weinstein neighborhood
$\cN' \subset M$ symplectomorphic to $\{\abs{p}_{g} < c'\} \subset T^{*}Q$.
Let $H: M \to \R$ is a Hamiltonian of the form \eqref{e:geoH} in $\cN'$
and let $L \subset M$ be a simply connected
Lagrangian such that a connected component of $\cN' \cap L$ is identified
with $\{(q_{0}, p) : \abs{p}_{g} < c'\} \subset T^{*}_{q_{0}}Q$.  The main goal of this subsection is to prove the following proposition.

\begin{prop}\label{p:indexR}
Let $x = (q,p)\in \cC_{H}(L)$ is a non-degenerate contractible chord contained in $\cN$
with $q(0) = q(1) = q_{0}$.  Any capping disk $v$ for $x$ determines an element $[v] \in \pi_{2}(M, Q)$,
one has the relation
\begin{equation}\label{e:indexRI}
	\abs{(x,v)}_{\Mas} = -m_{\O}(q) - \mu_{Q}([v])
\end{equation}
and in particular $\abs{x}_{\Mas} = m_{\O}(q)$ in $\Z/2$.
\end{prop}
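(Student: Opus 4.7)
The plan is to adapt Viterbo's original index computation from \cite{Vi90a}, where the analogous relation was established for closed Hamiltonian orbits on $T^{*}Q$, to the Lagrangian chord setting. The key input is the identification, via a symplectic trivialization of $v^{*}TM$, of $\abs{(x,v)}_{\Mas}$ with the Maslov index of the path of Lagrangians $t \mapsto d\vp_{H}^{t}(T_{x(0)}L)$ relative to a boundary Lagrangian, together with the Jacobi-equation interpretation of the linearized Hamiltonian flow for a Hamiltonian of the form $H(q,p) = f_{H}(\abs{p}_{g})$.

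The associated class $[v] \in \pi_{2}(M,Q)$ is built as follows. Since a component of $L \cap \cN'$ coincides with the cotangent fiber $T^{*}_{q_{0}}Q$ and $x(0), x(1)$ both lie in this fiber, the $L$-arc portion of $\partial v$ can be homotoped inside $T^{*}_{q_{0}}Q$ and then radially contracted to $q_{0}$, while the chord portion $x = (q,p)$ radially deforms inside the half-disk bundle over $q$ to the geodesic $q \subset Q$. This produces a disk representing $[v]$ whose boundary is the geodesic loop $q$; independence of choices uses the simple-connectedness of $L$ together with contractibility of $x$ in $\pi_{1}(M,L)$.

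I would first compute $\abs{(x,v_{0})}_{\Mas} = -m_{\O}(q)$ for a reference capping $v_{0}$ supported inside $\cN'$, which satisfies $[v_{0}] = 0 \in \pi_{2}(M,Q)$ since any disk in the Weinstein neighborhood has vanishing Maslov index. Pick an orthonormal frame of $q^{*}TQ$ parallel-transported along $q$ via the Levi-Civita connection of $g$; this frame and its cotangent dual trivialize $x^{*}T(T^{*}Q) \cong [0,1] \times \R^{2n}$ with the standard symplectic structure, and $T_{x(t)}L = T^{*}_{q_{0}}Q$ becomes the constant vertical Lagrangian $\{0\} \times \R^{n}$. Because $f_{H}', f_{H}'' > 0$, the linearized Hamiltonian flow $d\vp_{H}^{t}$ is, up to a symplectic shear from the $f_{H}$-reparametrization that preserves the vertical (and hence contributes nothing to the Maslov index), governed by the Jacobi operator $J \mapsto J'' + R(J,\dot{q})\dot{q}$. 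Its crossings with the vertical Lagrangian coincide with the conjugate points of $q$ along $[0,1]$, with crossing forms matching the index form; the Morse index theorem \cite[\S 15]{Mi63} identifies the signed count with $-m_{\O}(q)$, with non-degeneracy of $x$ ruling out a crossing at $t = 1$.

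Finally, for a general capping $v$, gluing $v$ to $-v_{0}$ along $x$ yields a disk with boundary entirely on $L$, which extends via simple-connectedness of $L$ to a sphere $A \in \pi_{2}(M)$. The standard change-of-capping identities for chord Maslov indices and for disk Maslov indices on $Q$ both express the respective differences in terms of $2c_{1}(A)$, with the relative sign determined by the gluing orientations; combining with $[v_{0}] = 0$ and the formula $\abs{(x,v_{0})}_{\Mas} = -m_{\O}(q)$ yields \eqref{e:indexRI}. Reducing modulo $2$ gives $\abs{x}_{\Mas} \equiv m_{\O}(q)$ since $\mu_{Q} \in 2\Z$ for orientable $Q$. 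The hard part will be the model computation: verifying the splitting of $d\vp_{H}^{t}$ into Jacobi and reparametrization pieces, matching the symplectic crossing form with the Morse-theoretic index form, and keeping all Maslov, Morse, and Chern sign conventions consistent throughout.
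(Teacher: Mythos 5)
Your proposal is correct in outline but takes a genuinely different route from the paper. Where you split the argument into a model computation for a reference capping $v_{0}$ supported in $\cN'$ followed by a change-of-capping step, the paper instead proves the relation $\mu(x,v) = \mu_{\Int}(x) + \mu_{Q}([v])$ directly, by decomposing the path of Lagrangians $\Lambda_{(x,v)}(t)$ from \eqref{e:maslovindexformula} as a concatenation bridged by the vertical tangent bundle and then using additivity and naturality of the Robbin--Salamon Maslov index; the Morse-index input is then supplied by citing Duistermaat's theorem (Proposition~\ref{p:m=m}) rather than re-deriving it. The hard kernel you identified --- splitting $d\vp_{H}^{t}$ into a Jacobi piece and a vertical-preserving reparametrization shear, matching the crossing form with the Morse index form, and chasing the $n/2$ endpoint contribution against non-degeneracy at $t=1$ --- is precisely the content of Duistermaat's theorem, so you are proposing to re-prove something the paper treats as a black box. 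That is the main risk in your write-up: the sign/normalization bookkeeping there is delicate (note the paper even flags a sign discrepancy with \cite{RS95} coming from the orientation of $d\l_{Q}$), and your claim that the reparametrization shear ``contributes nothing to the Maslov index'' is true but requires a homotopy-rel-endpoints argument that you should make explicit. Two smaller points: first, the clean reason $[v_{0}] = 0$ is that $\pi_{2}(\cN', Q) = 0$ because $\cN'$ retracts onto $Q$, not that disks in $\cN'$ have vanishing Maslov index (the latter is a consequence, not the cause); second, in the change-of-capping step you need to verify that the sphere $A$ obtained from $v \# \bar{v}_{0}$ via simple-connectedness of $L$ maps to $[v]-[v_{0}]$ under $\pi_{2}(M) \to \pi_{2}(M,Q)$ so that both Maslov differences equal $2c_{1}(A)$ with consistent signs --- this is correct but should be spelled out. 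The paper's decomposition avoids both of these transport steps and is shorter, at the cost of requiring a good choice of trivializations $\Phi_{(x,v)}$ and $\Psi_{x}$ and the observation that the concatenation in \eqref{e:indexHo} is homotopic rel endpoints to the relevant piece of $\Lambda_{(x,v)}$.
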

Here $\abs{(x,v)}_{\Mas}$ is the Maslov index of the chord, $m_{\O}(x)$ is the Morse index of the underlying
geodesic path in $Q$, and $\mu_{Q}([v])$ is the Maslov index of the element $[v] \in \pi_{2}(M, Q)$. 
The definitions of the various indices are recalled below and $\abs{x}_{\Mas} := \abs{(x,v)}_{\Mas} \in \Z/2$ gives the $\Z/2$ grading to $CF^{*}(L;H_{Q})$.
Proposition~\ref{p:indexR} specializes to the following corollary in the setting of Section~\ref{s:cHcN}.
\begin{cor}\label{c:indexR}
	For a Hamiltonian $H \in \cH_{\cN,g}^{Q}$, if $x \in \cC_{H}(L)$ is a non-degenerate near loop chord
	with corresponding geodesic $q$ in $(Q,g)$, then $\abs{x}_{\Mas} = m_{\O}(q)$ in $\Z/2$.
\end{cor}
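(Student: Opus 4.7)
The plan is to verify that a near loop chord of $H \in \cH_{\cN\!,g}^{Q}$ satisfies the hypotheses of Proposition~\ref{p:indexR} on a suitable sub-Weinstein neighborhood, apply that proposition to obtain the $\Z$-valued identity, and then reduce modulo $2$ using orientability of $Q$.

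First I would arrange the shape of the Hamiltonian. By Definition~\ref{d:cHcN}, inside $\cN$ one has $H(q,p) = f_{H}(\abs{p}_{g})$ with $f_{H}'(0) = 0$, $f_{H}''(0) > 0$, $f_{H}'' \geq 0$ on $[0, i_{H}]$, and $f_{H}''(r) \neq 0$ whenever $f_{H}'(r) \neq 0$ and $r \neq i_{H}$. Together these force $f_{H}'(r) > 0$ and $f_{H}''(r) > 0$ for every $r \in (0, i_{H})$. A near chord $x$ lies on a single level set $\{\abs{p}_{g} = r_{0}\}$ with $r_{0} \in (0, i_{H})$, so I may choose $c' \in (r_{0}, i_{H})$ and set $\cN' := \Psi(\{\abs{p}_{g} < c'\}) \subset \cN$. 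On $\cN'$ the Hamiltonian $H$ has exactly the form prescribed by \eqref{e:geoH}, and $x \subset \cN'$ automatically.

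Next I would match the Lagrangian data to Proposition~\ref{p:indexR}. The auxiliary Lagrangian $L$ from Lemma~\ref{l:L} is diffeomorphic to $\R^{n}$, hence simply connected. By \eqref{e:NcL}, $L \cap \cN$ is the disjoint union $\bigcup_{j} \Psi(T^{*}_{q_{j}} Q)$; writing $q_{i}$ for the common endpoint $q(0) = q(1)$ of the loop chord, the component of $L \cap \cN'$ meeting $x$ is the truncated cotangent fiber $\Psi(\{(q_{i}, p) : \abs{p}_{g} < c'\})$, which is precisely the form required in the setup preceding Proposition~\ref{p:indexR} with the role of that proposition's ``$q_{0}$'' played by our $q_{i}$. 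Contractibility of $x$ in $\pi_{1}(M, L)$ is built into the generating set $\cC_{H}(L) \subset \cC_{H}^{*}(L)$, and non-degeneracy of $x$ is the assumption, so $x$ admits a capping disk $v$ and determines a class $[v] \in \pi_{2}(M, Q)$.

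Applying Proposition~\ref{p:indexR} then yields
\[
\abs{(x,v)}_{\Mas} = -m_{\O}(q) - \mu_{Q}([v]).
\]
To finish, I would invoke orientability: when $Q$ is oriented, the Maslov homomorphism $\mu_{Q} \colon \pi_{2}(M, Q) \to \Z$ is even-valued, so $\mu_{Q}([v]) \equiv 0 \pmod{2}$. Reducing the displayed identity modulo $2$ gives $\abs{x}_{\Mas} \equiv -m_{\O}(q) \equiv m_{\O}(q) \pmod{2}$, which is Corollary~\ref{c:indexR}. I anticipate no serious obstacle; the only mild technicality is pinning down the sub-neighborhood $\cN'$ on which the strict positivity in \eqref{e:geoH} holds, and this is immediate from unpacking Definition~\ref{d:cHcN}.
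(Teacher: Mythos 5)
Your proof is correct and takes essentially the same route as the paper: the authors likewise observe that near chords live where $f_H''>0$, so that $H$ has the form \eqref{e:geoH} on a sub-Weinstein neighborhood containing the chord, and then invoke Proposition~\ref{p:indexR} together with the fact that $\mu_Q$ is even-valued on an orientable Lagrangian. You simply spell out the choice of $\cN' = \Psi(\{\abs{p}_g < c'\})$ with $c'\in(r_0,i_H)$ and the verification that $f_H'>0$, $f_H''>0$ on $(0,i_H)$, which the paper leaves implicit.
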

\begin{proof}
	Recall from Section~\ref{s:cHcN} that all near chords appear in the region of $\cN$ where $H$ has the
	form \eqref{e:geoH}, where in particular $f_H'' > 0$, and hence Proposition~\ref{p:indexR} applies.
\end{proof}
  

While Proposition~\ref{p:indexR} is most likely well-known to experts, we do not
know of a reference so we will give a proof in Section~\ref{ss:proofVR}.
Before we present the proof though, for clarity and the convenience of the reader we will briefly establish our conventions for various Maslov indices.
As our primitive notion, for a path 
$\Lambda: [a,b] \to \cL_{n}$ in the Lagrangian Grassmanian for $(\R^{2n}, dx\wedge dy)$ and a fixed Lagrangian $V \in \cL_{n}$ we will let $\mu_{\Mas}(\Lambda;V)$ be the \textbf{Maslov index}
as defined by Robbin-Salamon in \cite[Section 2]{RS93}
and we will set $V_{0} = 0 \times \R^{n}$.  The normalization for $\mu_{\Mas}$ is set so
$\mu_{\Mas}(\{e^{2\pi i k t}V_{0}\}_{t\in[0,1]}; V_{0}) = 2k$ for $V_{0} = 0 \times \R$ in $(\R^{2}, dx\wedge dy)$ where $k \in \Z$ is an integer.

\subsubsection{The Maslov class of a Lagrangian}\label{ss:maslov}

The \textbf{Maslov class} of a Lagrangian $Q \subset (M, \w)$ is a homomorphism 
$\mu_{Q}: \pi_{2}(M, Q) \to \Z$, which for a smooth map
$u: (\bD^{2}, \d \bD^{2}) \to (M, Q)$ is defined by
$$
	\mu_{Q}(u) := \mu_{\Mas}(\Lambda_{u}; V_{0}).
$$
For $q(t) = u(e^{2\pi i t})$, the loop $\Lambda_{u}: S^{1} \to \cL_{n}$ is defined by
$$
\Lambda_{u}(t) = \Phi_{u}(t)^{-1}(T^{vert}_{q(t)}T^{*}Q)
$$
where $\Phi_{u}: S^{1} \times \R^{2n} \to q^{*}(TT^{*}Q)$ is a symplectic trivialization
and $T^{vert}T^{*}Q \subset TT^{*}Q$ is the vertical tangent bundle.
The Maslov class has the property that $\mu_{Q}(u) \in 2\Z$ if $Q$ is orientable.

\subsubsection{The Maslov index of a contractible chord with a capping disk}\label{ss:maslovcap}
 
For any Lagrangian $L \subset (M^{2n}, \w)$ and Hamiltonian $H: [0,1] \times M \to \R$,
let $x \in \cC_{H}(L)$ be a contractible non-degenerate Hamiltonian chord
and let $v$ be a capping disk of $x$, i.e.\ \eqref{e:LagCap} a smooth map
$$
	v: \bD^{2} \to M \quad\mbox{such that $v(e^{\pi i t}) = x(t)$ and $v(e^{-\pi i t}) \in L$ for $t\in [0,1]$}.
$$
The \textbf{Maslov index} of the pair $(x,v)$ is defined to be 
$$
	\mu(x,v) := \mu_{\Mas}(\Lambda_{(x,v)}; V_{0})
$$
where the path $\Lambda_{(x,v)}: [-1,1] \to \cL_{n}$ is defined by the concatenation
\begin{equation}\label{e:maslovindexformula}
	\Phi_{(x,v)}(e^{i \pi t})\Lambda_{(x,v)}(t) 
	= \{T_{v(e^{\pi i t})}L\}_{t \in [-1, 0]} \#\{d\vp_{H}^{t}(T_{x(0)}L)\}_{t \in [0,1]}
\end{equation}
where $\Phi_{(x,v)}: \bD^{2} \times \R^{2n} \to v^{*}TM$ is a symplectic trivialization
with $\Phi_{(x,v)}(-1)V_{0} = T_{x(1)}L$.  It follows from the homotopy
invariance of the Malsov index that if two capping disks $v$ and $v'$ of $x$ are homotopic
through capping disks of $x$, then the indices $\mu(x,v) = \mu(x, v')$ are equal.

This is the index used to grade Lagrangian Floer cohomology, more precisely if $x$ is a non-degenerate
contractible chord and $v$ is a capping disk define
\begin{equation}\label{e:LFCG}
	\abs{(x,v)}_{\Mas} = -\mu(x,v) + \tfrac{n}{2} \in \Z.
\end{equation}
When $L$ is orientable, this induces a $\Z/2$-grading $\abs{x}_{\Mas}$ on contractible non-degenerate chords
$x \in \cC_{H}(L)$ by
\begin{equation}\label{e:Lgrading}
	\abs{x}_{\Mas} \equiv \abs{(x,v)}_{\Mas}  \pmod{2} \quad \mbox{for any capping disk $v$.}
\end{equation}
The definition of $\abs{x} \in \Z/2$ is well-defined since if $v_{1}$ and $v_{2}$ are capping disks for the same chord $x$, then $\mu(x,v_{1}) - \mu(x, v_{2}) = \mu_{L}(v_{1}\#\bar{v}_{2}) \in 2\Z$
where $v_{1}\#\bar{v}_{2} \in \pi_{2}(M, L)$ is the result of gluing $v_{1}$ and $v_{2}$
along the chord $x$.

\subsubsection{The Maslov and Morse indices of a chord in a cotangent bundle}\label{ss:maslovcot}

Let $x = (q,p) \in \mathcal{\cC}^{*}_{H}(T_{q_{0}}^{*}Q)$ be any chord for a Hamiltonian
$H: [0,1] \times T^{*}Q \to \R$ on $(T^{*}Q, d\l_{Q})$, then the \textbf{internal Maslov index} of $x$ is defined as
$$
	\mu_{\Int}(x) := \mu_{\Mas}(\Lambda_{x}; V_{0}).
$$
Here $\Lambda_{x}: [0,1] \to \cL_{n}$ is the defined by
$$
	\Lambda_{x}(t) := \Psi_{x}(t)^{-1}(d\vp_{H}^{t} T_{x(0)}^{vert}T^{*}Q)
$$
where $\Psi_{x}: [0,1] \times \R^{2n} \to x^{*}(TT^{*}Q) = q^{*}(TQ \oplus T^{*}Q)$
is a symplectic trivialization such that $\Psi_{x}(t)(V_{0}) = T^{vert}_{x(t)}T^{*}Q = T^{*}_{q(t)}Q$.
Such trivializations always exist and $\mu_{\Int}(x)$ is independent of the choice of $\Psi_{x}$
see for instance \cite[Lemmas 1.2 and 1.3]{AS06}. 

Assume that $H: [0,1] \times T^{*}Q \to \R$ has the form \eqref{e:geoH}, then chords
$x \in \mathcal{C}_{H}^{*}(T_{q_{0}}^{*}Q)$ correspond to geodesic paths, namely
critical points of the functional
$$
	\mathcal{E}_{g}(q) = \int_{0}^{1} \abs{\dot{q}(t)}^{2}_{g} dt
$$
on the space $\O_{M}(q_{0}, q_{0})$ of paths in $Q$ with boundary conditions $q(0) = q(1) = q_{0}$.  Associated
to a geodesic path $q$ is its \textbf{Morse index} $m_{\O}(q)$, which is the number of negative
eigenvalues of the Hessian of $\mathcal{E}_{g}$ at $q$ counted with multiplicity or equivalently the number of conjugate points along the geodesic $q$.  See \cite[Part 3]{Mi63} for details.

For non-degenerate chords in cotangent bundles where $H$ has the form \eqref{e:geoH}
Duistermaat \cite{Du76}, see also \cite[Proposition 6.38]{RS95}, showed that
the Morse index and the internal Maslov index are related as follows.
\begin{prop}\label{p:m=m}
	If $x = (q,p) \in \cC_{H}^{*}(T^{*}_{q_{0}}Q)$ in $(T^{*}Q, d\th)$ is a non-degenerate chord
	for a Hamiltonian $H$ with the form \eqref{e:geoH}, then
	$$
		\mu_{\Int}(x) = m_{\O}(q) + \tfrac{n}{2}.
	$$
\end{prop}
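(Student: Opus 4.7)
The plan is to compute $\mu_{\Int}(x) = \mu_{\Mas}(\Lambda_x; V_0)$ by the Robbin--Salamon crossing formula directly, and then to identify the crossings of $\Lambda_x$ with $V_0$ in terms of Jacobi fields along $q$. First I would set up a preferred symplectic trivialization $\Psi_x$ of $x^*T(T^*Q)$ using Levi--Civita parallel transport of a $g$-orthonormal frame along $q$: the Levi--Civita connection provides a splitting $T_{(q,p)}T^*Q = T^{hor}\oplus T^{vert}$, and identifying $T^{vert} = T^*Q \cong TQ$ via $g$ the canonical symplectic form reads $\omega((X_1,Y_1),(X_2,Y_2)) = g(Y_1,X_2) - g(Y_2,X_1)$ on $T_{q(t)}Q \oplus T_{q(t)}Q$. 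By construction $\Psi_x(t)$ sends $V_0 = 0\oplus\R^n$ to $T^{vert}_{x(t)}T^*Q$, so it qualifies to compute the internal Maslov index.

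Next I would reduce to geodesic flow. Since $H = f_H(|p|_g)$ depends only on $|p|_g$, its Hamiltonian flow preserves each sphere bundle $\{|p|_g = r\}$ and, on the restriction, is a strictly monotone time-reparametrization of the cogeodesic flow by the factor $f_H'(r)/r > 0$. Robbin--Salamon's index depends only on the Lagrangian path up to orientation-preserving reparametrization, so I may replace $H$ by $\tfrac12|p|_g^2$ for the Maslov computation. In the parallel trivialization the linearized cogeodesic flow sends the initial datum of a Jacobi field to $(X(t), \nabla_{\dot q}X(t))\in T_{q(t)}Q\oplus T_{q(t)}Q$, where $X$ satisfies the Jacobi equation $\nabla_{\dot q}^2 X + R(X,\dot q)\dot q = 0$; this is the content of the standard computation of the vertical geodesic spray. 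Under this identification $\Lambda_x(t)$ becomes the space of such tangent vectors coming from Jacobi fields with $X(0) = 0$.

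From here the crossings of $\Lambda_x(t)$ with $V_0$ are read off directly: $(X(t),\nabla_{\dot q}X(t)) \in V_0$ if and only if $X(t) = 0$, so $\dim(\Lambda_x(t)\cap V_0)$ equals the multiplicity of $q(0)$ as a conjugate point to $q(t)$ along $q$. The interior crossings $t_0\in(0,1)$ are therefore exactly the interior conjugate times of $q$; non-degeneracy of $x$ forces $\Lambda_x(1)\cap V_0 = 0$, so there is no crossing at $t=1$; and at $t=0$ one has the total crossing $\Lambda_x(0) = V_0$. I would then compute the Robbin--Salamon crossing form $\Gamma_{t_0}(v) = \omega(v,\dot v)$ where $v(t)\in\Lambda_x(t)$ extends $v$. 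For a Jacobi field with $X(t_0) = 0$ the Jacobi equation gives $\nabla_{\dot q}^2 X(t_0) = -R(X,\dot q)\dot q(t_0) = 0$, so in the parallel trivialization $v(t_0) = (0,\nabla_{\dot q}X(t_0))$ and $\dot v(t_0) = (\nabla_{\dot q}X(t_0),0)$, giving
\[
\Gamma_{t_0}(X) = \omega\bigl((0,\nabla_{\dot q}X(t_0)),(\nabla_{\dot q}X(t_0),0)\bigr) = |\nabla_{\dot q}X(t_0)|_g^2,
\]
which is positive definite on $\Lambda_x(t_0)\cap V_0$. Hence each interior crossing contributes precisely the multiplicity of the conjugate point, and summing these using the classical Morse index theorem for fixed-endpoint geodesics (see e.g.\ \cite[Part III]{Mi63}) produces $m_\O(q)$. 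The same formula at the boundary crossing $t=0$ is positive definite of rank $n$ on all of $V_0$, contributing $\tfrac12\cdot n$ as an endpoint. Adding gives $\mu_{\Int}(x) = m_\O(q) + \tfrac{n}{2}$.

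The main obstacle is verifying that the crossing form computation really takes the clean shape above, independently of choices: one must check that the canonical $\omega$ in the parallel trivialization has the stated form (this follows from covariant constancy of $g$ along $q$), that the reparametrization from $f_H(|p|_g)$ to $\tfrac12|p|_g^2$ is orientation-preserving and hence leaves the Maslov index unchanged (immediate from $f_H'>0$), and that the endpoint convention in the Robbin--Salamon index correctly weighs the $t=0$ crossing by $\tfrac12$ while not contributing anything at $t=1$. The last bookkeeping step is the subtle one, because all else equal a sign error in the endpoint convention would falsify the $\tfrac{n}{2}$ shift.
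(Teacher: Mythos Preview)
The paper does not prove this proposition: it is quoted from Duistermaat \cite{Du76} and Robbin--Salamon \cite[Proposition~6.38]{RS95}, with only a remark about sign conventions. Your crossing-form argument is exactly the standard route those references take, and the overall architecture---parallel trivialization, identification of crossings with conjugate points, positive-definite crossing form, Morse index theorem---is correct.

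There is, however, one step that does not hold as written. You argue that because the flow of $H = f_H(|p|_g)$ restricts on each sphere bundle to a reparametrized cogeodesic flow, the Lagrangian path $\Lambda_x^H(t) = d\varphi_H^t(T^{vert})$ is a reparametrization of the corresponding path for $G = \tfrac12|p|_g^2$, and hence has the same Maslov index. But the Maslov index is built from the \emph{linearized} flow, and $d\varphi_H^t$ is not a reparametrization of $d\varphi_G^s$: writing $H = h(G)$ with $h'>0$, one has $\varphi_H^t(m) = \varphi_G^{t\,h'(G(m))}(m)$, so differentiating produces an extra shear term
\[
d\varphi_H^t(v) \;=\; d\varphi_G^{\tau}(v) \;+\; t\,h''(G)\,dG(v)\cdot X_G\bigl(\varphi_G^{\tau}(m)\bigr),\qquad \tau = t\,h'(G),
\]
which is nonzero on the radial part of $T^{vert}_{x(0)}$. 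Thus $\Lambda_x^H(t)$ and $\Lambda_x^G(\tau(t))$ are genuinely different Lagrangian paths.

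The fix is short and fits naturally into your scheme: split $T^*_{q_0}Q = \R\hat p \oplus \hat p^{\perp}$. On $\hat p^{\perp}$ the shear term vanishes (since $dG(v)=0$), so the crossings and crossing forms coincide with those of $G$, and your Jacobi-field analysis applies verbatim to give the interior contribution $m_\Omega(q)$. On the radial direction $\hat p$, a direct computation shows the horizontal component of $d\varphi_H^t(\hat p)$ is $t\,f_H''(r)\,\hat T$, which is nonzero for $t>0$ because $f_H''>0$; hence there are no interior crossings from the radial direction, and at $t=0$ the crossing form on $\hat p$ evaluates to $f_H''(r)>0$, preserving positive-definiteness of the full endpoint form and the $\tfrac{n}{2}$ contribution. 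So the hypothesis $f_H''>0$ is doing real work precisely here, and once you insert this one-variable check the argument is complete.
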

Note that there is a sign discrepancy between \cite[Proposition 6.38]{RS95} and Proposition~\ref{p:m=m} since \cite{RS95} use the symplectic form $-d\l_{Q} = dq\wedge dp$ on $T^{*}Q$.

\subsubsection{Proof of Proposition~\ref{p:indexR}}\label{ss:proofVR}

The proof of Proposition~\ref{p:indexR} reduces to proving
\eqref{e:VR} and this is the direct analogue of an identity for Hamiltonian orbits,
which in a special case was proved by Viterbo \cite[Theorem 3.1]{Vi90a}
and the general case is in \cite[Proposition 4.3]{KS10}.

In the setting of Proposition~\ref{p:indexR} we have a
contractible chord $x = (q,p) \in \cC_{H}(L)$ contained in $\cN'$ whose corresponding geodesic 
$q$ represents a based loop at $q_{0}$.
Any capping disk $v$ of $x$ determines an element $[v] \in \pi_{2}(M, Q)$
because $L$ is simply connected so without loss of generality we can assume the boundary
of $v$ is contained in the Weinstein neighborhood $\cN'$ of $Q$.

\begin{proof}[Proof of Proposition~\ref{p:indexR}]	
	By \eqref{e:LFCG} and Proposition~\ref{p:m=m}, it suffices to prove
	\begin{equation}\label{e:VR}
		\mu(x,v) = \mu_{\Int}(x) + \mu_{Q}([v]) 
	\end{equation}
	since then $\abs{(x,v)}_{\Mas} = -\mu(x,v) + \tfrac{n}{2} = -\mu_{\Int}(x) - \mu_{Q}([v]) + \tfrac{n}{2}
		= -m_{\O}(q) - \mu_{Q}([v])$.
		Furthermore $\mu_{Q}([v]) \in 2\Z$ since $Q$ is orientable, so
		it follows that $\abs{x}_{\Mas} = m_{\O}(q)$ in $\Z/2$.  It remains to prove \eqref{e:VR}.

              The definition of $\Lambda_{(x,v)}(t)$ in (\ref{e:maslovindexformula})
              tells us that $\Lambda_{(x,v)}(t)$ is a concatenation of two paths.
	By multiplying by $\Psi_{x}(t)\Psi_{x}^{-1}(t)$ we can see that
	second path of $\Lambda_{(x,v)}(t)$ is homotopic to the concatenation 
	\begin{equation}\label{e:indexHo}
	\{\Phi_{(x,v)}^{-1}(e^{i\pi t})\Psi_{x}(t)\Psi_{x}^{-1}(0)T_{x(0)}L\}_{t \in [0,1]}\,\, \#\,\,
		\{\Phi_{(x,v)}^{-1}(-1)\Psi_{x}(1)\Psi_{x}^{-1}(t)d\vp_{H}^{t}(T_{x(0)}L)\}_{t\in [0,1]}.
	\end{equation}
              By naturality of the Maslov index in the sense that
              $\mu_{\Mas}(\Lambda; V_{0}) =
              \mu_{\Mas}(A\Lambda; AV_{0})$ for a symplectic matrix $A \in Sp(\R^{2n})$,
              for the second path in \eqref{e:indexHo} we have 
	$$
	\mu_{\Mas}\Big(\Phi_{(x,v)}^{-1}\Psi_{x}(1)\Psi_{x}^{-1}(t)d\vp_{H}^{t}(T_{x(0)}L); V_{0}\Big)
	=
	\mu_{\Mas}\Big(\Psi_{x}^{-1}(t)d\vp_{H}^{t}(T_{x(0)}L); V_{0}\Big) = \mu_{\Int}(x)
	$$
	and the concatenation of the first path in $\Lambda_{(x,v)}(t)$ and the first path 
	in \eqref{e:indexHo} gives
	$$
		\mu_{\Mas}\Big(\{\Phi_{(x,v)}^{-1}(e^{i\pi t})T^{vert}_{v(e^{i\pi t})}T^{*}Q\}_{t \in [-1,1]}; V_{0}\Big)
		= \mu_{Q}([v]).
	$$
	Using the previous Maslov index calculations and the fact that $\mu_{\Mas}$ is additive under concatenation,
	we have
	\begin{align*}
		\mu(x,v) &= 
		\mu_{\Mas}\Big(\{\Phi_{(x,v)}^{-1}(e^{i\pi t})T^{vert}_{v(e^{i\pi t})}T^{*}Q\}_{t \in [-1,1]}, V_{0}\Big)
		+ \mu_{\Mas}\Big(\Psi_{x}^{-1}(t)d\vp_{H}^{t}(T_{x(0)}L); V_{0}\Big)\\
		&= \mu_{Q}([v]) + \mu_{\Int}(x).
	\end{align*}
	using that $\mu_{\Mas}$ is additive under concatenation.
\end{proof}

\subsubsection{Conley-Zehnder index conventions}

Since in the next section we will reference the Conley-Zehnder index, let us take a second
to recall the definition.
Given a symplectic matrix $A \in Sp(2n)$ for $(\R^{2n}, \w_{0} = dx \wedge dy)$ the graph
$gr(A)$ is a Lagrangian subspace in $(\R^{2n} \times \R^{2n}, -\w_{0} \oplus \w_{0})$.
One defines the \textbf{Conley-Zehnder} index of a path $A: [a,b] \to Sp(2n)$
of symplectic matrices to be the Maslov index 
$$\mu_{\CZ}(A) = \mu_{\Mas}(gr(A); \Delta)$$
of the path of Lagrangians $gr(A)$ with respect to the diagonal $\Delta \subset \R^{2n} \times \R^{2n}$.
These conventions are such that $\mu_{\CZ}(\{e^{2\pi i k t}\}_{t \in [0,1]}) = 2k$ for $k \in \Z$ in 
$(\R^{2}, dx\wedge dy)$.



\section{The comparison and energy-capacity inequalities}\label{s:EAILP}

The goal of this section is to prove Theorem~\ref{t:CapIneq}.
We will begin with a brief summary of Hamiltonian Floer cohomology, if only to establish conventions and notations, and then we will give the definition of the Floer--Hofer--Wysocki capacity.

\subsection{The Hamiltonian Floer--Hofer--Wysocki capacity}\label{s:DefFHC}

\subsubsection{Hamiltonian Floer cohomology}

Hamiltonian Floer cohomology on a Liouville manifold $(M^{2n}, d\th)$ \cite{Fl89, FH94, FHS95} is analogous to Lagrangian Floer cohomology in Section~\ref{ss:LFC}
except now one considers $1$-periodic orbits instead of chords.

Given a Hamiltonian $H: S^{1} \times M \to \R$, let
\begin{equation}\label{e:orbits}
        \cO_{H} = \{ x: S^{1} \to M \mid \tfrac{\d}{\d t} x(t) = X_{H_{t}}(x(t)) \quad\mbox{and}\quad
        [x] = 1 \in \pi_{1}(M)\}
\end{equation}
denote the set of contractible Hamiltonian orbits.
An orbit $x \in \cO_{H}$ is non-degenerate if 
$d\vp_{H}^1 : TM_{x(0)} \to TM_{x(0)}$ has no eigenvalue equal to $1$.
A \textbf{capping disk} $v$ of an orbit $x \in \cO_{H}$ is a map
\begin{equation}\label{e:OrbitCap}
        v: \bD^{2} \to M \quad\mbox{such that $v(e^{2\pi i t}) = x(t)$ for $t\in \R / \Z$}
\end{equation}
with which one can build a symplectic trivialization of $x^{*}TM$ 
and turn $d(\vp_{H}^{t})_{x(0)}$ into
a path of symplectic matrices $A_{(x,v)}: [0,1] \to Sp(2n)$ that starts at $\ide$.  
One defines the index of a non-degenerate orbit $x$ with a capping disk $v$ to be
$$
\abs{(x,v)}_{\CZ} = n - \mu_{CZ}(A_{(x,v)}) \in \Z
$$
where $\abs{\cdot}_{\CZ}$ is normalized so that for a $C^{2}$-small Morse function 
$f$ with a critical point $x$ and constant capping disk $v$ we have $\abs{(x,v)}_{\CZ} = \mbox{Morse}_{f}(x)$.
This induces a well-defined $\Z/2$-grading
$$
\abs{x}_{\CZ} := \abs{(x,v)}_{\CZ} \quad\mbox{in $\Z/2$}
$$
since $\abs{(x,v_{1})}_{\CZ} - \abs{(x,v_{2})}_{\CZ} = -2c_{1}(v_{1}\#\overline{v}_{2})$.

\begin{defn}
An admissible Hamiltonian $H \in \cH$ as in Definition~\ref{d:cH} is \textbf{non-degenerate}
if all orbits $x \in \cO_{H}$ with action $\cA_{H}(x) < M_{H}$ are non-degenerate.
\end{defn}
Here the action functional $\cA_{H}: \cO_{H} \to \R$ is given by
\begin{equation}\label{e:HamiltonianActionFunctional}
        \cA_{H}(x) = \int_{0}^{1} H(t, x(t)) dt - \int_{0}^{1} x^{*}\th.
\end{equation}

For non-degenerate orbits $x_{\pm} \in \cO_{H}$
and admissible almost complex structure $J \in \cJ_{\th}(M)$, the moduli space
$\cM(x_{-}, x_{+}; H, J)$
is the set of finite energy solutions $u= u(s,t): \R \times S^{1} \to M$
\begin{equation}\label{e:FE}
	\d_{s}u + J_{t}(u)(\d_{t}u - X_{H_{t}}(u)) = 0
\end{equation}
with asymptotic convergence 
$\lim_{s \to \pm \infty} u(s,\cdot) = x_{\pm}(\cdot)$.
The energy of a solution to \eqref{e:FE} is
\begin{equation}\label{e:EnergyH}
	E(u) := \int_{\R \times S^{1}} \norm{\d_{s}u}^{2}_{J} ds\,dt 
	\quad\mbox{where $\norm{\d_{s}u}^{2}_{J} = d\th(\d_{s}u, J_{t}(u)\d_{s}u)$}
\end{equation}
and there is the standard a priori energy bound
\begin{equation}\label{e:hamiltonianactionbound}
	0 \leq E(u) = \cA_{H}(x_{-}) - \cA_{H}(x_{+})
\end{equation}
for $u \in \cM(x_{-}, x_{+}; H, J)$.
For non-degenerate $H$ and generic admissible $J$ the moduli space $\cM(x_{-}, x_{+}; H, J)$ is a smooth manifold and the dimension near a solution $u \in \cM(x_{-}, x_{+}; H, J)$ is determined by 
\begin{equation}\label{e:dimMod}
\dim_{u} \cM(x_{-}, x_{+}; H, J) = 
\abs{(x_{-},v)}_{\CZ} - \abs{(x_{+},v \# u)}_{\CZ}
\end{equation}
where $v$ is any capping disk for the orbit $x_{-}$.
We will denote by
$\cM_1(x_{-}, x_{+}; H, J)$ the union of the $1$-dimensional connected components of 
$\cM(x_{-}, x_{+}; H, J)$.  Translation in the domain gives an $\R$-action to the moduli space 
$\cM(x_{-}, x_{+}; H, J)$ and $\cM_1(x_{-}, x_{+}; H, J)/\R$ is a compact $0$-dimensional manifold.

The vector space over $\Z/2$ generated by orbits $x \in \cO_{H}$ with action in 
the window $(a,b]$ is denoted by
$$
CF_{(a,b]}^{*}(H) 
$$
and it is $\Z/2$-graded if all the orbits are non-degenerate.
Analogously to the Lagrangian case the $\Z/2$-linear map
\begin{equation}\label{e:HamiltoniandiffL}
	d_{J}: CF^{*}_{(a,b]}(H) \to CF^{*+1}_{(a,b]}(H)	
\end{equation}
given by counting isolated positive gradient trajectories
$$
	d_{J}x = \sum_{y} 
	(\#_{\Z_{2}} \cM_1(y, x; H, J)/\R) \, y
$$
defines a differential, where the sum is over orbits $y \in \cO_{H}$ with action in the window $(a, b]$.
Hamiltonian Floer cohomology 
$$HF^{*}_{(a,b]}(H) = H^{*}(CF^{*}_{(a,b]}(H),d_{J})$$ is the homology
of this chain complex. 

The continuation maps and action window maps for Hamiltonian Floer cohomology are analogous to the Lagrangian case.  In particular, given non-degenerate Hamiltonians $H^{+} \leq H^{-}$ there is a
monotone continuation map
\begin{equation}\label{e:MonoConH}
	\Phi_{H^{+}H^{-}}:  HF^{*}_{(a,b]}(H^{+}) \to HF^{*}_{(a,b]}(H^{-})
\end{equation}
that is independent of the choice of monotone homotopy $(H^{s}, J^{s})$
used to define it.   

\subsubsection{Hamiltonian Floer--Hofer--Wysocki capacity}

\begin{defn}
Let $f: M \to \R$ be an admissible Hamiltonian that is non-degenerate with respect to $M$.
	If the following conditions are satisfied
	\begin{enumerate}
		\item[(i)] every orbit $x \in \cO_{f}$ is a critical point of $f$,
		\item[(ii)]
		the only critical points for $\{f > 0\}$ occur at infinity where $f$ is constant,
		\item[(iii)]
		the regular sublevel set $\{f \leq 0\}$ is a deformation retract of $M$,
		\item[(iv)]
		$f$ is a $C^{2}$-small Morse function on $\{f \leq 0\}$,
	\end{enumerate}
	then we say $f$ is \textbf{adapted} to $M$.
\end{defn}

It follows from \cite[Theorem 7.3]{SZ92} that if $f: M \to \R$ is a Hamiltonian
adapted to $M$ and $f > -a$, then via Morse cohomology one has a chain-level isomorphism 
\begin{equation}\label{e:CLIH}
	H^{*}_{\mbox{\tiny{Morse}}}(M) \cong HF^{*}_{(-a,0]}(f)
\end{equation}
given by mapping critical points $x \in \mbox{Crit}(f)$ with $f(x) < 0$ to the corresponding constant orbit 
$x \in \cO_{f}$.

Just as in the Lagrangian case, for a compact subset $X \subset M$ 
and $a > 0$ we define 
\begin{equation}
	HF^{*}(X, a) := \varinjlim_{H \in \cH^{X}} HF^{*}_{(-a, 0]}(H)
\end{equation}
where monotone continuation maps \eqref{e:MonoConH}
are used for the direct limit over the class of Hamiltonians 
$\cH^{X}$ from \eqref{e:cHW}. 
Similarly there is a natural map
\begin{equation}\label{e:iHFH}
	i_{X}^{a}:  H^{*}(M) \to HF^{*}(X, a)
\end{equation}
given by the isomorphism \eqref{e:CLIH} and 
the inclusion of $HF^{*}_{(-a,0]}(f)$ into the direct limit where $f \in \cH^{X}$ is
adapted to $M$ with $f > -a$.  

We now have the following definition where $\ide_{M} \in H^{0}(M)$ is the fundamental class.
\begin{defn}
	The \textbf{Floer--Hofer--Wysocki capacity} of $X$ is
	\begin{equation}\label{e:cH}
		c^{FHW}(X) = \inf\{ a > 0 : i_{X}^{a}(\ide_{M}) = 0\}
	\end{equation}
	where $c^{FHW}(X) = +\infty$ if 
	$i_{X}^{a}(\ide_{M}) \not= 0$ for all $a > 0$.
\end{defn}

Just like for the Lagrangian case we have the following criterion for when $c^{FHW}(X) < a$,
which follows from the definitions.
\begin{lem}\label{l:CapBoundHam}
	For any finite $a$, the capacity $c^{FHW}(X) \leq a$ if and only if there is
	an $f \in \cH^{X}$ adapted to $M$ and an $H \in \cH^{X}$ so that
	$-a < f \leq H$ and
	$$
		\ide_{M} \in \ker\left(\Phi_{fH} : HF^{*}_{(-a, 0]}(f) \to HF^{*}_{(-a,0]}(H)\right)
	$$
	where $\ide_{M} \in H^{*}(M) \cong HF^{*}_{(-a_{0}, 0]}(f)$ are identified as in \eqref{e:CLIH}.	
\end{lem}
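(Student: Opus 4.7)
The plan is to prove this as an essentially tautological consequence of the definitions: $c^{FHW}(X)$ is defined via the infimum condition \eqref{e:cH}, the map $i_X^a$ factors through the Morse isomorphism \eqref{e:CLIH} followed by inclusion into the direct limit $HF^*(X, a) = \varinjlim_{K \in \cH^X} HF^*_{(-a, 0]}(K)$, and the universal property of direct limits of $\Z/2$-vector spaces over the directed poset $(\cH^X, \leq)$ says that a class dies in the limit precisely when it dies at some finite stage of the system (with structure maps the monotone continuation maps $\Phi_{fK}$).

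For the $(\Leftarrow)$ direction, suppose such $f$ and $H$ exist. Since $f$ is adapted to $M$ and $-a < f$, the isomorphism \eqref{e:CLIH} identifies $\ide_M \in H^*(M)$ with a well-defined class in $HF^*_{(-a,0]}(f)$, and $i_X^a(\ide_M)$ is by construction the image of this class in the direct limit. The hypothesis $\Phi_{fH}(\ide_M) = 0$ in $HF^*_{(-a,0]}(H)$, together with the fact that $H \in \cH^X$ with $f \leq H$ is a stage of the direct system, shows that this class already dies at $H$. Consequently $i_X^a(\ide_M) = 0$ in $HF^*(X,a)$, and \eqref{e:cH} yields $c^{FHW}(X) \leq a$.

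For the $(\Rightarrow)$ direction, first fix any $f \in \cH^X$ adapted to $M$ with $-a < f$; such an $f$ exists by taking a sufficiently $C^2$-small Morse function on a prescribed sublevel set and extending it to be constant at infinity as required by Definition~\ref{d:cH} and the adaptedness conditions. The hypothesis $c^{FHW}(X) \leq a$ gives $i_X^a(\ide_M) = 0$ in the direct limit. Applying the universal property of the directed colimit to the class $[\ide_M] \in HF^*_{(-a,0]}(f)$ produces some $H \in \cH^X$ with $f \leq H$ such that $\Phi_{fH}(\ide_M) = 0 \in HF^*_{(-a,0]}(H)$, which is exactly what is required.

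The proof has no substantive analytic obstacle: all the nontrivial content (well-definedness of the direct system, the Morse isomorphism \eqref{e:CLIH}, and existence of adapted Hamiltonians) has been established earlier in the construction. The only minor point to keep straight is that $c^{FHW}(X) \leq a$ is interpreted through the direct system so that the vanishing already holds at the level $a$, which is consistent with how the capacity is defined as the infimum over an upward-closed set of values with respect to the action-window maps. In effect, the lemma is a restatement of the definition of $c^{FHW}(X)$ via the colimit, and the proof just makes this identification explicit.
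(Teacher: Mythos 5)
Your proof is correct and takes the same route as the paper: the paper presents Lemma~\ref{l:CapBoundHam} (like its Lagrangian analogue Lemma~\ref{l:critForLC}) as following ``directly from the definition of the direct limit,'' and you are simply unwinding that definition, using the universal property of the directed colimit over $(\cH^X,\leq)$ and the Morse identification \eqref{e:CLIH}. One small point you mention but do not fully justify (and which the paper also leaves implicit) is that the set $\{a>0 : i_X^a(\ide_M)=0\}$ is left-closed, which is what makes $c^{FHW}(X)\leq a$ equivalent to $i_X^a(\ide_M)=0$ rather than only to the vanishing for all $a'>a$; this is harmless for the paper's applications (which always work with $a$ strictly larger than the capacity), so the gloss is acceptable here.
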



\subsection{Proof of Theorem~\ref{t:CapIneq}}

We will now present proofs of the various inequalities for the Hamiltonian and Lagrangian
Floer--Hofer--Wysocki capacities given in Theorem~\ref{t:CapIneq}.

\subsubsection{Proving part (i): The comparison inequality via a closed-open map}

Theorem~\ref{t:CapIneq}(i) follows directly from
the existence of a closed-open map
\begin{equation}\label{e:COmap}
	\cC\cO: HF^{*}(X, a) \to HF^{*}(L;X,a)
\end{equation}
such that there is a commutative diagram
\begin{equation}\label{e:COcommute}
	\xymatrix{
	H^{*}(M) \ar[r]^{i^{*}} \ar[d]_{i_{X}^{a}} & H^{*}(L) \ar[d]^{i_{L;X}^{a}}\\
	HF^{*}(X,a) \ar[r]^{\cC\cO} &  HF^{*}(L;X,a)
	}
\end{equation}
where $i^{*}: H^{*}(M) \to H^{*}(L)$ is the standard map on cohomology.

\begin{proof}[Proof of Theorem~\ref{t:CapIneq}(i)]
	Since $i^{*}(\ide_{M}) = \ide_{L}$, it follows from \eqref{e:COcommute}
	that $i_{X}^{a}(\ide_{M}) = 0$ implies $i_{L;X}^{a}(\ide_{L}) = 0$.
	By the definitions of the capacities, this proves Theorem~\ref{t:CapIneq}(i).	
\end{proof}

Since closed-open maps have appeared in the literature in \cite{AS06a, AS10, Al08} and many times since,
we will just briefly recall the construction.  
For a given Hamiltonian $H \in \cH^{X}$ that is non-degenerate with respect to $M$ and $L$,
there is a map
\begin{equation}\label{e:CODef}
	\cC\cO: HF^{*}_{(-a,0]}(H) \to HF^{*}_{(-a,0]}(L;H)
\end{equation}
which is Albers' map $\tau$ in \cite[Section 5]{Al08}.
If $y \in \cO_{H}$ is an orbit with action in $(-a, 0]$, then on the chain level \eqref{e:CODef} is defined by
$$
	\cC\cO(y) = \sum_{x} \left(\#_{\Z_{2}} \cM_0^{\cC\cO}(x, y; L, H, J)\right) \, x
$$
where the sum is taken
over chords $x \in \cC_{H}(L)$ with action in $(-a, 0]$.
The moduli space $\cM_0^{\cC\cO}(x, y; L, H, J)$ is the zero dimensional component
of the space of finite energy solutions to $u = u(s,t) : \Si \to M$
\begin{equation}\label{e:COmapE}
	\begin{cases}
	u(\d\Si) \subset L\\
	\d_{s}u + J_{t}(u)(\d_{t}u - X_{H_{t}}(u)) = 0\\
	u(-\infty, \cdot) = x(\cdot),\, u(+\infty, \cdot) = y(\cdot)
	\end{cases}
\end{equation}
where $J = \{J_{t}\}_{t \in S^{1}} \in \cJ_{\th}(M)$ is an admissible almost complex structure
and
$$
	\Si = \R \times [0,1] / \sim \quad\mbox{where}\quad (s,0) \sim (s,1) \mbox{ for $s \geq 0$}
$$
with boundary $\d\Si = \{(s,t): s \leq 0,\, t=0,1\}$.
The energy of a solution to \eqref{e:COmap} is given by
\begin{equation}\label{e:EnergyCO}
	E(u) := \int_{\Si} \norm{\d_{s}u}^{2}_{J} ds\,dt 
	\quad\mbox{where $\norm{\d_{s}u}^{2}_{J} = d\th(\d_{s}u, J_{t}(u)\d_{s}u)$}
\end{equation}
and again we have the a priori energy bound
\begin{equation}\label{e:EBCO}
	0 \leq E(u) = \cA_{H,L}(x) - \cA_{H}(y)
\end{equation}
which is why the map $\cC\cO$ preserves the action filtration.

Standard proofs show that the closed-open map is natural with respect to monotone continuation maps 
in Hamiltonian Floer cohomology \eqref{e:MonoConH} and in Lagrangian Floer cohomology \eqref{e:MonoCon},
and hence the map in \eqref{e:CODef} induces the map \eqref{e:COmap} on the direct limits.
While Albers works in the case where $M$ is closed and $L$ is monotone, his proof generalizes to this setting
since there are no holomorphic disks on $L$ or holomorphic spheres in $M$ and
Lemma~\ref{l:MaxPrin} provides the needed maximum principle.  The commutativity
of \eqref{e:COcommute} now follows from \cite[Theorem 1.5]{Al08}. 
Note the formalism in \cite{Al05, Al08} was corrected in \cite{Al10}, but these modifications affect neither our use of 
the closed-open map nor the commutativity of \eqref{e:COcommute}.

\subsubsection{Proving part (ii): The energy-capacity inequalities}

\begin{proof}[Proof of Theorem~\ref{t:CapIneq}(ii): Hamiltonian case]
If $e_{0} > e(X; M)$, then we can pick a Hamiltonian
$G$ with $\norm{G} < e_{0}$ such that $\vp_{G}^{1}$ displaces $X$.
Without loss of generality we can assume that $G$ is non-degenerate, $G \leq 0$,
and $\sup_{S^{1} \times M} \abs{G(t,x)} < e_{0}$.
Let $\cN$ be a neighborhood of $X$ so that $\vp_{G}^{1}$ displaces $\cN$ as well.

For any $\eps > 0$, pick a Hamiltonian $H > -\eps$ in $\cH^{X}$ that is non-degenerate
and equal to the constant $M_{H}$ outside $S^{1} \times \cN$ where $M_{H} > e_{0}$.  Pick $f \in \cH^{X}$ to be
a Hamiltonian adapted to $M$ such that $-\e < f \leq H$ and $M_{f} > e_{0}$.  Our choice of $f$
gives 
$$HF^{*}_{(-\e, 0]}(f) \cong HF^{*}_{(-\e - e_{0},0]}(f-e_{0}) \cong H^{*}(M)$$
via \eqref{e:CLIH}.  Refer to Figure~\ref{f:ECIGraph} for a schematic graph of these Hamiltonians.

\begin{figure}[h]
  \centering
  \def\svgwidth{425pt}
  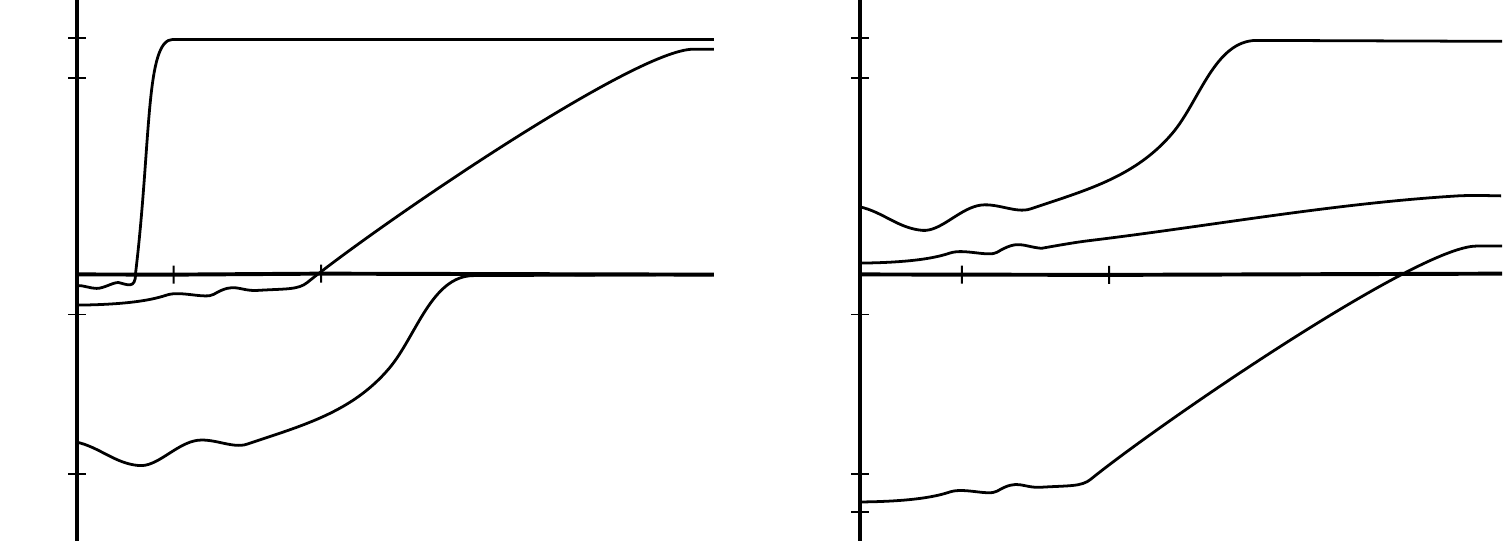
  \caption{The various Hamiltonians involved in the proof of Theorem~\ref{t:CapIneq}(ii).  Outside
  of $\overline{M}$ the Hamiltonian
  $f$ can be taken to be radial $f = f(r)$ on the convex end \eqref{e:end} with $\abs{f'(r)}$ smaller
  than the minimal period of a Reeb orbit on $\d\overline{M}$.}
  \label{f:ECIGraph}
\end{figure}

For $s \in [0,1]$ let $K^{s} = (1-s) H + s M_{H}$ and consider the family of admissible Hamiltonians
$$
(K^{s}\# G)_{t}: = K^{s}_{t} + G_{t}\circ(\vp_{K^{s}}^{t})^{-1}
$$
where $K^{s}\# G$ generates the Hamiltonian isotopy $\{\vp_{K^{s}}^{t}\vp_{G}^{t}\}_{t}$.
Since $\vp_{G}^{1}$ displaces $\cN$, while $\vp_{K^{s}}^{t}(\cN) = \cN$ and outside of 
$\cN$ one has $\vp_{K^{s}}^{t} = \id$, it follows that the fixed points of $\vp_{K^{s}}^{1}\vp_{G}^{1}$
and $\vp_{G}^{1}$ coincide.  In particular the fixed points of $\vp_{K^{s}}^{1}\vp_{G}^{1}$
are $s$-independent and under the correspondence between fixed points and
orbits in $\cC_{K^{s}\# G}$ it is known \cite[Chapter 5.5]{HZ94} that the
actions $\cA_{K^{s}\#G}$ are also $s$-independent.  It follows that the (non-monotone)
homotopy $K^{s}\#G$ induces an isomorphism
\begin{equation}\label{e:isoSpec}
	HF^{*}_{(-a, 0]}(H\# G) \stackrel{\cong}{\longrightarrow} HF^{*}_{(-a, 0]}(M_{H} + G)
\end{equation}
see for instance \cite[Section 3.2.3]{Gi07} or \cite{BPS03, FH94, Vi99}.
Since $f-e_{0} \leq K^{s}\#G$ for all $s$, 
the isomorphism \eqref{e:isoSpec}
actually fits into the commutative diagram
\begin{equation}\label{e:GCD}
	\xymatrix{
	HF^{*}_{(-e_{0} - \e, 0]}(f-e_{0}) \ar[dr]^{\Phi_{1}} \ar[d]_{\Phi_{0}}\\
	HF^{*}_{(-e_{0} - \e, 0]}(H\# G) \ar[r]^{\cong} & HF^{*}_{(-e_{0} - \e, 0]}(M_{H} + G)}
\end{equation}
where $\Phi_{0}$ and $\Phi_{1}$ are monotone continuation maps \cite[Section 2.2.2]{Gi10}.

Since $M_{H} + G > 0$, we can factor the monotone continuation map
$\Phi_{1}$ into two monotone continuation maps
\begin{equation}\label{e:Factor}
	HF^{*}_{(-e_{0} - \e, 0]}(f-e_{0}) \to HF^{*}_{(-e_{0}-\e, 0]}(h) \to HF^{*}_{(-e_{0}-\e, 0]}(M_{H} + G)
\end{equation}
where $h: M \to \R$ is an admissible Hamiltonian whose only $1$-periodic orbits are critical points
and is such that $0 < h \leq M_{H} + G$.
Since these conditions on $h$ imply that $HF^{*}_{(-e_{0}-\e, 0]}(h) = 0$, we know that
$\Phi_{1} = 0$ and therefore by \eqref{e:GCD} that $\Phi_{0} = 0$.  We 
also have the commutative diagram of monotone continuation maps
$$
	\xymatrix{
	HF^{*}_{(-e_{0} - \e, 0]}(f) \ar[d]^{\Phi_{fH}} & HF^{*}_{(-e_{0}-\e, 0]}(f - e_{0}) \ar[l]_{\cong} 
	\ar[d]^{\Phi_{0} = 0}\\
	HF^{*}_{(-e_{0} - \e, 0]}(H) & HF^{*}_{(-e_{0} - \e, 0]}(H \# G) \ar[l]
	}
$$
Since the top map is an isomorphism we have that the continuation map
$$\Phi_{fH}: HF^{*}_{(-e_{0} - \e, 0]}(f) \to HF^{*}_{(-e_{0} - \e, 0]}(H)$$
is zero. Therefore by Lemma~\ref{l:CapBoundHam}
we have $c^{FHW}(X) \leq e_{0} + \e$
and letting $e_{0}$ tend to $e(X; M)$ and $\e$ tend to $0$ gives the result.
\end{proof}

The proof of Theorem~\ref{t:CapIneq}(ii) in the Lagrangian case is analogous.  The only slight difference
is one takes a Hamiltonian $G$ that displaces $L$ from $X$ so that $G \leq 0$ and $\sup_{S^{1}\times L} \abs{G(t,x)} < e_{0}$.  Then at the part corresponding to \eqref{e:Factor}, one factors
\begin{equation}
	HF^{*}_{(-e_{0} - \e, 0]}(L;f-e_{0}) \to HF^{*}_{(-e_{0}-\e, 0]}(L;h) \to HF^{*}_{(-e_{0}-\e, 0]}(L;M_{H} + G)
\end{equation}
where $h: M \to \R$ is admissible, $h|_{L}$ is positive, and all chords $\cC_{h}(L)$ correspond to critical points of
$h|_{L}$.  This forces all chords 
$x \in \cC_{h}(L)$ to have positive action $\cA_{h,L}(x) > 0$ and hence $HF^{*}_{(-e_{0}-\e, 0]}(L;h) = 0$.



\nomenclature[1a]{$\cA_H$}{Hamiltonian action, see Equation~\eqref{e:HamiltonianActionFunctional}}
\nomenclature[1b]{$\cA_{H,L}$}{Lagrangian action, see Equation~\eqref{e:LagAct}}
\nomenclature[1c]{$\cA_{H,L}^{T^*Q}$}{Lagrangian cotangent bundle action, see Equation~\eqref{e:CTBA}}
\nomenclature[1d]{$C_H$}{Contractible chords, see Section~\ref{sss:chords}}
\nomenclature[1e]{$C_H^{*}$}{All chords, see Section~\ref{sss:chords}}
\nomenclature[1f]{$\cH$}{Admissible Hamiltonians, see Definition~\ref{d:cH}}
\nomenclature[1g]{$\cH^X$}{Admissible Hamiltonians for $X$, see Equation~\eqref{e:cHW}}
\nomenclature[1h]{$\cH^Q_{\cN,g}$}{Admissible Hamiltonians for $Q$ localized in $\cN$, see Definition~\ref{d:cHcN}}
\nomenclature[1i]{$f_H,\epsilon_H,\rho_H$}{Terms used to define elements in $\cH^Q_{\cN,g}$, see Definition~\ref{d:cHcN}}
\nomenclature[1j]{$\cJ_\th$}{Admissible almost complex structures, see Definition~\ref{d:cJ}}
\nomenclature[1k]{$\cJ_{\iota}$}{$J \in \cJ_\th$ that are standard on the image of $\iota$,
see Definition~\ref{d:JinBall}}
\nomenclature[1l]{$\cJ_{\cyl, g}(\cN)$}{$J \in \cJ_\th$ of contact type near $\partial\cN$, see Definition~\ref{d:CylJinN}}
\nomenclature[1m]{near/far chord}{Types of chords, see Section~\ref{s:cHcN}}
\nomenclature[1n]{path/loop chord}{Types of chords, see Section~\ref{s:cHcN}}
\nomenclature[1o]{$x_n$, $x_f$}{The near and far chords corresponding to a geodesic, see Section~\ref{s:cHcN}}

\printnomenclature[1.5in]

%



\bigskip

\noindent
\begin{center}
\begin{tabular}{ll}
Matthew Strom Borman & Mark McLean\\
Stanford University &  University of Aberdeen \\
borman@stanford.edu & mmclean@abdn.ac.uk
\end{tabular}
\end{center}


\end{document}